\newcommand{\ra}[1]{\renewcommand{\arraystretch}{#1}}
\numberwithin{equation}{section}
\theoremstyle{plain}
\newtheorem{theorem}{Theorem}[section]
\newtheorem{corollary}[theorem]{Corollary}
\newtheorem{lemma}[theorem]{Lemma}
\newtheorem{proposition}[theorem]{Proposition}
\newtheorem*{theorem*}{Theorem}
\theoremstyle{definition}
\newtheorem{definition}[theorem]{Definition}
\newtheorem{example}[theorem]{Example}
\newtheorem{algorithm}[theorem]{Algorithm}
\theoremstyle{remark}
\newtheorem{remark}[theorem]{Remark}
\newcommand{\R}{\mathbb{R}}
\newcommand{\Q}{\mathbb{Q}}
\newcommand{\Z}{\mathbb{Z}}
\newcommand{\B}{\mathbb{B}}
\newcommand{\C}{\mathbb{C}}
\newcommand{\h}{\mathbb{H}}
\renewcommand{\H}{\mathbb{H}}
\renewcommand{\P}{\mathbb{P}}
\newcommand{\zxz}[4]{\begin{pmatrix} #1 & #2 \\ #3 & #4 \end{pmatrix}}
\newcommand{\leg}[2]{\left( \frac{#1}{#2} \right)}
\newcommand{\kzxz}[4]{\left(\begin{smallmatrix} #1 & #2 \\ #3 & #4\end{smallmatrix}\right) }
\newcommand{\kabcd}{\kzxz{a}{b}{c}{d}}
\newcommand{\calA}{\mathcal{A}}
\newcommand{\calB}{\mathcal{B}}
\newcommand{\calC}{\mathcal{C}}
\newcommand{\frake}{\mathfrak e}
\newcommand{\eps}{\varepsilon}
\newcommand{\epsodd}{\varepsilon_{\mathrm{odd}}}
\newcommand{\bs}{\backslash}
\newcommand{\Span}{\operatorname{span}}
\newcommand{\Sl}{\operatorname{SL}}
\newcommand{\SL}{\operatorname{SL}}
\newcommand{\Mp}{\operatorname{Mp}}
\newcommand{\Orth}{\operatorname{O}}
\newcommand{\Aut}{\operatorname{Aut}}
\newcommand{\sig}{\operatorname{sig}}
\newcommand{\Pic}{\operatorname{Pic}}
\newcommand{\abs}[1]{{\lvert{#1}\rvert}}
\newcommand{\sage}{\texttt{sage}}
\begin{document}

\title[Lattices with many Borcherds products]{Lattices with many Borcherds products}

\author{Jan Hendrik Bruinier}
\email[J.~H.~Bruinier]{bruinier@mathematik.tu-darmstadt.de}
\author{Stephan Ehlen}
\email[S.~Ehlen]{ehlen@mathematik.tu-darmstadt.de}
\address{Fachbereich Mathematik, Technische Universit\"{a}t Darmstadt, Schlossgartenstra\ss{}e 7, D--64289 Darmstadt, Germany}
\author{Eberhard Freitag}
\email[E.~Freitag]{freitag@mathi.uni-heidelberg.de}
\address{Universit\"{a}t Heidelberg, Mathematisches Institut, Im Neuenheimer Feld 288, D--69120 Heidelberg, Germany}

\subjclass[2010]{11F12, 11E20, 11--04, 14C22}

\thanks{The first and the second author are partially supported by DFG grant BR-2163/4-1.}

\date{\today}

\begin{abstract}
We prove that there are only finitely many isometry classes of even lattices $L$ of signature $(2,n)$
for which the space of cusp forms of weight $1+n/2$ for the Weil representation of the discriminant group
of $L$ is trivial. We compute the list of these lattices.
They have the property that every Heegner divisor for the orthogonal group of $L$ can be realized as
the divisor of a Borcherds product.
We obtain similar classification results in greater generality for finite quadratic modules.
\end{abstract}

\maketitle
\section{Introduction}
\label{sect:intro}

Let $L$ be an even lattice of signature $(2,n)$ and write $\Orth(L)$ for its orthogonal group.
In his celebrated paper \cite{Bo1} R.~Borcherds constructed
a map from vector valued weakly holomorphic elliptic modular forms of weight $1-n/2$ to meromorphic modular forms for $\Orth(L)$
whose zeros and poles are supported on Heegner divisors. Since modular forms arising in this way have particular infinite product expansions,
they are often called {\em Borcherds products}. They play important roles in different areas such as
Algebraic and Arithmetic Geometry, Number Theory, Lie Theory, Combinatorics, and Mathematical Physics.

By Serre duality, the obstructions for the existence of weakly holomorphic modular forms with prescribed principal part
at the cusp at $\infty$ are given by vector valued cusp forms of dual weight $1+n/2$ transforming with the
Weil representation associated with the discriminant group of $L$  \cite{Bo2}.
In particular, if there are no non-trivial cusp forms of this type,
then there are no obstructions, and every Heegner divisor is the divisor of a Borcherds product.
A lattice with this property is called \emph{simple}.

It was conjectured by the third author that there exist only finitely
many isomorphism classes of such simple lattices. Under the
assumptions that $n\geq 3$ and that the Witt rank of $L$
(i.e.~the dimension of a maximal totally isotropic subspace of $L \otimes_\Z \Q$) is $2$,
it was proved by M.~Bundschuh
that there is an upper bound on the determinant of a simple lattice
\cite{Bu}. Unfortunately, this bound is very large and therefore not
feasible to obtain any classification results.
The argument of \cite{Bu} is based on volume estimates for Heegner
divisors and the singular weight bound for holomorphic modular forms
for  $\Orth(L)$.

The purpose of the present paper is twofold. First, we show that for
any $n \geq 1$ (without any additional assumption on the Witt rank)
there exist only finitely many isomorphism classes of even simple
lattices of signature $(2,n)$, see Theorem~\ref{fundest} and
Corollary~\ref{cor:fin}. Second, we develop an efficient algorithm to
determine all of these lattices. It turns out that there are exactly
$362$ isomorphism classes. Table~\ref{tab:survey} shows how many
of those occur in the different signatures. The corresponding
genus symbols (see Section \ref{sect:2}) of these lattices are listed in Tables \ref{tab:simple1},
\ref{tab:simple2}\footnote{Tables with global realizations can be obtained from \cite{CodeRepo}.}.

\begin{table}[h]
  \centering
    \caption{\label{tab:survey}
    Number of simple lattices of signature $(2,n)$.}
  \begin{tabular}{l ccccccccccccccc}
    \toprule
    $n$    & $1$   & $2$  & $3$  & $4$ & $5$ & $6$ & $7$ & $8$ & $9$ & $10$ & $11\leq n\leq 17$ & $18$ & $19\leq n\leq 25$ & $26$ & $n> 26$ \\\midrule
    \# & $256$ & $67$ & $15$ & $5$ & $3$ & $4$ & $2$ & $3$ & $3$ & $2$  & $0$               & $1$ & $0$                & $1$  & $0$\\\bottomrule
  \end{tabular}\vspace{3mm}
\end{table}

It is also interesting to record the Witt ranks of these lattices.
For $n=1$, we have $26$ anisotropic lattices.
The corresponding modular varieties are Shimura curves
while the remaining $230$ modular varieties for $n=1$ are modular curves.
For $n=2$, there are $24$ of Witt rank $1$ and $43$ of Witt rank $2$
but no anisotropic lattices. Finally, if $n \geq 3$, all simple lattices have Witt rank $2$.

Along the way, we obtain several results on modular forms associated
with finite quadratic modules which are of independent interest and which we
now briefly describe.  A finite quadratic module is a pair consisting of a
finite abelian group $A$ together with a $\Q/\Z$-valued non-degenerate
quadratic form $Q$ on $A$, see \cite{Ni}, \cite{Sk2}. Important examples of
finite quadratic modules are obtained from lattices.  If $L$ is an
even lattice with dual lattice $L'$, then the quadratic form on $L$
induces a $\Q/\Z$-valued quadratic form on the discriminant group $L'/L$.

Recall that there is a Weil representation $\rho_A$ of the the
metaplectic extension $\Mp_2(\Z)$ of $\Sl_2(\Z)$ on the group ring
$\C[A]$ of a finite quadratic module $A$, see Section \ref{sect:3}.
If $k\in \frac{1}{2}\Z$, we write
$S_{k,A}$ for the space of cusp forms of weight $k$ and representation
$\rho_A$ for the group $\Mp_2(\Z)$.
We say that a finite quadratic module $A$ is $k$-simple if $S_{k,A} = \{0\}$.
With this terminology, an even lattice $L$ is simple if and only if $L'/L$
is $(1+n/2)$-simple.

The dimension of the space $S_{k,A}$ can be computed by means of the
Riemann-Roch theorem. Therefore a straightforward approach to showing
that there are nontrivial cusp forms consists in finding lower bounds
for the dimension of $S_{k,A}$.
Unfortunately, the dimension formula \eqref{dim1} involves rather
complicated invariants of $\rho_A$ at elliptic and parabolic elements,
and it is a non-trivial task to obtain sufficiently strong bounds.
In the present paper we resolve this problem.
For instance, we obtain the following result
(see Theorem~\ref{fundest} and Corollary~\ref{cor:asy}).

\begin{theorem*}
For every $\eps>0$, there is a $C_{\eps} > 0$, such that
\[
  \left\lvert \dim(S_{k,A}) -\dim(M_{2-k,A(-1)}) -|A/\{\pm 1\}|\cdot\frac{k-1}{12} \right\rvert \leq C_\eps\cdot |A/\{\pm 1\}|\cdot N_A^{\eps-\frac{1}{2}}
\]
for every finite quadratic module $A$ and every weight $k\geq \tfrac{3}{2}$ with $2k\equiv -\sig(A)\pmod{4}$.
Here $N_A$ is the level of $A$, and $A(-1)$ denotes
the abelian group $A$ equipped with the quadratic form $-Q$.
\end{theorem*}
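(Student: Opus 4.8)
The plan is to start from the exact dimension formula \eqref{dim1}. Applied to the modular orbifold $\Mp_2(\Z)\bs\H$, it expresses $\dim(S_{k,A})-\dim(M_{2-k,A(-1)})$ as a ``volume'' main term plus local contributions at the two elliptic points (the classes of $S$ and $ST$) and at the cusp (the class of $T$). The volume term is precisely $\abs{A/\{\pm1\}}\cdot\frac{k-1}{12}$: the parity condition $2k\equiv-\sig(A)\pmod 4$ forces cusp forms to lie in the subspace of $\C[A]$ on which the center acts compatibly, namely the $(+1)$-eigenspace of $\gamma\mapsto-\gamma$, whose dimension is the number of orbits $\abs{A/\{\pm1\}}=\tfrac12(\abs{A}+\abs{A[2]})$. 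It therefore remains to bound the elliptic and parabolic contributions by $C_\eps\,\abs{A/\{\pm1\}}\,N_A^{\eps-\frac12}$. Throughout I use that the level and the exponent of $A$ are comparable, so $N_A\ll\abs{A}$, and I abbreviate the quadratic Gauss sums by $G(m)=\sum_{\gamma\in A}\ee^{2\pi\imag\,mQ(\gamma)}$.

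The elliptic terms are the easy part. Each is a fixed root of unity, depending only on $k$ and $\sig(A)$, times a trace such as $\tr\rho_A(S)$ or $\tr\rho_A(ST)$. Since $\rho_A(S)$ is $\abs{A}^{-1/2}$ times the finite Fourier transform and $(\gamma,\gamma)=2Q(\gamma)$, its diagonal gives $\tr\rho_A(S)=\ee^{-\pi\imag\sig(A)/4}\,\abs{A}^{-1/2}\,G(-2)$, and $\tr\rho_A(ST)$ is a similar multiple of another value of $G$. The elementary bound $\abs{G(m)}\le(\abs{A}\,\abs{A[m]})^{1/2}$, obtained by expanding $\abs{G(m)}^2$ and using non-degeneracy of the bilinear form, then yields $\abs{\tr\rho_A(S)}\le\abs{A[2]}^{1/2}\le\abs{A}^{1/2}$, and likewise for $ST$. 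As $N_A\ll\abs{A}$ one checks $\abs{A}^{1/2}\le C_\eps\,\abs{A}\,N_A^{\eps-\frac12}$, so the elliptic contributions are absorbed into the error.

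The parabolic term is the genuine obstacle. After collecting the elliptic pieces, it can be written as $-\sum_{\gamma\in A}\tilde B_1\!\big(Q(\gamma)+c_k\big)$ (up to a factor accounting for the folding by $\pm1$ and a lower-order $A[2]$-correction), where $\tilde B_1(x)=\{x\}-\tfrac12$ is the periodic sawtooth and $c_k$ depends only on $k$ and $\sig(A)$; crucially, $\tilde B_1$ is centered, so no term of size $\abs{A}$ survives. Since $Q$ takes values in $\tfrac1{N_A}\Z/\Z$, I pass to a finite Fourier expansion on $\Z/N_A\Z$,
\[
\sum_{\gamma\in A}\tilde B_1\!\big(Q(\gamma)+c_k\big)=\frac1{N_A}\sum_{m\bmod N_A}G(m)\,\widehat B(m),
\]
where $\widehat B(m)=\sum_{j\bmod N_A}\tilde B_1(j/N_A+c_k)\,\ee^{-2\pi\imag\, mj/N_A}$ satisfies $\abs{\widehat B(m)}\le C\,N_A/\operatorname{dist}(m,N_A\Z)$ for $m\not\equiv0$. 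The term $m=0$ equals $\tfrac{\abs{A}}{N_A}\tilde B_1(N_A c_k)=O(\abs{A}/N_A)$ by the distribution relation $\sum_{j}\tilde B_1(x+j/N)=\tilde B_1(Nx)$, which is already $O(\abs{A}N_A^{-1/2})$. For $m\not\equiv0$ I insert the Gauss-sum bound $\abs{G(m)}\le(\abs{A}\,\abs{A[m]})^{1/2}$ and group the $m$ by $g=\gcd(m,N_A)$, on each class of which $\abs{A[m]}$ and $\operatorname{dist}(m,N_A\Z)$ are essentially governed by $g$; this reduces the estimate to
\[
\ll\abs{A}^{1/2}\sum_{g\mid N_A}\frac{\abs{A[g]}^{1/2}}{g}\,\log\!\big(N_A/g\big).
\]

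The heart of the argument, and the step I expect to be hardest, is this divisor sum, which must be controlled uniformly over all finite quadratic modules, i.e.\ over all torsion profiles $g\mapsto\abs{A[g]}$ and all factorizations of $N_A$. Writing $\abs{A[g]}=\prod_i\gcd(g,d_i)$ for the elementary divisors $d_1\mid\dots\mid d_r$ of $A$, the contributions from $g=1$ and from small $g$ are tiny because $\abs{A[g]}$ is then small, while the contributions from $g$ close to $N_A$ are damped by the factor $1/g$; balancing the two regimes, and absorbing the divisor-counting function $\#\{g\mid N_A\}\ll_\eps N_A^{\eps}$ together with the logarithms into $N_A^{\eps}$, shows the sum is $O_\eps\!\big(\abs{A}^{1/2}N_A^{\eps-\frac12}\big)$. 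This is exactly where the factor $N_A^{\eps}$ of the statement originates. Combining this parabolic estimate with the elliptic bound above gives the asserted inequality for all $k\ge\tfrac32$ with $2k\equiv-\sig(A)\pmod4$.
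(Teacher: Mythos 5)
Your proposal follows essentially the same route as the paper's proof (Lemmas \ref{a1a2est}--\ref{a5'est}, Theorem \ref{fundest} and Corollary \ref{cor:asy}): the dimension formula, Gauss-sum bounds $\lvert G(m,A)\rvert\le\sqrt{\lvert A\rvert\,\lvert A[m]\rvert}$ for the elliptic invariants, a Fourier expansion of the sawtooth for the parabolic term, and the torsion bound $\lvert A[g]\rvert\le 2g\lvert A\rvert/N$ combined with $\sigma_{-1/2}(N)\ll_\eps N^\eps$ for the resulting divisor sum. The only differences are cosmetic (a finite Fourier transform on $\Z/N\Z$ in place of the infinite Fourier series of $\B$, and a slightly compressed treatment of the isotropic-vector count $\alpha_4$, which the same Gauss-sum machinery handles), so the argument is correct as proposed.
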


In Corollary~\ref{cor:fin}
we conclude that there exist only finitely many isomorphism classes of
finite quadratic modules $A$ with bounded number of generators such that
$S_{k,A}=\{0\}$ for some weight satisfying the condition of the theorem.
In particular, there are only finitely many isomorphism classes of simple lattices.
Note that there do exist infinitely many isomorphism classes of $\tfrac{1}{2}$-simple finite quadratic modules,
see Remark \ref{rem:finex}.

Since the constant implied in the Landau symbol in the above theorem is large,
it is a difficult task to compute the list of all $k$-simple finite quadratic modules
for a bounded number of generators.
We develop an efficient algorithm to address this problem.
The idea is to first compute all {\em anisotropic} finite quadratic modules
that are $k$-simple for some $k$.
To this end we derive an explicit formula for
$\dim(S_{k,A})$ in terms of class numbers of imaginary quadratic
fields and dimension bounds that are
 strong enough to obtain a classification (Theorem~\ref{a5'aniso} and
Table~\ref{simpleanisotable}).

Next we employ the fact that an arbitrary finite quadratic module $A$ has a
unique anisotropic quotient $A_0$, and that there are intertwining
operators for the corresponding Weil representations. For the
difference $\dim S_{k,A} - \dim S_{k,A_0}$ very efficient bounds can be
obtained. This can be used to classify all $k$-simple finite quadratic modules
with a bounded number of generators, see Algorithm \ref{algo} and the tables in Section \ref{sect:5}.

To resolve the problem of finding all simple {\em lattices} of
signature $(2,n)$, it remains to test which of these simple discriminant
forms arise as discriminant groups $L'/L$ of even lattices $L$ of
signature $(2,n)$. This is done in Section \ref{sec:simple-lattices} by
applying a criterion of \cite{CS}. Finally, in Section \ref{sect:bps}
we explain some applications of our results in the context of Borcherds products.

We thank R. Schulze-Pillot and N.-P.~Skoruppa for their help.
We also thank D.~Allcock and A.~Mark for their help with an earlier version of this paper.
Some of the computations were performed on computers at the University of Washington, USA,
supported by W.~Stein and the National Science Foundation Grant No. DMS-0821725.
We would also like to thank the anonymous referee for many helpful comments.

\section{Finite quadratic modules}
\label{sect:2}

Let $(A,Q)$ be a finite quadratic module (also called a finite quadratic form or discriminant form in the literature), that is,
a pair consisting of a finite abelian group $A$ together with a $\Q/\Z$-valued non-degenerate quadratic form $Q$ on $A$.
We denote the bilinear form corresponding to $Q$ by $(x,y) = Q(x+y) - Q(x) - Q(y)$.
Recall that $Q$ is called degenerate if there exists an $x \in A\setminus\{0\}$, such that $(x,y) = 0$ for all $y \in A$.
Otherwise, $Q$ is called non-degenerate.

The morphisms in the category of finite quadratic modules are group homomorphisms that
preserve the quadratic forms. In particular, two finite quadratic modules $(A,Q_A)$ and $(B,Q_B)$
are isomorphic if and only if there is an isomorphism of groups $\varphi: A \rightarrow B$,
such that $Q_B \circ \varphi = Q_A$.

In this section we collect some important facts about finite quadratic modules, which are well
known among experts but not easily found in the literature.
We mainly follow Skoruppa \cite{Sk3}. Other good references include \cite{Ni}, \cite{No}, \cite{Sk2}, \cite{Str}.

If $L$ is an even lattice, the quadratic form $Q$ on $L$
induces a $\Q/\Z$-valued quadratic form on the discriminant group $L'/L$ of $L$.
The pair $(L'/L,Q)$ defines a finite quadratic module, which we call the \emph{discriminant module} of $L$.
According to \cite{Ni}, any finite quadratic module can be obtained as the discriminant module of an even lattice $L$.
If $(b^+, b^-)$ then denotes the real signature of $L$,
the difference $b^+ - b^-$ is determined by its discriminant module $A$ modulo $8$ by Milgram's formula
\[
  \frac{1}{\sqrt{\abs{A}}} \sum_{a \in A} e(Q(a)) = e((b^{+} - b^{-})/8).
\]
Here and throughout we abbreviate $e(z) = e^{2\pi i z}$ for $z\in \C$.
We call $\sig(A) := b^{+} - b^{-}  \in \Z/8\Z$ the \emph{signature} of $A$.

We let $N$ be the level of $A$ defined by
\[
  N = \min\{ n\in \Z_{>0}\mid\; \text{$nQ(x)\in \Z$ for all $x\in A$}\}.
\]
It is easily seen that $N$ is a divisor of $2|A|$.

If $(A,Q_A)$ and $(B,Q_B)$ are two finite quadratic modules then the \emph{orthogonal direct sum}
$({A \oplus B}, Q_A + Q_B)$ also defines a finite quadratic module. Here $(Q_A+Q_B)(a+b) = Q_A(a) + Q_A(b)$ for $a \in A$ and $b \in B$.
We call a finite quadratic module \emph{indecomposable} if it is
not isomorphic to such a direct sum with non-zero $A$ and $B$.

The finite quadratic module $A$ is isomorphic to the orthogonal sum
of its $p$-components $A_p = A \otimes_\Z \Z_p$ with $p$ running through the primes.

Next we describe a list of indecomposable finite quadratic modules.

\begin{definition} \label{elemdf}
  Let $p$ be a prime and $t$ be an integer not divisible by $p$.
  We define the following elementary finite quadratic modules.
  \begin{align*}
     \calA_{p^{k}}^{t} &= \left( \Z/p^{k}\Z, \frac{tx^{2}}{p^{k}} \right) \text{ for } p > 2, \\ 
     \calA_{2^{k}}^{t} &= \left( \Z/2^{k}\Z, \frac{tx^{2}}{2^{k+1}} \right), \\
     \calB_{2^{k}} &= \left( \Z/2^{k}\Z \oplus \Z/2^{k}\Z, \frac{x^{2}+2xy+y^{2}}{2^{k}} \right), \\ 
     \calC_{2^{k}} &= \left( \Z/2^{k}\Z \oplus \Z/2^{k}\Z, \frac{xy}{2^{k}} \right), 
  \end{align*}
\end{definition}

\begin{theorem}
  \label{thm:jordan}\quad
  \begin{enumerate}
  \item The finite quadratic modules listed in Definition \ref{elemdf} are indecomposable.
  \item Every indecomposable finite quadratic module is isomorphic to a finite quadratic module as in Definition \ref{elemdf}.
  \item Moreover, every finite quadratic module is isomorphic to a direct sum of indecomposable finite quadratic modules.
  \end{enumerate}
\end{theorem}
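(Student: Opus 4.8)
The plan is to prove the three assertions together by establishing the single refined statement $(\star)$: \emph{every finite quadratic module is isomorphic to an orthogonal direct sum of the elementary modules of Definition~\ref{elemdf}}. Granting part~(1), statement $(\star)$ yields part~(3) at once, and it yields part~(2) as well, since an indecomposable module is, by $(\star)$, a direct sum with a single summand and hence equal to one elementary module. To prove $(\star)$ I would first use the given decomposition $A\cong\bigoplus_p A_p$ into $p$-components to reduce to the case where $A$ is a $p$-group, and then induct on $\abs{A}$, splitting off one elementary orthogonal summand at a time. The basic mechanism is that whenever $M\subseteq A$ is a \emph{non-degenerate} submodule, the map $A\to\Hom(M,\Q/\Z)$, $a\mapsto (x\mapsto (a,x))$, is surjective with kernel $M^\perp$ and satisfies $M\cap M^\perp=0$; hence $A=M\perp M^\perp$ with $M^\perp$ non-degenerate and $\abs{M^\perp}<\abs{A}$, and the induction hypothesis applies to $M^\perp$.

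The heart of the argument is thus to exhibit, inside a non-degenerate $p$-group $A$ of exponent $p^k$, a non-degenerate elementary submodule of exponent $p^k$. Recall that any element of order $p^k$ generates a cyclic direct factor of the group $A$. For odd $p$ one has $Q(x)=\tfrac12(x,x)$, and I would show that some element $x$ of order $p^k$ has $(x,x)$ of exact denominator $p^k$; then $\langle x\rangle\cong\calA_{p^k}^{t}$ is non-degenerate and splits off. The existence of such an $x$ is the intrinsic form of the classical diagonalisability of symmetric bilinear forms over $\Z_p$ for odd $p$ (equivalently, one writes $A=M'/M$ for a $\Z_p$-lattice $M$ and reduces an orthogonal Jordan basis of $M$). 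For $p=2$ there are two cases. If some element $x$ of order $2^k$ has $Q(x)=t/2^{k+1}$ with $t$ odd, then $\langle x\rangle\cong\calA_{2^k}^{t}$ is non-degenerate and splits off exactly as before. Otherwise $Q$ takes all of its values on order-$2^k$ elements in $\tfrac1{2^k}\Z/\Z$ (the \emph{even} case); here I would extract a non-degenerate rank-two submodule $M$ of exponent $2^k$ on which $Q$ is even and classify such binary modules, showing that each is isomorphic to $\calB_{2^k}$ or $\calC_{2^k}$ according to whether the underlying form is anisotropic or isotropic over $\Q_2$.

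It remains to prove part~(1). The module $\calA_{p^k}^{t}$ is supported on the cyclic group $\Z/p^k\Z$, which admits no non-trivial decomposition as a direct product of groups, so it is indecomposable. For $\calB_{2^k}$ and $\calC_{2^k}$ the underlying group is $(\Z/2^k\Z)^2$, and any orthogonal decomposition into two non-zero summands would present it as $\calA_{2^k}^{t_1}\oplus\calA_{2^k}^{t_2}$ with $t_1,t_2$ odd. But on $\calB_{2^k}$ and $\calC_{2^k}$ the quadratic form $Q$ takes all of its values in $\tfrac1{2^k}\Z/\Z$, whereas a generator of $\calA_{2^k}^{t_i}$ has $Q$-value $t_i/2^{k+1}$ of denominator $2^{k+1}$; since the image $Q(A)\subseteq\Q/\Z$ is invariant under isomorphism, no such decomposition can exist.

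I expect the even case of the $2$-adic splitting step to be the main obstacle, as this is precisely where quadratic forms over $\Z_2$ fail to diagonalise and the binary modules $\calB_{2^k}$ and $\calC_{2^k}$ become unavoidable. Concretely, the remaining work is to show that in the even case a non-degenerate binary summand of full exponent can always be extracted, and that the even non-degenerate binary forms of exponent $2^k$ are exhausted up to isomorphism by $\calB_{2^k}$ and $\calC_{2^k}$; this is the finite-module shadow of the classification of even unimodular binary lattices over $\Z_2$ into the hyperbolic plane and its anisotropic companion.
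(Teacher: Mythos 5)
Your proposal is correct in outline, but it takes a genuinely different route from the paper: the paper gives no argument at all beyond declaring part (1) clear and citing the classification of $p$-adic lattices (Nikulin, Proposition 1.8.1) for parts (2) and (3), whereas you construct the decomposition intrinsically on the finite module. Your splitting mechanism is sound — for non-degenerate $M\subseteq A$ the surjectivity of $A\to\Hom(M,\Q/\Z)$ (via divisibility of $\Q/\Z$ and non-degeneracy of $A$) together with $M\cap M^\perp=0$ gives $A=M\perp M^\perp$ by counting — and the odd-$p$ step is complete: if $x$ has order $p^k$ equal to the exponent, pick $y$ with $(x,y)$ of exact denominator $p^k$; then one of $x$, $y$, $x+y$ has self-pairing of exact denominator $p^k$ because $2(x,y)$ dominates the cross term when $p$ is odd. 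Your indecomposability argument for $\calB_{2^k}$, $\calC_{2^k}$ via the invariant $Q(A)\subseteq\Q/\Z$ is also clean, granted the small unstated lemma that every non-degenerate quadratic module on $\Z/2^k\Z$ is some $\calA_{2^k}^{t}$ with $t$ odd (well-definedness forces $Q(x)=tx^2/2^{k+1}$). The one place where your write-up remains a sketch is exactly where you say it does: the even case at $p=2$. There you still owe the verification that the two generators $x,y$ with $(x,y)$ of exact denominator $2^k$ span a subgroup of order $2^{2k}$ on which the form is non-degenerate (this follows from the Gram determinant $(x,x)(y,y)-(x,y)^2$ being a $2$-adic unit times $2^{-2k}$, since $(x,x)$ and $(y,y)$ have denominator at most $2^{k-1}$ in the even case), and the classification of the resulting even binary modules into $\calB_{2^k}$ and $\calC_{2^k}$. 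What your approach buys is a self-contained proof not resting on the $p$-adic lattice theory; what the paper's citation buys is brevity and the avoidance of precisely this $2$-adic bookkeeping.
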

\begin{proof}
  Statement (1) is clear. The other two statements follow from
  the classification of $p$-adic lattices. See \cite{Ni} for details, in particular Proposition 1.8.1.
\end{proof}

Consider a decomposition of the $p$-components of a finite quadratic module $A$ as a direct sum
\[
  A_p = A_{p,1} \oplus \ldots \oplus A_{p,l_p},
\]
where each $A_{p,i}$ is a direct sum of elementary finite quadratic modules
$\calA_{q_i}^{t_i}, \calB_{q_i}, \calC_{q_i}$, with $q_{i} = p^{r_i}$, $r_i \geq 1$ and $q_1 < \ldots < q_{r}$.

Such a decomposition is called a \emph{Jordan decomposition}
of $A$ and the direct summands $A_{p,i}$ are called \emph{Jordan components} of $A$.
We will also call any finite quadratic module that is isomorphic to a direct sum
of elementary finite quadratic modules $\calA_{q}^{t}, \calB_{q}, \calC_{q}$ for a fixed $q$
a Jordan component.

We will now describe a handy notation for such a Jordan decomposition.
The symbols we use are essentially those introduced by Conway and Sloane \cite{CS} for the genus of an integral lattice,
that is, its class under rational equivalence.
The following statement (see \cite{Ni}, Corollary 1.16.2) motivates the use of their symbols for us.

\begin{proposition}
  Two even lattices $L$ and $M$ that have the same real signatures
  have isomorphic finite quadratic modules if and only if $L$ and $M$ are in the same genus.
\end{proposition}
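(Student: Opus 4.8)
The plan is to recast "same genus" as local isometry at every place and then compare $L$ and $M$ prime by prime. Recall that two lattices lie in the same genus precisely when they are isometric over $\Z_p$ for every prime $p$ and over $\R$. Since we keep the standing hypothesis that $L$ and $M$ have the same real signature $(b^+,b^-)$, the real place is already settled, and only the finite places remain. This hypothesis is genuinely needed: by Milgram's formula the discriminant module determines the signature $\sig(A)\in\Z/8\Z$ only modulo $8$, so the actual signature must be supplied separately. The other ingredient I would use is that the discriminant module splits as the orthogonal sum $\bigoplus_p A_p$ of its $p$-components, and that $A_p=(L'/L)\otimes\Z_p$ is exactly the discriminant module of the $\Z_p$-lattice $L\otimes\Z_p$; the same holds for $M$.

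The easy implication is that the same genus forces isomorphic discriminant modules: if $L\otimes\Z_p\cong M\otimes\Z_p$ for all $p$, then each local isometry induces an isometry $A_p(L)\cong A_p(M)$ of $p$-components, and taking the orthogonal sum over all $p$ yields $L'/L\cong M'/M$. For the converse I would argue that, for each prime $p$, the isometry class of a nondegenerate $\Z_p$-lattice is determined by three invariants: its rank, the square class of its determinant in $\Q_p^\times/(\Q_p^\times)^2$ (equivalently the $p$-valuation together with a unit square class), and its discriminant $p$-form. The first two are already forced by the data we control. The rank is $b^++b^-$, the same for $L$ and $M$. The determinant, as a rational number up to squares, equals $(-1)^{b^-}\,|A|$, because $\sgn\det L=(-1)^{b^-}$ is read off the signature while $\lvert\det L\rvert=\lvert L'/L\rvert=\lvert A\rvert$; hence $\det L$ and $\det M$ have the same image in $\Q_p^\times/(\Q_p^\times)^2$. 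Since $A_p(L)\cong A_p(M)$ by hypothesis, all three invariants agree at every $p$, so $L\otimes\Z_p\cong M\otimes\Z_p$ and $L,M$ lie in the same genus.

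For odd $p$ this matching is transparent. A $\Z_p$-lattice splits into Jordan blocks $p^jU_j$ with $U_j$ unimodular, the discriminant $p$-form recovers the rank and determinant class of each $U_j$ with $j\geq 1$ (this is the odd-$p$ part of the classification underlying Theorem~\ref{thm:jordan}), and the unimodular block $U_0$ is then pinned down by the total rank and the total determinant class. The main obstacle is the prime $p=2$. Here Jordan decompositions are not unique, the parity (type I versus type II) and the oddity enter as genuine invariants, and the discriminant $2$-form does not by itself exhibit all of them. The delicate point is to show that, for \emph{even} lattices, the rank, the determinant square class, and the discriminant $2$-form nevertheless determine the $\Z_2$-genus, using Milgram's formula to tie the $2$-adic oddity to the signature modulo $8$. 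This is precisely the $2$-adic bookkeeping carried out by Nikulin, and I would invoke \cite{Ni} (Proposition 1.8.1 and its corollaries) to complete this step rather than redo it by hand.
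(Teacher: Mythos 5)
Your proposal is correct and in substance coincides with the paper, which offers no argument of its own but simply cites Nikulin (\cite{Ni}, Corollary~1.16.2): your outline supplies the routine parts --- genus as local isometry everywhere, the real place handled by the signature hypothesis, the recovery of rank and determinant square class from $(b^+,b^-)$ and $|A|$, and the odd-prime Jordan bookkeeping --- and then, for the only genuinely delicate step (the prime $2$, where Jordan splittings are non-unique and type/oddity enter), you defer to the same reference the paper relies on. That is a sound and honest division of labour, and nothing in your sketch is incorrect.
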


The following two lemmas are straightforward to prove (see also \cite{Ni}, Proposition 1.8.2).
\begin{lemma}
  \label{lem:isomodd}
  Let $p>2$ be a prime and let $q=p^{r}$ for a positive integer $r$.
  \begin{enumerate}
  \item We have $\calA_q^t \cong \calA_q^s$ if and only if $\leg{s}{p} = \leg{t}{p}$.
  \item Suppose that $\leg{2s}{p}=1$ and $\leg{2t}{p}=-1$.
            Then $\calA_q^{s} \oplus \calA_q^{s} \cong \calA_{q}^{t} \oplus \calA_q^{t}$.
  \item
    In particular, if $A$ is a Jordan component of the form
    \[
      A = \bigoplus_{i=1}^{n} \calA_q^{t_{i}},
    \]
    then $A$ is isomorphic to
    \[
      \calA_{q}^{t} \oplus \bigoplus_{i=1}^{n-1} \calA_{q}^{s}, \text{ with } \prod_{i=1}^{n} \leg{2t_{i}}{p} = \leg{2t}{p}
    \]
    for any $s$ with $\leg{2s}{p}=1$.
  \end{enumerate}
\end{lemma}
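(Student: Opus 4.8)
The plan is to handle the three parts in order, using throughout the explicit description of morphisms between cyclic quadratic modules. Since any group homomorphism $\Z/q\Z \to \Z/q\Z$ is multiplication by some $u\in\Z/q\Z$, and is an isomorphism exactly when $u\in(\Z/q\Z)^{\times}$, such a map sends $\calA_q^s$ to $\calA_q^t$ as quadratic modules precisely when $su^2\equiv t\pmod q$ (checking the defining condition $Q_B\circ\varphi=Q_A$ on the generator $1$ suffices, as it is quadratic). This elementary observation drives everything.

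For (1), I would note that by the above $\calA_q^s\cong\calA_q^t$ holds iff $t/s$ lies in the subgroup of squares of $(\Z/q\Z)^{\times}$. Because $p$ is odd, $(\Z/q\Z)^{\times}$ is cyclic of order $p^{r-1}(p-1)$, and reduction modulo $p$ identifies its squares with the squares of $(\Z/p\Z)^{\times}$: the kernel $1+p\Z/q\Z$ has odd order $p^{r-1}$ and so consists entirely of squares. Hence $t/s$ is a square mod $q$ iff $\leg{t/s}{p}=1$, i.e. iff $\leg{s}{p}=\leg{t}{p}$, as claimed.

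For (2) I would exhibit an explicit orthogonal change of basis. Writing the two forms on $(\Z/q\Z)^2$ as $Q_s(v)=s\,v^{\mathsf T}v/q$ and $Q_t(v)=t\,v^{\mathsf T}v/q$, the map $v\mapsto Mv$ is an isomorphism $\calA_q^s\oplus\calA_q^s\to\calA_q^t\oplus\calA_q^t$ for any $M\in\Mat_2(\Z/q\Z)$ with $M^{\mathsf T}M\equiv(s/t)\,I\pmod q$ and $\det M\in(\Z/q\Z)^{\times}$, since then $Q_t(Mv)=t\,v^{\mathsf T}M^{\mathsf T}Mv/q=s\,v^{\mathsf T}v/q=Q_s(v)$. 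Putting $\lambda=s/t$ (a unit), the matrix $M=\kzxz{a}{-c}{c}{a}$ does the job as soon as $a^{2}+c^{2}\equiv\lambda\pmod q$, for then $M^{\mathsf T}M=(a^{2}+c^{2})I\equiv\lambda I$ and $\det M=a^{2}+c^{2}=\lambda$ is a unit. So everything reduces to representing $\lambda$ by the binary form $x^{2}+y^{2}$ modulo $q$, which I expect to be the only real obstacle: over $\F_p$ the nondegenerate form $x^{2}+y^{2}$ represents every nonzero value (the conic $x^{2}+y^{2}=\lambda$ has $p-\leg{-1}{p}>0$ points), and since $\lambda$ is a unit such a solution has $(a,c)\not\equiv(0,0)$, so Hensel's lemma lifts it to a solution modulo $q=p^{r}$. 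The hypotheses $\leg{2s}{p}=1$, $\leg{2t}{p}=-1$ only serve to put us in the genuinely nontrivial case $\leg{s}{p}\neq\leg{t}{p}$ (otherwise (2) is immediate from (1)); the construction itself needs nothing beyond $\lambda$ being a unit.

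For (3) I would fix once and for all an $s$ with $\leg{2s}{p}=1$ and a reference $t'$ with $\leg{2t'}{p}=-1$. By (1) each summand $\calA_q^{t_i}$ is isomorphic to $\calA_q^{s}$ or to $\calA_q^{t'}$ according as $\leg{2t_i}{p}$ is $+1$ or $-1$, so $A\cong(\calA_q^{s})^{\oplus m}\oplus(\calA_q^{t'})^{\oplus(n-m)}$ for some $m$. Now (2) gives $\calA_q^{t'}\oplus\calA_q^{t'}\cong\calA_q^{s}\oplus\calA_q^{s}$, which lets me convert the $\calA_q^{t'}$ summands in pairs into $\calA_q^{s}$ summands: if $n-m$ is even they all disappear and $A\cong(\calA_q^{s})^{\oplus n}$, while if $n-m$ is odd exactly one $\calA_q^{t'}$ survives and $A\cong\calA_q^{t'}\oplus(\calA_q^{s})^{\oplus(n-1)}$. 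In either case $A\cong\calA_q^{t}\oplus(\calA_q^{s})^{\oplus(n-1)}$ with $\leg{2t}{p}=(-1)^{\,n-m}=\prod_{i=1}^{n}\leg{2t_i}{p}$, the factors with $\leg{2t_i}{p}=1$ contributing trivially to the product. This is the asserted normal form, and it is precisely the pairing step, hence part (2), that carries the weight of the argument.
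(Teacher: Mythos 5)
Your proof is correct. There is, however, nothing in the paper to compare it against: the authors do not prove this lemma at all, declaring it ``straightforward'' and pointing to Proposition 1.8.2 of \cite{Ni}. Your self-contained argument fills that gap soundly. In (1) the key point — that the squares of $(\Z/q\Z)^{\times}$ are exactly the units that are squares modulo $p$, because the kernel of reduction has odd order $p^{r-1}$ and hence lies in the index-two subgroup of squares — is stated and used correctly. In (2) the reduction to representing the unit $\lambda=s/t$ by $x^{2}+y^{2}$ modulo $q$ is clean, and the representability is justified properly: the point count $p-\leg{-1}{p}>0$ over $\F_p$ gives a solution with at least one coordinate a unit, and since $p$ is odd that coordinate has nonvanishing partial derivative, so Hensel lifts it to $\Z/p^{r}\Z$; the resulting $M=\kzxz{a}{-c}{c}{a}$ has $M^{\mathrm{T}}M=\lambda I$ and unit determinant, hence is an isomorphism of quadratic modules. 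Your side remark that the hypotheses $\leg{2s}{p}=1$, $\leg{2t}{p}=-1$ are not needed for the construction is also correct — the isomorphism $\calA_q^{s}\oplus\calA_q^{s}\cong\calA_q^{t}\oplus\calA_q^{t}$ holds for all units $s,t$, as one can confirm a posteriori from Theorem \ref{thm:disc-iso} and Proposition \ref{prop:gaussodd}, since $G(n,\calA_q^{t}\oplus\calA_q^{t})$ does not depend on $t$. Part (3) is then the expected bookkeeping with the sign $(-1)^{n-m}$, done correctly.
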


\begin{lemma}
  \label{lem:isomeven}
  Let $q = 2^{r}$ for a positive integer $r$.
  Moreover, let $s, t$ be odd integers.
  \begin{enumerate}
  \item We have $\calA_2^{s} \equiv \calA_{2}^{t}$ if and only if $s \equiv t \pmod{4}$.
  \item If $r > 1$, then $\calA_{q}^{s} \equiv \calA_{q}^{t}$ if and only if $s \equiv t \pmod{8}$.
  \item Let $s_1,\ldots,s_{n},t_1, \ldots, t_n \in \Z$ such that
        \[
          \sum_{i=1}^{n} s_i \equiv  \sum_{i=1}^{n} t_{i} \pmod{8} \text{ and }
          \prod_{i=1}^{n} \leg{s_i}{2} = \prod_{i=1}^{n}{t_{i}}.
        \]
        Then
        \[
          \bigoplus_{i=1}^{n} \calA_{q}^{s_{i}} \cong \bigoplus_{i=1}^{n} \calA_{q}^{t_{i}}.
        \]
  \item We have $\calB_q \oplus \calB_q \cong \calC_q \oplus \calC_q$.
  \item Moreover, we have $\calA_{q}^{t} \oplus \calB_q \cong \calA_q^{t_1} \oplus \calA_{q}^{t_{2}} \oplus \calA_{q}^{t_{3}}$
        with $t_1 + t_2 + t_3 \equiv t \pmod{8}$ and
        \[
          \prod_{i=1}^{3}\leg{t_i}{2} = - \leg{t}{2}.
        \]
  \item Finally, $\calA_{q}^{t} \oplus \calC_q \cong \calA_q^{t_1} \oplus \calA_{q}^{t_{2}} \oplus \calA_{q}^{t_{3}}$
        with $t_1 + t_2 + t_3 \equiv t \pmod{8}$ and
        \[
          \prod_{i=1}^{3}\leg{t_i}{2} = \leg{t}{2}.
        \]
  \end{enumerate}
\end{lemma}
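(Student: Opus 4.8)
The plan is to treat the rank-one cases (1) and (2) by a direct computation with the automorphism group of a cyclic group, and the higher-rank cases (3)--(6) as instances of the classification of quadratic forms over $\Z_2$ of a fixed scale $q=2^r$. For (1) and (2) I would use that every automorphism of $\Z/2^r\Z$ is multiplication by a unit $u\in(\Z/2^r\Z)^\times$, and that it carries $\calA_q^t$ to $\calA_q^{tu^2}$; thus $\calA_q^s\cong\calA_q^t$ precisely when $s\equiv tu^2\pmod{2^{r+1}}$ for some odd $u$. For $r=1$ the only unit is $1$ and $2^{r+1}=4$, giving $s\equiv t\pmod{4}$. For $r\ge 2$ the squares of units modulo $2^{r+1}$ are exactly the residues $\equiv 1\pmod{8}$, so $tu^2$ ranges over all residues $\equiv t\pmod{8}$ and the criterion becomes $s\equiv t\pmod{8}$.

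For (3)--(6) I would record the complete invariants of a unimodular scale-$q$ form: its rank, its determinant class, its type (odd or even), and, in the odd case, its \emph{oddity}, the residue modulo $8$ of the trace of any diagonalization. For a diagonal form $\bigoplus_i\calA_q^{t_i}$ these read off as $\prod_i\leg{t_i}{2}$ and $\sum_i t_i\bmod 8$. That they are preserved by isomorphisms is immediate; the real content is that they are \emph{complete}. Statement (3) is precisely this completeness in the odd case, since the hypotheses $\sum s_i\equiv\sum t_i\pmod{8}$ and $\prod\leg{s_i}{2}=\prod\leg{t_i}{2}$ assert equality of oddity and determinant class for two diagonal forms of equal rank.

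Statement (4) identifies $\calB_q$ and $\calC_q$ with the two even unimodular rank-two blocks $\zxz{2}{1}{1}{2}$ and $\zxz{0}{1}{1}{0}$ over $\Z_2$, of determinants $3$ and $-1$; the relation $\calB_q\oplus\calB_q\cong\calC_q\oplus\calC_q$ is then the classical $\Z_2$-isometry between $\zxz{2}{1}{1}{2}^{\oplus 2}$ and $\zxz{0}{1}{1}{0}^{\oplus 2}$, which one exhibits explicitly. For (5) and (6) I would diagonalize $\calA_q^t\oplus\calB_q$ and $\calA_q^t\oplus\calC_q$ by hand: the odd vector supplied by the $\calA_q^t$-summand makes each rank-three form odd, hence isometric to some $\calA_q^{t_1}\oplus\calA_q^{t_2}\oplus\calA_q^{t_3}$; statement (3) then upgrades one such diagonalization to the entire family, so the relation holds for every triple with the same oddity and determinant class. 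A short computation gives determinant class $\leg{t}{2}\cdot\leg{3}{2}=-\leg{t}{2}$ for (5) and $\leg{t}{2}\cdot\leg{-1}{2}=\leg{t}{2}$ for (6), and oddity $t$ in both cases, i.e.\ $t_1+t_2+t_3\equiv t\pmod{8}$.

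The routine parts are the preservation of the invariants and the explicit changes of basis behind (4)--(6); for instance $\calA_q^1\oplus\calA_q^1\cong\calA_q^5\oplus\calA_q^5$ is realized already over $\Z$ by $\zxz{1}{2}{2}{-1}$. The main obstacle is the completeness statement underlying (3), namely that equal rank, oddity, and determinant class force an isomorphism. I would reduce it to a short list of two-dimensional moves, such as $\calA_q^1\oplus\calA_q^7\cong\calA_q^3\oplus\calA_q^5$, $\calA_q^1\oplus\calA_q^3\cong\calA_q^5\oplus\calA_q^7$, and $\calA_q^3\oplus\calA_q^3\cong\calA_q^7\oplus\calA_q^7$, each verified by an explicit isometry, and then argue by induction that any two diagonal forms with matching invariants are joined by such moves. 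Alternatively, one simply invokes the classification of $p$-adic lattices recorded in Nikulin's Proposition 1.8.2, of which all six assertions are direct consequences.
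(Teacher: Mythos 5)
Your proposal is correct, but it is worth pointing out that the paper itself gives no argument for this lemma: it is introduced only with the remark that it is ``straightforward to prove (see also [Ni], Proposition 1.8.2)'', which is precisely the fallback you offer in your final sentence. What you sketch is a self-contained version of that citation. For (1) and (2) your computation is complete: automorphisms of $\Z/2^r\Z$ are multiplications by units $u$, the class of $u^2$ modulo $2^{r+1}$ is well defined, and for $r\geq 2$ these squares are exactly the residues $\equiv 1\pmod 8$, giving the stated congruences. For (3)--(6) you follow the standard classification of unimodular scale-$q$ forms over $\Z_2$ by rank, type, determinant class and oddity. One small imprecision: $\prod_i\leg{t_i}{2}$ is not literally the determinant class, since it only detects $\det$ modulo $\{\pm 1\}\cdot(\Z_2^\times)^2$; however, a short computation (writing each $t_i$ as $\epsilon_i+4\delta_i$ with $\epsilon_i=\pm1$) shows that rank, $\sum_i t_i\bmod 8$ and $\prod_i\leg{t_i}{2}$ together determine $\det\bmod 8$, so the hypotheses of (3) really do force equality of all invariants and your argument is sound --- this sign is of course exactly what the Conway--Sloane symbol records. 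Your list of rank-two moves $(1,1)\sim(5,5)$, $(1,7)\sim(3,5)$, $(1,3)\sim(5,7)$, $(3,3)\sim(7,7)$ is exactly the set of nontrivial identifications in rank two, and the induction you defer is the genuine content of Nikulin's Proposition 1.8.2; either carrying it out or citing it closes the proof. You also silently correct a typo in the paper's Definition \ref{elemdf}, where $\calB_{2^k}$ is written with $x^2+2xy+y^2$ (which is degenerate); the intended form is $x^2+xy+y^2$, i.e.\ the block $\left(\begin{smallmatrix}2&1\\1&2\end{smallmatrix}\right)$ that you use, and your determinant and oddity computations for (4)--(6) are consistent with that correction and with the signature check via Milgram's formula. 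In short, your route buys a checkable elementary proof where the paper buys brevity by citation; both are valid.
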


\begin{definition}
  \label{def:genus-symbols}
  Using the preceding lemmas, we define a symbol for a Jordan decomposition of a finite quadratic module as follows.
  First of all, by convention, we write $1^{+1}$ or $1^{-1}$ for the trivial module $A=\{0\}$ with
  the $0$-map as quadratic form.

  Now let $A$ be a Jordan component, $p$ be a prime and $q = p^{r}$.
  \begin{enumerate}
  \item If $p$ is odd, the two isomorphism classes of Jordan components in Lemma \ref{lem:isomodd}, (3)
        are denoted $q^{\pm n}$, where $\leg{2t}{p} = \pm 1$.
  \item Let $p=2$.
    \begin{enumerate}
    \item We write $q^{\pm n}_{t}$ if $A$ is isomorphic to $\calA_{q}^{t_{1}} \oplus \ldots \oplus \calA_{q}^{t_{n}}$
          with $t_1 +\ldots + t_n  \equiv t \pmod{8}$ and $\leg{t_1}{2} \cdots \leg{t_n}{2} = \pm 1$.
          We normalize $t$ to be contained in the set $\{1,3,5,7\}$ and if $q = 2$, we take $t \in \{1,7\}$.
    \item We write $q^{+2n}$ if $A$ is isomorphic to $n$ copies of $\calC_{q}$.
    \item And we write $q^{-2n}$ if $A$ is isomorphic to $n-1$ copies of $\calC_q$ and one copy of $\calB_q$.
    \end{enumerate}
  \end{enumerate}
  For a general finite quadratic module, we concatenate the symbols of the Jordan components as defined above.
\end{definition}

\begin{example}
  The Jordan decomposition $\calA_{2}^{1} \oplus (\calA_{3}^{1} \oplus \calA_{3}^{1})$ has the symbol $2_{1}^{+1}3^{+2}$
  and $2^{-2}4_{3}^{+3}3^{-1}$ is the symbol for the Jordan decomposition
  $\calB_{2} \oplus (\calA_{4}^{1} \oplus \calA_{4}^{1} \oplus \calA_{4}^{1}) \oplus \calA_{3}^{1}$.
\end{example}

\begin{proposition}
  \label{prop:genus-symbol-isom}
   Let $A$ and $B$ be finite quadratic modules and let $p > 2$ be a prime.
   If $A_p \cong B_p$, then the corresponding $p$-components of the genus symbols of $A$
   and $B$ coincide.
\end{proposition}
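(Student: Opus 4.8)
The plan is to show that both kinds of data recorded by the $p$-part of a genus symbol—the rank exponents $n_{p^r}$ and the signs $\epsilon_{p^r}\in\{\pm 1\}$—are invariants of the isomorphism class of $A_p$ as a finite quadratic module, and then to observe that any isomorphism $A_p\cong B_p$ must preserve them. First, for the ranks: an isomorphism of finite quadratic modules is in particular an isomorphism of abelian groups, so by the structure theorem for finite abelian $p$-groups, $A_p$ and $B_p$ have the same number $n_{p^r}$ of cyclic direct factors of order $p^r$ for every $r\ge 1$. By Lemma \ref{lem:isomodd} these integers are exactly the exponents appearing in the symbols $p^{r\,\pm n_{p^r}}$, so the ranks agree.

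The crux is the signs, which I would extract from a canonical subquotient carrying a well-defined $\F_p$-valued form. For a finite quadratic module $(M,Q)$ with $M$ a $p$-group, set
\[
  P_r(M) \;=\; M[p^r]\big/\bigl(M[p^{r-1}]+pM[p^{r+1}]\bigr),
\]
where $M[p^j]=\{x\in M:\ p^jx=0\}$. A short computation on cyclic factors shows that only factors of order exactly $p^r$ survive, so $P_r(M)$ is an $\F_p$-vector space of dimension $n_{p^r}$. On it I would define $\bar Q_r(\bar x)=p^{r-1}Q(x)\in\tfrac1p\Z/\Z\cong\F_p$. This is well defined: if $h\in M[p^{r-1}]$ then $p^{r-1}Q(h)\equiv 0$ and $p^{r-1}(x,h)=(x,p^{r-1}h)=0$, while if $h=pk$ with $k\in M[p^{r+1}]$ then $p^{r-1}Q(pk)=p^{r+1}Q(k)\equiv 0$ and $p^{r-1}(x,pk)=(p^rx,k)=0$ since $x\in M[p^r]$.

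Evaluating this on a Jordan decomposition $A_p\cong\bigoplus_i\calA_{p^r}^{t_i}\oplus(\text{other scales})$, the summands of scale $\ne p^r$ die in $P_r$, while each $\calA_{p^r}^{t_i}$ contributes one copy of $(\F_p,\,t_ix^2/p)$. Hence $(P_r(A_p),\bar Q_r)\cong\bigoplus_{i}\calA_p^{t_i}$, a nondegenerate $\F_p$-quadratic space of rank $n_{p^r}$ and discriminant $\leg{\prod_i t_i}{p}$. Since $\epsilon_{p^r}=\prod_i\leg{2t_i}{p}=\leg{2}{p}^{\,n_{p^r}}\leg{\prod_i t_i}{p}$, the sign is determined by the rank together with the discriminant of $(P_r(A_p),\bar Q_r)$, both of which are isometry invariants. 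As $P_r$ and $\bar Q_r$ are manufactured functorially from the group operations and from $Q$, an isomorphism $A_p\cong B_p$ induces an isometry $(P_r(A_p),\bar Q_r)\cong(P_r(B_p),\bar Q_r)$; therefore $\epsilon_{p^r}(A)=\epsilon_{p^r}(B)$ for all $r$, and the $p$-parts of the genus symbols coincide.

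The main obstacle is exactly the construction and verification of the rescaled form $\bar Q_r$: one must choose the subquotient so that it isolates the scale $p^r$ (ruling out both lower and higher scales), and must check that multiplying $Q$ by $p^{r-1}$ descends to a well-defined, nondegenerate $\F_p$-valued form. Once this localized form is in hand, the classification of quadratic forms over $\F_p$ by rank and discriminant—using that $2$ is a unit—converts isometry invariance into equality of the signs, and the hypothesis $p>2$ is used precisely here.
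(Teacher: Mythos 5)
Your proof is correct, but it takes a genuinely different route from the paper. The paper disposes of this proposition in one line by citing the uniqueness of the Jordan decomposition of $p$-adic lattices at odd primes (Theorem 5.3.2 in Kitaoka), whereas you give a self-contained argument that manufactures the symbol's data directly from the isomorphism class: the ranks from the structure theorem for finite abelian $p$-groups, and the signs from the discriminant of the rescaled form $\bar Q_r$ on the functorial subquotient $P_r(M)=M[p^r]/\bigl(M[p^{r-1}]+pM[p^{r+1}]\bigr)$. Your computation that only cyclic factors of order exactly $p^r$ survive in $P_r$, and your well-definedness checks for $\bar Q_r$ (where the bound $p^{j}Q(h)\in\Z$ for $h$ of order $p^{j}$ uses $p>2$, exactly as you note), are sound; since $\epsilon_{p^r}=\leg{2}{p}^{n_{p^r}}\leg{\prod_i t_i}{p}$ is then expressed through the rank and the discriminant of $(P_r,\bar Q_r)$, both isometry invariants, the $p$-parts of the symbols must agree. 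What your approach buys is independence from the external reference: in effect you reprove the uniqueness of the odd-$p$ Jordan decomposition (in the form needed here), at the cost of a page of verification; the paper's citation buys brevity at the cost of self-containedness. One small remark: you do still implicitly rely on the \emph{existence} of a Jordan decomposition (Theorem \ref{thm:jordan}) to evaluate $(P_r,\bar Q_r)$ and identify it with $\bigoplus_i\calA_p^{t_i}$, but not on its uniqueness, which is the point.
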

\begin{proof}
  This follows from the uniqueness of the Jordan decomposition in the case of an odd prime $p$
  (see, for instance, Theorem 5.3.2 in \cite{Ki}).
\end{proof}

\begin{remark}
  In contrast to Proposition \ref{prop:genus-symbol-isom}, note that the symbol (and the Jordan decomposition)
  for $p = 2$ is not uniquely determined by the isomorphism class.
  For instance, the Jordan decompositions $2_{1}^{+1}4_{1}^{+1}$ and $2_{7}^{+1}4_{3}^{-1}$
  correspond to isomorphic finite quadratic modules. See also Theorem \ref{thm:disc-iso} in the next section.
\end{remark}

For an integer $n$ we define the $n$-torsion subgroup of a finite quadratic module $A$ by
\[
  A[n]=\left\{\gamma \in A\mid \; n\gamma = 0\right\}.
\]
Moreover, we let $A^n$ be the image of the multiplication by $n$ map. Then we have the exact sequence
\[
  0\longrightarrow A[n] \longrightarrow A \longrightarrow A^n\longrightarrow  0,
\]
and $A$ is the orthogonal sum of $A[n]$ and $A^n$.
It follows from the 
theorem of elementary divisors and the 
Jordan decompositon (Theorem \ref{thm:jordan})
that
\begin{equation}\label{lnest}
  |A[n]|\leq 2n\frac{|A|}{N}.
\end{equation}
The quantity $d=d_A:= |A/\{\pm 1\}|$ can be expressed in term of the $2$-torsion as
\begin{align}
  \label{eq:da}
  d=\frac{|A|}{2}+\frac{|A[2]|}{2}.
\end{align}

\subsection{Gauss sums and divisor sums}
\label{sec:gauss-sums-divisor}

We now collect some facts about Gauss sums and divisor sums
associated to finite quadratic modules which we will need later.
For an integer $n\in \Z$ the Gauss sum $G(n,A)$ is defined by
\begin{equation}\label{defgauss}
  G(n,A)=
  \sum_{\gamma\in A} e(n Q(\gamma)).
\end{equation}
We have the elementary properties
\begin{align}
  \label{basic1}
  G(-n,A)   &= \overline{G(n,A)}, \\
  \label{basic2}
  G(n+N,A) &= G(n,A), \\
  \label{basic3}
  G(n,A \oplus B) &= G(n,A)G(n,B).
\end{align}
The following lemma is a consequence of \cite[Lemma 3.1]{Bo3}.

\begin{lemma}\label{gauss}
If $(n,N)=1$ then $|G(n,A)|=\sqrt{|A|}$. For general $n$ we have the estimate
  \[
    |G(n,A)|\leq \sqrt{|A|}\sqrt{|A[n]|}.
  \]
\end{lemma}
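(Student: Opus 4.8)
The plan is to treat the two assertions separately, deriving the general estimate from the first one. For the case $(n,N)=1$, I would observe that the pair $(A,nQ)$ is again a finite quadratic module: the bilinear form attached to $nQ$ is $n\cdot(\,\cdot\,,\cdot\,)$, and since $n$ is invertible modulo $N$ this form has the same (trivial) radical as $(\,\cdot\,,\cdot\,)$, so $nQ$ is non-degenerate. Milgram's formula applied to $(A,nQ)$ then gives
\[
  \frac{1}{\sqrt{\abs{A}}}\,G(n,A)=\frac{1}{\sqrt{\abs{A}}}\sum_{\gamma\in A} e(nQ(\gamma))=e\big((b^+-b^-)/8\big),
\]
a quantity of absolute value $1$, whence $\abs{G(n,A)}=\sqrt{\abs{A}}$.

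For general $n$, I would exploit the orthogonal decomposition $A=A[n]\oplus A^n$ recorded above together with the multiplicativity \eqref{basic3}, which yields
\[
  G(n,A)=G(n,A[n])\cdot G(n,A^n).
\]
On $A[n]$ I would use only the trivial bound $\abs{G(n,A[n])}\le \abs{A[n]}$, the sum having $\abs{A[n]}$ terms of modulus $1$. On $A^n$ I claim that $n$ is coprime to the level of $A^n$, so that the first part applies and gives $\abs{G(n,A^n)}=\sqrt{\abs{A^n}}$. Combining these with the exact sequence identity $\abs{A}=\abs{A[n]}\cdot\abs{A^n}$ gives
\[
  \abs{G(n,A)}\le \abs{A[n]}\sqrt{\abs{A^n}}=\sqrt{\abs{A[n]}}\cdot\sqrt{\abs{A[n]}\,\abs{A^n}}=\sqrt{\abs{A}}\,\sqrt{\abs{A[n]}},
\]
as desired.

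The step that needs care---and which I regard as the main obstacle---is the claim that the level of $A^n$ is prime to $n$. Here I would argue that multiplication by $n$ is an automorphism of $A^n$: from $A=A[n]\oplus A^n$ one gets $nA=nA^n$, while by definition $A^n=nA$, so $n\cdot A^n=A^n$, and multiplication by $n$ is surjective, hence bijective, on the finite group $A^n$. Consequently, for any prime $p\mid n$ the $p$-component $(A^n)_p$ must be trivial, since multiplication by $n$ annihilates every element of order $p$ and so cannot be injective on a nontrivial $p$-group. Therefore no prime dividing $n$ divides the level of $A^n$, which is exactly the coprimality needed to invoke the first part. An alternative, essentially equivalent, route would be to prove the estimate one prime at a time using \eqref{basic3} over the $p$-components $A_p$, applying the first part verbatim for $p\nmid n$ (where $\abs{A_p[n]}=1$) and reducing the remaining primes to the $p$-group case.
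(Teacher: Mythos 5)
Your first assertion is handled correctly: since the exponent of $A$ divides $N$, the condition $(n,N)=1$ makes multiplication by $n$ an automorphism of $A$, so $nQ$ is again non-degenerate and Milgram's formula applied to $(A,nQ)$ gives $\abs{G(n,A)}=\sqrt{\abs{A}}$. This is a genuine (and valid) alternative route; the paper itself gives no argument and simply quotes \cite[Lemma 3.1]{Bo3}.

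The second half, however, has a real gap: the decomposition $A=A[n]\oplus A^n$ on which everything rests is false in general, even though it is asserted in Section~\ref{sect:2} of the paper. Take $A=\calA_4^1=(\Z/4\Z,\,x^2/8)$ and $n=2$. Then $A[2]=2A=\{0,2\}$, so $A[2]$ and $A^2$ are the \emph{same} subgroup and do not span $A$. Every consequence you draw then fails in this example: multiplication by $2$ is not surjective on $A^2$ (indeed $2A^2=\{0\}$), the level of $A^2$ is $2$ and hence not coprime to $n$, and the multiplicativity breaks down, since $G(2,A)=2+2i$ while $G(2,A[2])\,G(2,A^2)=4$. Reducing to $p$-components, as in your alternative route, does not repair this: the problematic case is exactly a $p$-group on which $p\mid n$ acts neither invertibly nor as zero, and that is where your argument is silent. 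The decomposition you invoke is only valid when, on each cyclic Jordan constituent, $n$ acts either invertibly or trivially.

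The standard proof (essentially that of \cite[Lemma 3.1]{Bo3}) avoids all of this by squaring the Gauss sum: substituting $\alpha=\beta+\gamma$,
\[
  \abs{G(n,A)}^2=\sum_{\beta,\gamma\in A}e\bigl(nQ(\beta+\gamma)-nQ(\beta)\bigr)
  =\sum_{\gamma\in A}e(nQ(\gamma))\sum_{\beta\in A}e\bigl((n\gamma,\beta)\bigr)
  =\abs{A}\sum_{\gamma\in A[n]}e(nQ(\gamma)),
\]
where the last step uses non-degeneracy of the bilinear form. The triangle inequality gives $\abs{G(n,A)}^2\leq\abs{A}\,\abs{A[n]}$, and for $(n,N)=1$ one has $A[n]=\{0\}$, which also recovers your first assertion without appealing to Milgram's formula.
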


We will also need explicit formulas for the Gauss sums in some cases in Section \ref{sec:anis-discr-forms}.
These are easily proven by relating $G(n,A)$ to the standard Gauss sums
(see, for instance \cite{Str}).
\begin{proposition}
\label{prop:gaussodd}
Let $p > 2$ be a prime. We have $G(n,p^{\pm 1}) = p$ if $p \mid n$ and
\[
  G(n,p^{\pm 1}) = \pm \sqrt{p} \leg{p}{2} \leg{n}{p} e\left(\frac{1-p}{8}\right)
\]
if $(n,p) = 1$.
\end{proposition}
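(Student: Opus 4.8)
The plan is to reduce the Gauss sum of the symbol $p^{\pm 1}$ to a single classical quadratic Gauss sum and then insert its known value. Recall from Definition~\ref{def:genus-symbols}(1) and Lemma~\ref{lem:isomodd} that $p^{\pm 1}$ is the elementary module $\calA_p^t=\bigl(\Z/p\Z,\,tx^2/p\bigr)$ for a representative $t$ with $\leg{2t}{p}=\pm 1$. So it suffices to evaluate $G(n,\calA_p^t)$ for an arbitrary $t$ coprime to $p$ and to record how the sign of the symbol enters through $\leg{t}{p}$.

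First I would dispose of the case $p\mid n$. Here $nQ(x)=\tfrac{n}{p}\,t x^{2}\in\Z$ for every $x\in\Z/p\Z$, so each summand $e(nQ(x))$ equals $1$ and
\[
  G(n,\calA_p^t)=\sum_{x\bmod p} 1 = p,
\]
independently of $t$; this gives the asserted value $p$ for both $p^{+1}$ and $p^{-1}$. For the main case $(n,p)=1$ I would write, using the character-sum reduction $\sum_{x\bmod p}e(ax^2/p)=\leg{a}{p}\,g_p$ valid for $(a,p)=1$ (expand $e(ax^2/p)=\sum_y(1+\leg{y}{p})e(ay/p)$ and note the principal part cancels),
\[
  G(n,\calA_p^t)=\sum_{x\bmod p} e\!\left(\frac{ntx^{2}}{p}\right)=\leg{nt}{p}\,g_p,
  \qquad g_p:=\sum_{x\bmod p} e\!\left(\frac{x^{2}}{p}\right).
\]

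The remaining ingredient is the classical evaluation $g_p=\sqrt p$ for $p\equiv 1\pmod 4$ and $g_p=i\sqrt p$ for $p\equiv 3\pmod 4$, which I would rewrite uniformly as $g_p=\leg{p}{2}\,\sqrt p\,e\!\left(\frac{1-p}{8}\right)$ by checking the four residue classes of $p$ modulo $8$ (this is exactly the relation to the standard Gauss sum referenced from \cite{Str}). Combining with the multiplicativity $\leg{nt}{p}=\leg{n}{p}\leg{t}{p}$ then yields
\[
  G(n,\calA_p^t)=\leg{n}{p}\,\leg{t}{p}\,\leg{p}{2}\,\sqrt p\,e\!\left(\frac{1-p}{8}\right),
\]
and substituting the normalization of the symbol (so that the $\leg{t}{p}$ factor produces the sign $\pm$ attached to $p^{\pm 1}$) gives the stated closed form.

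The only genuinely delicate point is the sign and normalization bookkeeping in this last step: one must be consistent about whether the symbol $p^{\pm 1}$ is indexed by $\leg{t}{p}$ or by $\leg{2t}{p}=\leg{2}{p}\leg{t}{p}$, and correspondingly whether the quadratic-reciprocity factor $\leg{p}{2}=\leg{2}{p}$ from $g_p$ stays explicit or is absorbed into the $\pm$. I would verify this against a small case such as $\calA_3^1$ (where $\leg{2}{3}=-1$, so the module is of type $3^{-1}$) to fix the sign once and for all. Everything else — the reduction to $g_p$, the evaluation of $g_p$, and the passage from $\calA_p^t$ to the symbol via Lemma~\ref{lem:isomodd} — is routine.
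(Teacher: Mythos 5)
Your route is the paper's route: the authors give no proof of Proposition \ref{prop:gaussodd} beyond the remark that these formulas follow by relating $G(n,A)$ to the standard Gauss sums, and your reduction $G(n,\calA_p^t)=\leg{nt}{p}\,g_p$ for $(n,p)=1$, together with the uniform evaluation $g_p=\leg{p}{2}\sqrt{p}\,e\!\left(\tfrac{1-p}{8}\right)$ (checked correctly over the four classes mod $8$) and the trivial case $p\mid n$, is exactly that computation. All of those steps are right.

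The one step you leave open --- passing from $\leg{t}{p}$ to the sign of the symbol --- is, however, precisely where the argument does not close as you assert. By Definition \ref{def:genus-symbols} the sign of $p^{\pm 1}$ is $\leg{2t}{p}=\leg{2}{p}\leg{t}{p}$, so $\leg{t}{p}=\pm\leg{p}{2}$; substituting this into your (correct) identity $G(n,\calA_p^t)=\leg{n}{p}\leg{t}{p}\leg{p}{2}\sqrt{p}\,e\!\left(\tfrac{1-p}{8}\right)$ makes the two factors of $\leg{p}{2}$ cancel, and what you actually obtain is $G(n,p^{\pm 1})=\pm\sqrt{p}\,\leg{n}{p}\,e\!\left(\tfrac{1-p}{8}\right)$, \emph{without} the $\leg{p}{2}$ appearing in the displayed statement. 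The sanity check you propose but defer settles this: $\calA_3^1$ has $G(1,\calA_3^1)=1+2e(1/3)=i\sqrt{3}$ and symbol $3^{-1}$ (as you note, since $\leg{2}{3}=-1$), whereas the displayed formula gives $-\sqrt{3}\,\leg{3}{2}\,e(-1/4)=-i\sqrt{3}$; likewise $\calA_5^2=5^{+1}$ has $G(1,\calA_5^2)=-\sqrt{5}$ against the formula's $+\sqrt{5}$. Both disagreements are by the factor $\leg{p}{2}$, consistent with the cancellation above and with the signatures in Table \ref{tab:anis}. So either the $\pm$ in the proposition is meant to be indexed by $\leg{t}{p}$ (the determinant convention, which conflicts with Definition \ref{def:genus-symbols}) or the factor $\leg{p}{2}$ in the statement is spurious. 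Your derivation is sound; you should carry out the check you postponed, fix the convention explicitly, and record that the closed form you actually derive differs from the one displayed by $\leg{p}{2}$ --- "substituting the normalization of the symbol gives the stated closed form" is not true as written.
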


\begin{proposition}
  \label{prop:gausseven1}
  Let $q = 2^r$. For $n \in \Z$ we put $n' = n/(n,q)$ and $q' = q/(n,q)$. We have
  \[
    G(n,q_{t}^{\pm 1}) = \sqrt{q} \sqrt{(n,q)} \leg{t n'}{q'} \cdot
        \begin{cases}
            e\left( \frac{tn'}{8}\right), & \text{if } q \nmid n,\\
            0, & \text{if } q \mid\mid n,\\
            1, & \text{if } 2q \mid n.
        \end{cases}
  \]
\end{proposition}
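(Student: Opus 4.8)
The plan is to compute the Gauss sum $G(n, q_t^{\pm 1})$ by reducing it to the classical quadratic Gauss sum $\sum_{x \bmod m} e(ax^2/m)$, whose value is known in closed form. By Definition~\ref{elemdf} the module $\calA_{2^k}^t$ carries the quadratic form $Q(x) = tx^2/2^{k+1}$, so by the multiplicativity \eqref{basic3} and the relation between the symbol $q_t^{\pm 1}$ and an orthogonal sum of elementary components $\calA_q^{t_i}$ (Definition~\ref{def:genus-symbols}, together with Lemma~\ref{lem:isomeven}), it suffices to evaluate $G(n, \calA_q^{t})$ for a single elementary $2$-adic component and then track how the invariants $t$ and the sign combine. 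Concretely, for $\calA_q^t$ with $q = 2^r$ I would write
\[
  G(n, \calA_q^t) = \sum_{x \bmod q} e\!\left( \frac{n t x^2}{2q} \right) = \sum_{x \bmod q} e\!\left( \frac{n t x^2}{2^{r+1}} \right),
\]
and recognize the right-hand side as a quadratic Gauss sum with modulus $2^{r+1}$ but summed only over residues modulo $2^r$.

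First I would handle the coprime case $(n, q) = 1$ directly, since then $nt$ is odd and the sum is a standard Gauss sum whose absolute value is $\sqrt{q}$ (consistent with Lemma~\ref{gauss}) and whose argument is controlled by the Jacobi symbol $\leg{nt}{q}$ and a root of unity. The precise phase $e(tn'/8)$ should emerge from the classical evaluation of $\sum_{x \bmod 2^{r+1}} e(ax^2/2^{r+1})$ for odd $a$, which equals $2^{(r+1)/2}$ times an eighth root of unity depending on $a \bmod 8$; restricting the summation range from $2^{r+1}$ to $2^r$ introduces the factor relating $\sqrt{q}$ to $\sqrt{2q}$ and the Jacobi symbol $\leg{tn'}{q'}$. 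Next I would treat the general $n$ by setting $d = (n,q)$, $n' = n/d$, $q' = q/d$, and splitting off the common factor: writing $x$ across residues modulo $q$ and using that $e(ntx^2/2q)$ depends on $x$ through $x \bmod q'$ with multiplicity $d = (n,q)$, one reduces to the coprime case for $n'$ modulo $q'$, which accounts for the overall factor $\sqrt{(n,q)}$. The three cases $q \nmid n$, $q \| n$, and $2q \mid n$ correspond respectively to $n'$ odd (generic phase), $n'$ giving a degenerate vanishing sum, and $n'$ even enough that the exponential is trivial on the relevant range (value $1$).

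The main obstacle I expect is the careful bookkeeping of the half-integral normalization $2q = 2^{r+1}$ in the denominator of $Q$: the extra factor of $2$ means the relevant modulus for the classical Gauss sum is $2^{r+1}$ rather than $q$, so the reduction to standard Gauss sums must correctly match the summation modulus $q = 2^r$ against the form's modulus $2^{r+1}$, and this is precisely where the vanishing case $q \| n$ arises (the sum collapses because the quadratic character becomes trivial on a coset of index $2$ while the linear twist does not, or vice versa). The sign ambiguity $\pm 1$ in $q_t^{\pm 1}$ and the reconciliation of the Jacobi symbol $\leg{tn'}{q'}$ with the sign of the symbol also require attention, but these follow mechanically once the single-component formula is pinned down; I would verify the final normalization against Milgram's formula and against the known special values of $G(1, \calA_q^t)$ as a consistency check.
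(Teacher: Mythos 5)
The paper gives no written proof of Proposition~\ref{prop:gausseven1}; it only remarks that these formulas "are easily proven by relating $G(n,A)$ to the standard Gauss sums," which is exactly the reduction you carry out. Your plan is sound and matches that approach: factoring out $(n,q)$ to reduce to the coprime case, invoking the classical evaluation of $\sum_{x \bmod 2^{m}} e(ax^2/2^{m})$ for odd $a$, and correctly locating the vanishing at $q \parallel n$ in the mismatch between the summation modulus $q$ and the form's modulus $2q$ (where the sum collapses to $\sum_x (-1)^x = 0$); the only superfluous step is invoking \eqref{basic3} and Lemma~\ref{lem:isomeven}, since $q_t^{\pm 1}$ is already a single elementary component $\calA_q^t$.
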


\begin{proposition}
  \label{prop:gausseven2}
  Let $q = 2^r$ with $r \geq 1$.
  We have
  \[
    G(n,q^{-2}) = q (q,n) \leg{3}{q'}
  \]
  and
  \[
    G(n,q^{+2}) = q (q,n).
  \]
\end{proposition}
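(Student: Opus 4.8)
The plan is to treat the two Jordan components separately: the form $q^{+2}=\calC_q$ is handled by a direct evaluation of the defining sum, while $q^{-2}=\calB_q$ is reduced to the Gauss sums of the $\calA_q^t$, which are already recorded in Proposition~\ref{prop:gausseven1}.

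First I would compute $G(n,\calC_q)$ directly from \eqref{defgauss} with $Q(x,y)=xy/q$. Summing over $y$ first gives $\sum_{y\bmod q}e(nxy/q)=q$ when $q\mid nx$ and $0$ otherwise, and the number of $x\bmod q$ with $q\mid nx$ is exactly $(q,n)$. Hence $G(n,q^{+2})=q\,(q,n)$, as claimed.

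For $q^{-2}=\calB_q$ the idea is to exploit Lemma~\ref{lem:isomeven}(5). I fix odd integers $t,t_1,t_2,t_3$ with $t_1+t_2+t_3\equiv t\pmod 8$ and $\leg{t_1}{2}\leg{t_2}{2}\leg{t_3}{2}=-\leg{t}{2}$, so that $\calA_q^{t}\oplus\calB_q\cong\calA_q^{t_1}\oplus\calA_q^{t_2}\oplus\calA_q^{t_3}$. By the multiplicativity \eqref{basic3} of Gauss sums,
\[
  G(n,\calA_q^{t})\,G(n,\calB_q)=G(n,\calA_q^{t_1})\,G(n,\calA_q^{t_2})\,G(n,\calA_q^{t_3}).
\]
When $q\nmid n$, each factor on the right and the factor $G(n,\calA_q^{t})$ are nonzero by the first case of Proposition~\ref{prop:gausseven1} (there $q'\geq 2$ and the relevant Kronecker symbol of an odd number is $\pm1$), so I can solve for $G(n,\calB_q)$. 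Writing $(n,q)=2^{a}$, $n'=n/(n,q)$ odd and $q'=2^{r-a}$, and substituting $G(n,\calA_q^{s})=\sqrt q\,\sqrt{(n,q)}\,\leg{sn'}{q'}\,e(sn'/8)$, the three factors $\sqrt q\,\sqrt{(n,q)}$ in the numerator against one in the denominator produce the magnitude $q\,(q,n)$; the phases combine to $e\bigl(n'(t_1+t_2+t_3-t)/8\bigr)=1$ since $8\mid t_1+t_2+t_3-t$; and the Kronecker symbols collapse---using $\leg{s}{q'}=\leg{s}{2}^{\,r-a}$, $\leg{n'}{q'}^{2}=1$, and the sign condition $\leg{t_1}{2}\leg{t_2}{2}\leg{t_3}{2}=-\leg{t}{2}$---to $(-1)^{r-a}=\leg{3}{q'}$. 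This gives $G(n,q^{-2})=q\,(q,n)\leg{3}{q'}$.

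It remains to dispose of the case $q\mid n$, which is the only place where the division above is illegitimate. There $nQ(\gamma)\in\Z$ for every $\gamma$, so every summand of \eqref{defgauss} equals $1$ and $G(n,\calB_q)=\abs{\calB_q}=q^2$; since then $(q,n)=q$ and $q'=1$ this matches $q\,(q,n)\leg{3}{1}$, and the identical remark re-confirms $G(n,\calC_q)=q^2$. The main obstacle is genuinely the sign in the $\calB_q$ case: its magnitude $q\,(q,n)$ is already forced by $\calB_q\oplus\calB_q\cong\calC_q\oplus\calC_q$ (Lemma~\ref{lem:isomeven}(4)) together with the $\calC_q$ computation, so the real content is the bookkeeping of Kronecker symbols and phases that pins the sign down to $\leg{3}{q'}$.
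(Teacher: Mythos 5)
Your proof is correct. Note that the paper itself offers no argument for Propositions \ref{prop:gaussodd}--\ref{prop:gausseven2}; it only remarks that they are ``easily proven by relating $G(n,A)$ to the standard Gauss sums'' and points to [Str]. Your route is therefore genuinely different from the implied one: for $\calC_q$ you avoid Gauss sums altogether by the elementary character-sum count (summing over $y$ first and counting the $(q,n)$ values of $x$ with $q\mid nx$), and for $\calB_q$ you sidestep any direct evaluation of a binary $2$-adic Gauss sum by combining the isomorphism $\calA_q^{t}\oplus\calB_q\cong\calA_q^{t_1}\oplus\calA_q^{t_2}\oplus\calA_q^{t_3}$ of Lemma \ref{lem:isomeven}(5) with the multiplicativity \eqref{basic3} and the already-known rank-one formula of Proposition \ref{prop:gausseven1}. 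The bookkeeping is right: the moduli give $q(q,n)$, the phases cancel because $8\mid t_1+t_2+t_3-t$, and the Kronecker symbols collapse to $(-1)^{r-a}=\leg{3}{q'}$ via $\prod_i\leg{t_i}{2}=-\leg{t}{2}$; the division is legitimate exactly when $q\nmid n$, and you correctly dispose of $q\mid n$ by periodicity of the Gauss sum modulo the level. What your approach buys is a self-contained proof depending only on statements already in the paper, and one that never uses the explicit quadratic form on $\calB_q$ beyond its level $q$ (which incidentally makes it immune to the misprint $x^2+2xy+y^2$ in Definition \ref{elemdf}, where $x^2+xy+y^2$ must be meant, since the printed form is degenerate); what it costs is reliance on Lemma \ref{lem:isomeven}(5), whereas the reference-based route would compute everything from scratch.
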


The following theorem is used later on to decide when two given genus symbols
correspond to isomorphic finite quadratic modules.

\begin{theorem}[\cite{Sk3}] \label{thm:disc-iso}
  Let $A$ and $B$ be finite quadratic modules.
  Then $A$ and $B$ are isomorphic if and only if their underlying
  abelian groups are isomorphic and
  \[
       G(n,A) = G(n,B)
  \]
  for all divisors $n$ of the level of $A$ and $B$.
\end{theorem}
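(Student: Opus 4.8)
The plan is to establish the easy implication by inspection and then reduce the converse to a purely local computation at each prime, where the explicit Gauss sum evaluations recorded above carry the argument. The forward direction is immediate: an isomorphism $\varphi\colon A\to B$ of finite quadratic modules is an isomorphism of abelian groups satisfying $Q_B(\varphi(\gamma))=Q_A(\gamma)$, so it preserves every $G(n,\cdot)$; hence the two conditions on the right are necessary. For the converse I assume the underlying groups are isomorphic and that $G(n,A)=G(n,B)$ for all $n$ dividing the (common, after using the periodicity \eqref{basic2}) level $N$. Decomposing $A\cong\bigoplus_p A_p$ and $B\cong\bigoplus_p B_p$ into $p$-components via Theorem~\ref{thm:jordan}, multiplicativity \eqref{basic3} gives $G(n,A)=\prod_p G(n,A_p)$, and each factor depends on $n$ only modulo the $p$-part $N_p$ of $N$. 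The goal is to prove $A_p\cong B_p$ for every $p$.

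First I would decouple the primes. Testing at $n=p^{j}$, the factors $G(p^{j},A_q)$ with $q\neq p$ are Gauss sums at arguments coprime to $N_q$, hence of absolute value $\sqrt{|A_q|}$ by Lemma~\ref{gauss}; since $|A_q|$ is fixed by the group, taking absolute values yields $|G(p^{j},A_p)|=|G(p^{j},B_p)|$ for all $j$ at once. For the phases I would pass to the normalized ratios $G(p^{j},A)/G(1,A)$: for odd $q$ the quotient $G(p^{j},A_q)/G(1,A_q)$ equals a power of the Legendre symbol $\leg{p^{j}}{q}$ by Proposition~\ref{prop:gaussodd}, an expression depending only on the group $A_q$ and therefore common to $A$ and $B$. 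Dividing out these common factors isolates the normalized local Gauss sums $G(p^{j},A_p)/G(1,A_p)$, while Milgram's formula applied to $A$ and its components fixes the remaining overall root of unity through $\sig(A)\equiv\sig(B)\pmod 8$. In this way the global data determines, prime by prime, the local Gauss sums $\{G(p^{j},A_p)\}_{j}$.

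Second, at each prime I would read off the Jordan invariants. For odd $p$ the group already fixes, for every scale $p^{r}$, the number of summands $\Z/p^{r}\Z$, and by Lemma~\ref{lem:isomodd} the only remaining invariant of that scale is its sign $\leg{2t}{p}$; evaluating $G(p^{j},A_p)$ annihilates all components of scale dividing $p^{j}$ and, by Proposition~\ref{prop:gaussodd}, exhibits the sign of the leading surviving scale as the $\pm$ in front. Peeling off scales from the bottom recovers all signs, so Theorem~\ref{thm:jordan} gives $A_p\cong B_p$. The hard part will be $p=2$: here the Jordan decomposition is not unique (see the remark after Definition~\ref{def:genus-symbols}), so no invariant can simply be read off a symbol. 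Instead, for each scale $2^{r}$ one must recover the type (whether the odd pieces $\calA_{2^{r}}^{t}$ occur or the scale is built only from $\calB_{2^{r}},\calC_{2^{r}}$), the sign, and, in the odd case, the oddity $t\bmod 8$, quantities linked by the relations of Lemma~\ref{lem:isomeven} and by Milgram's formula. Using the explicit values in Propositions~\ref{prop:gausseven1} and \ref{prop:gausseven2}, and again peeling off lower scales with $n=2^{j}$ while using odd test values to detect the $\leg{tn'}{q'}$ and $e(tn'/8)$ factors, I would show scale by scale that the Gauss sums determine exactly this data; normalizing any two Jordan symbols with equal Gauss sums to a common form via Lemma~\ref{lem:isomeven} then yields $A_2\cong B_2$. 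Disentangling type, sign and oddity from Propositions~\ref{prop:gausseven1}--\ref{prop:gausseven2} in the presence of this $2$-adic non-uniqueness, together with the phase bookkeeping tying the local data to the global signature, is where the real work lies.
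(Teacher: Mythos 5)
Your proposal follows essentially the same route as the paper: necessity by inspection, reduction to $p$-components via multiplicativity of the Gauss sums, the odd-prime case settled by Lemma~\ref{lem:isomodd} together with the explicit evaluation in Proposition~\ref{prop:gaussodd}, and the prime $2$ identified as the genuinely hard case. The paper does not carry out that $2$-adic analysis either but defers it to \cite{Sk3}, so your admission that this is ``where the real work lies'' leaves you at the same level of completeness as the text itself.
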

\begin{proof}
  It is clear that the condition $G(n,A) = G(n,B)$ is necessary.
  Using this, it is easy to prove the theorem for $p$-components with $p > 2$.
  If $A_p$ and $B_p$ are finite quadratic modules of prime power level $p^{r}$ with $G(p^{i},A_p) = G(p^{i},B_p)$
  for all $i \in {0, \ldots, r}$, then $A_p \cong B_p$ follows from Lemma \ref{lem:isomodd} and the explicit formula
  for the Gauss sum in Proposition \ref{prop:gaussodd}.
  The general case is treated in \cite{Sk3} in detail.
\end{proof}

For $s\in \C$ we define a divisor sum $\sigma(s,A)$ associated to $A$ by
\begin{align}
  \label{eq:divsum}
  \sigma(s,A) = \sum_{a\mid N} a^s \sqrt{|A[a]|}.
\end{align}
Here the sum runs over all positive divisors $a$ of the level $N$ of $A$.
If $B$ is another finite quadratic module of level $N'$ coprime to $N$, then
\[
\sigma(s,A\oplus B)= \sigma(s,A)\sigma(s,B).
\]
Consequently, $\sigma(s,A)$ is the product of the $\sigma(s,A_p)$ for $p$ running through the primes dividing $N$.

\begin{lemma}\label{lem:sigest}
For $s\in \R$ we have the estimate
\[
\sigma(s,A)\leq \sqrt{\frac{2|A|}{N}} \cdot\sigma_{s+1/2}(N),
\]

where $\sigma_s(N)= \sum_{a\mid N} a^s$ denotes the usual divisor sum.
\end{lemma}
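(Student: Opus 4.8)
The plan is to reduce the claim to a term-by-term application of the bound \eqref{lnest}, which is the only substantive input. Recall that $\sigma(s,A)=\sum_{a\mid N}a^{s}\sqrt{|A[a]|}$, where the sum runs over the positive divisors $a$ of the level $N$. The estimate \eqref{lnest}, applied with $n=a$ for each such divisor, gives $|A[a]|\le 2a\,|A|/N$. Since both sides are nonnegative, I would take square roots to obtain, for every $a\mid N$,
\[
  \sqrt{|A[a]|}\;\le\;\sqrt{\frac{2a\,|A|}{N}}\;=\;\sqrt{\frac{2|A|}{N}}\cdot a^{1/2}.
\]

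I would then multiply this inequality by the factor $a^{s}$, which is a nonnegative real number because $s\in\R$ (so the direction of the inequality is preserved on each summand), and sum over all divisors $a$ of $N$. Factoring the constant $\sqrt{2|A|/N}$ out of the sum yields
\[
  \sigma(s,A)=\sum_{a\mid N}a^{s}\sqrt{|A[a]|}
    \;\le\;\sqrt{\frac{2|A|}{N}}\sum_{a\mid N}a^{s+1/2}
    \;=\;\sqrt{\frac{2|A|}{N}}\cdot\sigma_{s+1/2}(N),
\]
which is precisely the asserted inequality, the last equality being the definition of the ordinary divisor sum $\sigma_{s+1/2}(N)=\sum_{a\mid N}a^{s+1/2}$.

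There is essentially no obstacle here: the entire content of the estimate is carried by \eqref{lnest}, and the remaining manipulation is a routine monotone comparison of finite sums together with pulling out a constant. The two minor points to be careful about are that $s$ is real (ensuring $a^{s}\ge 0$, so that multiplying the pointwise bound by $a^{s}$ and summing is legitimate) and that each $a$ appearing is indeed a divisor of $N$, so that \eqref{lnest} applies with $n=a$. Both are automatic from the definition of $\sigma(s,A)$, so the proof is immediate once \eqref{lnest} is in hand.
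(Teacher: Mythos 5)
Your proof is correct and is exactly the argument the paper intends: it simply states that the lemma is a direct consequence of \eqref{lnest}, and your term-by-term application of that bound, followed by taking square roots and summing over the divisors of $N$, is the intended elaboration. No issues.
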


\begin{proof}
This is a direct consequence of the estimate \eqref{lnest}.
\end{proof}

\section{Vector valued modular forms}
\label{sect:3}

We write $\Mp_2(\Z)$ \label{bi1} for the metaplectic extension of $\Sl_2(\Z)$,
realized as the group of pairs
$(M,\phi(\tau))$, where $M=\kabcd\in\Sl_2(\Z)$ and $\phi$
is a holomorphic function on the upper complex half plane $\H$ with $\phi(\tau)^2=c\tau+d$ (see e.~g.~\cite{Bo1}, \cite{Br1}).
It is well known that $\Mp_2(\Z)$ is generated by
\[
T= \left( \zxz{1}{1}{0}{1}, 1\right)\qquad\text{and}\qquad S= \left(
  \zxz{0}{-1}{1}{0}, \sqrt{\tau}\right).
\]
One has the relations $S^2=(ST)^3=Z$, where $Z=\left(
  \kzxz{-1}{0}{0}{-1}, i\right)$ is the standard generator of the
center of $\Mp_2(\Z)$.

The Weil representation associated with $A$ is a unitary representation $\rho_A$ of
$\Mp_2(\Z)$ on the group
algebra $\C[A]$. If we denote the standard basis of
$\C[A]$ by $(\frake_\gamma)_{\gamma\in A}$ then $\rho_A$ can
be defined by the action of the generators $S,T\in\Mp_2(\Z)$ as
follows (see also \cite{Sk2}, \cite{Bo1}, \cite{Br1}, where the dual of $\rho_A$
is used):
\begin{align}\label{actionT}
  \rho_A(T)\frake_\gamma &= e(-Q(\gamma)) \frake_\gamma,\\
  \label{actionS}
  \rho_A(S)\frake_\gamma &= \frac{ e(\sig(A)/8)}{\sqrt{|A|}}
  \sum_{\delta\in A} e((\gamma,\delta)) \frake_\delta.
\end{align}

Let $k\in \tfrac{1}{2}\Z$.  We denote by $M_{k,A}$ the vector space of
$\C[A]$-valued modular forms of weight $k$ with representation
$\rho_A$ for the group $\Mp_2(\Z)$. The subspace of cusp forms is
denoted by $S_{k,A}$.  It is
easily seen that $M_{k,A}=0$, if $2k\not\equiv \sig(A) \pmod{2}$.

The dimension of the vector space $M_{k,A}$ can be computed using the
Riemann-Roch theorem or the Selberg trace formula.  This is carried
out in \cite{Fr} and \cite{Fi} in a more general situation. In our
special case the following formula holds (see \cite{Bo2} p.~228 and
\cite{Fr} Chapter~8.5, Theorem~5.1). For simplicity we assume that
$2k\equiv
-\sig(A)\pmod{4}$, since our application to simple lattices will only concern this case.
Then the
$d$-dimensional subspace
$W=\Span\{\frake_\gamma+\frake_{-\gamma};\; \gamma\in A\}$ of
$\C[A]$ is preserved by $\rho_A$, and $\rho_A(Z)$ acts by
multiplication with $e(-k/2)$ on $W$.  We denote by $\rho$ the
restriction of $\rho_A$ to $W$.  If $M$ is a unitary matrix of size
$d$ with eigenvalues $e(\nu_j)$ and $0\leq \nu_j<1$ (for
$j=1,\dots,d$), we define
\[
\alpha(M)=\sum_{j=1}^d \nu_j.
\]
If $k\geq \tfrac{3}{2}$, the dimension of $M_{k,A}$ is given by
\begin{align}\label{dim1}
  \dim( M_{k,A} )& = d+dk/12-\alpha\left(e^{\pi i k/2}\rho(S)\right) - \alpha\left(\left( e^{\pi i k/3}\rho(ST)\right)^{-1}\right) -\alpha(\rho(T))\\
  \nonumber
  &\phantom{=} {}+\dim( S_{2-k,A(-1)} ).
\end{align}
Furthermore, the dimension of $S_{k,A}$ is given by
\begin{align}\label{eq:dimcusp}
  \dim( S_{k,A} ) =  \text{first line of } \eqref{dim1} - \left|\left\{\gamma\in A /\{\pm 1\}; \; Q(\gamma)\in  \Z \right\}\right|
+\dim( M_{2-k,A(-1)} ).
\end{align}
Here $A(-1)$ denotes the finite quadratic module given by the abelian group $A$ equipped with the quadratic form $-Q$.
If $k>2$, then $M_{2-k,A(-1)}$ vanishes. If $k=2$, then
$M_{0,A(-1)}$ is equal to the space of $\Mp_2(\Z)$-invariants in $\C[A]$ for the dual representation
of $\rho_A$. Finally, when $k=\tfrac{3}{2}$, according to the Serre-Stark theorem, the space $M_{\frac{1}{2},A(-1)}$ is generated by unary theta series. It was explicitly computed by Skoruppa in \cite{Sk1} and \cite{Sk2} as follows. For every non-zero $l\in\Z$ we write $V(l)$ for the finite quadratic module of level $4|l|$ given by $\Z/2l\Z$ equipped with the quadratic form $Q(x)=\frac{1}{4l}x^2$. Let $\epsilon$ be the automorphism of $V(l)$ given by multiplication by $-1$, and write $\C[V(l)]^\epsilon$ for the corresponding space of invariants.
According to \cite[Theorem 8]{Sk2} we have
\begin{align}
\label{eq:dimwgt1/2}
M_{\frac{1}{2},A(-1)} \cong\bigoplus_{\substack{l>0,\; 4l\mid N\\ \text{$N/4l$ squarefree}}} \big(\C[V(-l)]^\epsilon\otimes \C[A(-1)]\big)^{\Mp_2(\Z)}.
\end{align}
Here the action of $\Mp_2(\Z)$ on the tensor products on the right hand side is given by the Weil representation.

\section{Dimension estimates}

\label{sect:de}

In this section we derive lower bounds
for the dimension of $S_{k,A}$. In view of (\ref{dim1}) and
(\ref{eq:dimcusp}) we have to estimate the quantities
\begin{align*}
  \alpha_1 &:= \alpha\left(e^{\pi i k/2}\rho(S)\right),\\
  \alpha_2 &:= \alpha\left(\left( e^{\pi i k/3}\rho(ST)\right)^{-1}\right),\\
  \alpha_3 &:= \alpha(\rho(T)), \\
  \alpha_4 &:= \big|\left\{\gamma\in A/\{\pm 1\}; \;
    Q(\gamma)\in \Z \right\}\big|.
\end{align*}
We begin by recalling some trivial bounds from \cite[Bemerkungen 2.2.1 and 2.2.5]{Bu}.
We have
\begin{align*}
  \alpha_1 \leq \frac{1}{2}d, \qquad
  \alpha_2 \leq \frac{2}{3}d, \qquad
  \alpha_3 +\alpha_4\leq d.
\end{align*}
If we insert these bounds into \eqref{eq:dimcusp} we obtain the following corollary.

\begin{corollary}\label{cor:kbound}
If $k> 14$ and $2k\equiv -\sig(A)\pmod{4}$, then $S_{k,A}\neq \{0\}$.
\end{corollary}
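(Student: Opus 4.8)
The plan is to substitute the stated trivial bounds directly into the cusp form dimension formula \eqref{eq:dimcusp} and to check that the resulting lower bound is strictly positive as soon as $k>14$. Recall that the first line of \eqref{dim1} equals $d+dk/12-\alpha_1-\alpha_2-\alpha_3$, where $d=|A/\{\pm1\}|=\dim W$, so that
\[
\dim(S_{k,A}) = d+\frac{dk}{12}-\alpha_1-\alpha_2-\alpha_3-\alpha_4+\dim(M_{2-k,A(-1)}).
\]

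First I would discard the last term: since $k>14>2$ we have $M_{2-k,A(-1)}=0$, and in any case its dimension is non-negative, so dropping it only weakens the bound. Next I would apply the bounds $\alpha_1\le \tfrac12 d$ and $\alpha_2\le \tfrac23 d$, and crucially group $\alpha_3$ with $\alpha_4$ so as to use $\alpha_3+\alpha_4\le d$ (it is their sum, not the individual quantities, that is controlled this cleanly). This produces
\[
\dim(S_{k,A}) \;\ge\; d+\frac{dk}{12}-\frac12 d-\frac23 d-d \;=\; d\cdot\frac{k-14}{12}.
\]

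Finally I would conclude by positivity and integrality. Since $A$ always contains $0$, we have $d\ge 1$, and for $k>14$ the right-hand side $d(k-14)/12$ is strictly positive. As $\dim(S_{k,A})$ is a non-negative integer bounded below by a positive real number, it must be at least $1$, whence $S_{k,A}\neq\{0\}$. I do not expect a genuine obstacle here; the computation is a one-line substitution. The only points deserving care are that the $M_{2-k,A(-1)}$ contribution is harmless (non-negative, and in fact zero in this weight range), that the half-integral $k$ satisfying $2k\equiv-\sig(A)\pmod 4$ is admissible for \eqref{dim1} because $k\ge\tfrac32$, and that one bounds the combined quantity $\alpha_3+\alpha_4$ rather than each term separately.
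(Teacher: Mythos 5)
Your proposal is correct and is exactly the paper's argument: the authors prove the corollary by inserting the trivial bounds $\alpha_1\le\tfrac12 d$, $\alpha_2\le\tfrac23 d$, $\alpha_3+\alpha_4\le d$ into \eqref{eq:dimcusp} to obtain $\dim(S_{k,A})\ge d(k-14)/12>0$. Your additional remarks on discarding the non-negative $\dim(M_{2-k,A(-1)})$ term and on the admissibility of the dimension formula are accurate and just make explicit what the paper leaves implicit.
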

Note that this bound on $k$ is sharp, since there are no nontrivial scalar valued cusp forms of weight $14$ for $\SL_2(\Z)$.

To prove the existence of non-trivial cusp forms for smaller values of $k$ by means of the dimension
formula, we need much better estimates for the $\alpha_i$.
The quantities $\alpha_1$ and $\alpha_2$ can be expressed in terms of Gauss sums
associated with $A$. By means of the estimate in Lemma \ref{gauss},
we obtain the following result (see Lemma 2 and Corollary 3 in \cite{Br:Pic}).

\begin{lemma}
  \label{a1a2est}
  The quantities $\alpha_1$ and $\alpha_2$ satisfy the estimates
  \begin{align}
    \label{a1est}
    |\alpha_1 -d/4 | & \leq \frac{1}{4}\sqrt{|A[2]|},\\
    \label{a2est}
    |\alpha_2 -d/3 | & \leq \frac{1}{3\sqrt{3}} \left( 1 +
      \sqrt{|A[3]|} \right).
  \end{align}
\end{lemma}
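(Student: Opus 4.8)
The plan is to exploit the fact that the matrices underlying $\alpha_1$ and $\alpha_2$ have very small order, so that the sum of the arguments of their eigenvalues can be extracted from a single trace. Since $S^2=(ST)^3=Z$ and $\rho(Z)$ acts on $W$ by the scalar $e(-k/2)$, the unitary matrices
\[
U:=e^{\pi i k/2}\rho(S),\qquad V_0:=e^{\pi i k/3}\rho(ST)
\]
satisfy $U^2=\id$ and $V_0^{3}=\id$ (here $e^{\pi i k/2}=e(k/4)$ and $e^{\pi i k/3}=e(k/6)$, and one uses $e(k/2)e(-k/2)=1$, $e(k/2)e(-k/2)=1$ respectively). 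Thus $U$ has eigenvalues $\pm1=e(0),e(1/2)$ and real trace, so counting the $(-1)$-eigenspace gives $\alpha_1=\tfrac14(d-\tr U)$, i.e. $\alpha_1-\tfrac d4=-\tfrac14\tr U$. Likewise $V:=V_0^{-1}$ has $V^3=\id$ with eigenvalues in $\{1,\omega,\omega^2\}$, $\omega=e(1/3)$; interpolating $\nu\mapsto\nu$ on $\{0,\tfrac13,\tfrac23\}$ as $a+b\,e(\nu)+c\,e(2\nu)$ and summing over eigenvalues, together with $\tr(V^2)=\tr(V^{-1})=\overline{\tr V}$, expresses $\alpha_2-\tfrac d3$ as a combination of $\tr V$ and $\overline{\tr V}$ whose coefficients have modulus $\tfrac{1}{3\sqrt3}$, so that $\bigl|\alpha_2-\tfrac d3\bigr|\le\tfrac{2}{3\sqrt3}\,|\tr V|$.

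The second step is to compute these traces over the symmetric subspace $W$ and identify them with Gauss sums. I would use the involution $\iota\colon\frake_\gamma\mapsto\frake_{-\gamma}$, which commutes with $\rho_A(S)$ and $\rho_A(ST)$, so that the orthogonal projection onto $W$ is $\tfrac12(\id+\iota)$ and $\tr_W(X)=\tfrac12\tr_{\C[A]}(X)+\tfrac12\tr_{\C[A]}(\iota X)$. Reading off diagonal entries from \eqref{actionT} and \eqref{actionS}, the $\frake_\gamma$-coefficient of $\rho_A(S)\frake_\gamma$ is $\tfrac{e(\sig(A)/8)}{\sqrt{|A|}}e(2Q(\gamma))$ and that of $\iota\rho_A(S)\frake_\gamma$ is $\tfrac{e(\sig(A)/8)}{\sqrt{|A|}}e(-2Q(\gamma))$, whence
\[
\tr_W(\rho(S))=\frac{e(\sig(A)/8)}{2\sqrt{|A|}}\bigl(G(2,A)+G(-2,A)\bigr)=\frac{e(\sig(A)/8)}{\sqrt{|A|}}\operatorname{Re}G(2,A).
\]
The analogous computation for $ST$, where the two diagonal contributions are $\tfrac{e(\sig(A)/8)}{\sqrt{|A|}}e(Q(\gamma))$ and $\tfrac{e(\sig(A)/8)}{\sqrt{|A|}}e(-3Q(\gamma))$, yields
\[
\tr_W(\rho(ST))=\frac{e(\sig(A)/8)}{2\sqrt{|A|}}\bigl(G(1,A)+G(-3,A)\bigr).
\]

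Finally I would invoke Lemma \ref{gauss}. For $\alpha_1$, since $|\tr U|=|\tr_W(\rho(S))|\le\tfrac{1}{\sqrt{|A|}}|G(2,A)|\le\sqrt{|A[2]|}$, the identity $\alpha_1-\tfrac d4=-\tfrac14\tr U$ gives \eqref{a1est}. For $\alpha_2$, note $|\tr V|=|\tr_W(\rho(ST))|\le\tfrac{1}{2\sqrt{|A|}}\bigl(|G(1,A)|+|G(-3,A)|\bigr)$; here $(1,N)=1$ forces $|G(1,A)|=\sqrt{|A|}$, while $|G(-3,A)|\le\sqrt{|A|}\sqrt{|A[3]|}$, so $|\tr V|\le\tfrac12\bigl(1+\sqrt{|A[3]|}\bigr)$, and combining with the factor $\tfrac{2}{3\sqrt3}$ produces exactly \eqref{a2est}.

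The one genuinely delicate point is the restriction to the symmetric subspace $W$: the $\alpha_i$ are traces over $W$, not over all of $\C[A]$, and it is the projection $\tfrac12(\id+\iota)$ that introduces the ``reflected'' Gauss sums $G(-2,A)$ and $G(-3,A)$. The term $G(-3,A)$ is precisely what is \emph{not} controlled by $(n,N)=1$, so the crux is that Lemma \ref{gauss} still bounds it by $\sqrt{|A|}\sqrt{|A[3]|}$; this is what accounts for the $\sqrt{|A[3]|}$ on the right of \eqref{a2est}, while the separate $G(1,A)$ contribution is responsible for the additive constant $1$. Everything else is bookkeeping with unit-modulus scalars.
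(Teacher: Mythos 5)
Your proof is correct and takes essentially the same route as the paper's source for this lemma: the paper does not prove it in-line but cites \cite{Br:Pic} (Lemma~2 and Corollary~3), where, just as you do, $\alpha_1$ and $\alpha_2$ are extracted from the traces of the finite-order unitaries $e^{\pi i k/2}\rho(S)$ and $e^{\pi i k/3}\rho(ST)$ on the symmetric subspace $W$, these traces are identified with the Gauss sums $G(2,A)$ and $G(1,A)$, $G(-3,A)$, and the bounds of Lemma~\ref{gauss} are applied. Your eigenvalue-interpolation bookkeeping (the coefficients of modulus $\tfrac{1}{3\sqrt 3}$) and the projection $\tfrac12(\id+\iota)$ reproducing the reflected Gauss sums are exactly the ingredients of that argument.
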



\begin{lemma}\label{a4}
  We have
  \[
  |\alpha_4|\leq \frac{|A[2]|}{2} +\frac{\sqrt{|A|}}{2} \sigma(-1,A),
  \]
  where $\sigma(-1,A)$ is the divisor sum defined in \eqref{eq:divsum}.
\end{lemma}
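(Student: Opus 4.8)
The plan is to count the elements of $A$ on which $Q$ is integral by means of an orthogonality relation that detects integrality, pass to the quotient by $\{\pm 1\}$, and then bound the resulting Gauss sums with Lemma~\ref{gauss}. Note that $\alpha_4\ge 0$, so $|\alpha_4|=\alpha_4$.

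First I would separate the quotient count from the element count. Put $S=\{\gamma\in A\mid Q(\gamma)\in\Z\}$. Since $Q(-\gamma)=Q(\gamma)$, the set $S$ is stable under $\gamma\mapsto-\gamma$, and the fixed points of this involution on $A$ are exactly $A[2]$. Each orbit $\{\gamma,-\gamma\}$ therefore has two elements unless $\gamma\in A[2]$, so $\alpha_4=|S/\{\pm1\}|=\tfrac12\bigl(|S|+|S\cap A[2]|\bigr)\le\tfrac12|S|+\tfrac12|A[2]|$. It thus remains to prove $|S|\le\sqrt{|A|}\,\sigma(-1,A)$.

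Next I would write $|S|$ as a sum of Gauss sums. As $NQ(\gamma)\in\Z$ for every $\gamma$, the indicator of the condition $Q(\gamma)\in\Z$ is $\tfrac1N\sum_{n=0}^{N-1}e(nQ(\gamma))$. Summing over $\gamma$ and exchanging the order of summation gives $|S|=\tfrac1N\sum_{n=0}^{N-1}G(n,A)$ with $G(n,A)$ as in \eqref{defgauss}; since $|S|\ge 0$ is real we may estimate $|S|\le\tfrac1N\sum_{n=0}^{N-1}|G(n,A)|$ and apply Lemma~\ref{gauss} to obtain $|S|\le\tfrac{\sqrt{|A|}}{N}\sum_{n=0}^{N-1}\sqrt{|A[n]|}$. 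Finally I would regroup this sum according to $d=\gcd(n,N)$. The exponent of $A$ divides its level $N$ (as one checks on the elementary modules of Definition~\ref{elemdf}), so $A[n]=A[\gcd(n,N)]$, and for each divisor $d\mid N$ there are exactly $\varphi(N/d)$ residues $n$ with $\gcd(n,N)=d$, where $\varphi$ is Euler's totient function. Hence $\sum_{n=0}^{N-1}\sqrt{|A[n]|}=\sum_{d\mid N}\varphi(N/d)\sqrt{|A[d]|}$, and using $\varphi(N/d)\le N/d$ termwise together with the definition \eqref{eq:divsum} of $\sigma(-1,A)$ yields $|S|\le\sqrt{|A|}\,\sigma(-1,A)$, as required.

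The main obstacle is the bookkeeping in this last regrouping: one must justify $A[n]=A[\gcd(n,N)]$, which rests on the exponent of $A$ dividing its level, and then verify that the grouped sum is dominated termwise by $\sigma(-1,A)$ via the elementary inequality $d\,\varphi(N/d)\le N$. Everything else---the orbit count and the passage to Gauss sums---is routine once the orthogonality relation is in place.
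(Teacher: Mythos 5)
Your proof is correct and follows essentially the same route as the paper: split $\alpha_4=\tfrac12(|S|+|S\cap A[2]|)$ via the $\pm1$ involution, detect integrality of $Q$ by averaging $e(nQ(\gamma))$ over $n$ modulo $N$ to express $|S|$ through Gauss sums, bound these by Lemma~\ref{gauss}, and regroup by $\gcd(n,N)$ using $\varphi(N/d)\le N/d$ to reach $\sigma(-1,A)$. Your explicit justification that $A[n]=A[\gcd(n,N)]$ (via the exponent dividing the level) is a point the paper passes over silently, so it is a welcome addition rather than a deviation.
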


\begin{proof}
  We write $\alpha_4$ as
  \[
  \alpha_4 = \frac{1}{2}\sum_{\substack{\gamma\in A[2]\\
      Q(\gamma)\in\Z}} 1 +\frac{1}{2}\sum_{\substack{\gamma\in A\\
      Q(\gamma)\in\Z}} 1.
  \]
  The second term on the right hand side is equal to
  \[
  \frac{1}{2N}\sum_{\gamma\in A}\sum_{\nu\,(N)} e(Q(\gamma)\nu)
  =\frac{1}{2N} \sum_{\nu\,(N)} G(\nu,A).
  \]
  Using Lemma \ref{gauss}, we obtain
  \begin{align*}
    |\alpha_4|&\leq \frac{|A[2]|}{2} + \frac{1}{2N} \sum_{\nu\,(N)} \sqrt{|A|}\sqrt{|A[\nu]|}\\
    &\leq \frac{|A[2]|}{2} + \frac{\sqrt{|A|}}{2N} \sum_{a\mid N}\sum_{\substack{\mu\,(N/a)\\ (\mu,N/a)=1}}\sqrt{|A[a\mu]|}\\
    &\leq \frac{|A[2]|}{2} + \frac{\sqrt{|A|}}{2N} \sum_{a|N} \frac{N}{a} \sqrt{|A[a]|}\\
    &\leq \frac{|A[2]|}{2} + \frac{\sqrt{|A|}}{2} \sigma(-1,A).
  \end{align*}
  This concludes the proof of the lemma.
\end{proof}


Before we consider $\alpha_3$, we introduce some additional notation. If
$x\in \R$, we write $[x]=\max\{n\in\Z;\; n\leq x\}$ for the greatest-integer function. Moreover, we let
\begin{equation}\label{b1}
  \B(x)=x-\tfrac{1}{2}([x]-[-x]).
\end{equation}
be the $1$-periodic function on $\R$ with $\B(x)=0$ for
$x=0,1$ and $\B(x)=x-1/2$ for $0<x<1$.  By definition
\[
\alpha_3 = \sum_{\gamma\in A/\{\pm 1\}} \left(
  -Q(\gamma)-[-Q(\gamma)]\right).
\]
Using $\B(x)$ and $\alpha_4$ we may rewrite this in the form
\begin{align*}
\alpha_3 &= \frac{d}{2}-\frac{\alpha_4}{2} -\sum_{\gamma\in A/\{\pm
  1\}}\B(Q(\gamma))\\
  &= \frac{d}{2}-\frac{\alpha_4}{2}-\frac{1}{2}\sum_{\gamma\in A[2]}\B(Q(\gamma)) -
\frac{\beta}{2},
\end{align*}
where
\begin{align}
  \label{a5'}
  \beta &= \sum_{\gamma\in A}\B(Q(\gamma)).
\end{align}
For $\gamma\in A[2]$ we have $Q(\gamma)\in \frac{1}{4}\Z$,
and therefore $|\B(Q(\gamma))|\leq 1/4$. Hence
\begin{align}
  \label{a35'}
  |\alpha_3 - d/2+\alpha_4/2| & \leq |A[2]|/8 + |\beta|/2.
\end{align}


\begin{lemma}\label{a5'est}
  The quantity $\beta$ satisfies
  \[
  |\beta|\leq \frac{\sqrt{|A|}}{\pi}\left( \frac{3}{2} +\ln(N)\right)
  \left(\sigma(-1,A)-\frac{\sqrt{|A|}}{N}\right).
  \]
\end{lemma}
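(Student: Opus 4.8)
The plan is to expand the sawtooth function $\B$ into its Fourier series, turn $\beta$ into a harmonic-type sum of imaginary parts of Gauss sums, and then exploit a cancellation that tames the resulting conditionally convergent series.

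First I would record the Fourier expansion of the $1$-periodic sawtooth function,
\[
  \B(x) = -\frac{1}{\pi}\sum_{n\geq 1}\frac{\sin(2\pi n x)}{n},
\]
which converges pointwise to $\B(x)$ at every real $x$ (the value $0$ at integers is exactly the average of the one-sided limits, matching $\B(0)=0$). Since $A$ is finite, I may interchange the finite sum over $\gamma\in A$ with the limit of the partial Fourier sums. Using $\sum_{\gamma\in A}\sin(2\pi n Q(\gamma)) = \operatorname{Im} G(n,A)$ this yields $\beta = -\frac{1}{\pi}\sum_{n\geq 1}\operatorname{Im} G(n,A)/n$. Write $c_n=\operatorname{Im} G(n,A)$. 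By \eqref{basic2} the sequence $c_n$ is periodic modulo $N$, and $c_N=0$ because $G(N,A)=G(0,A)=|A|$ is real. The crucial observation is that $\sum_{r=1}^{N} c_r=0$: indeed $\sum_{r=1}^{N} G(r,A)=\sum_{\gamma\in A}\sum_{r=1}^{N}e(rQ(\gamma))$, and for each $\gamma$ the inner geometric sum equals $N$ if $Q(\gamma)\in\Z$ and $0$ otherwise, so $\sum_{r=1}^N G(r,A)$ is real.

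Next I would use this zero-sum property to control the conditionally convergent series. Grouping $n=jN+r$ with $j\geq 0$ and $1\leq r\leq N$, and subtracting the constant $\frac{1}{(j+1)N}$ inside each block (legitimate since $\sum_{r} c_r=0$), I obtain
\[
  \sum_{n\geq 1}\frac{c_n}{n} = \sum_{r=1}^{N-1} c_r\, w_r,\qquad w_r=\sum_{j=0}^{\infty}\frac{N-r}{(jN+r)(j+1)N}\ \geq 0.
\]
Pulling out the factor $\frac{N-r}{N}$, the $j=0$ term contributes $\frac1r$ and the $j\geq 1$ terms are bounded by $\frac{1}{jN(j+1)}$, whose sum over $j\geq 1$ is $\frac1N$; hence
\[
  w_r\leq \frac{N-r}{N}\Bigl(\frac{1}{r}+\frac{1}{N}\Bigr)=\frac{1}{r}-\frac{r}{N^2}\leq \frac{1}{r}.
\]
Since the $w_r$ are nonnegative this gives $\bigl|\sum_{n\geq1}c_n/n\bigr|\leq \sum_{r=1}^{N-1}|c_r|/r$.

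Finally I would insert the Gauss sum bound $|c_r|\leq |G(r,A)|\leq \sqrt{|A|}\sqrt{|A[r]|}$ from Lemma~\ref{gauss} and group by $a=\gcd(r,N)$, using that the exponent of $A$ divides $N$, so $|A[r]|$ depends only on $a$ and $|A[N]|=|A|$. Writing $r=as$ with $\gcd(s,N/a)=1$ and $\sum_{s\leq N/a}\frac1s\leq 1+\ln N$, the term $a=N$ drops out and
\[
  \sum_{r=1}^{N-1}\frac{|c_r|}{r}\leq \sqrt{|A|}\,(1+\ln N)\sum_{\substack{a\mid N\\ a<N}}\frac{\sqrt{|A[a]|}}{a}=\sqrt{|A|}\,(1+\ln N)\Bigl(\sigma(-1,A)-\tfrac{\sqrt{|A|}}{N}\Bigr).
\]
Dividing by $\pi$ yields the asserted estimate, in fact with the slightly sharper constant $1+\ln N\leq \tfrac32+\ln N$. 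The main obstacle is precisely the conditional convergence of the Gauss-sum series: bounding it termwise in absolute value diverges, so the identity $\sum_{r=1}^N c_r=0$ and the block-regrouping that converts it into nonnegative weights with $w_r\leq 1/r$ are exactly what make the argument go through.
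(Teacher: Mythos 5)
Your proof is correct, and its overall skeleton is the same as the paper's: expand $\B$ in its pointwise convergent Fourier series, reduce $\beta$ to $-\frac{1}{\pi}\sum_{n\geq 1}\Im(G(n,A))/n$, bound the Gauss sums via Lemma~\ref{gauss}, and group $\nu$ by $a=(\nu,N)$ using $\sum_{\mu\leq n}\mu^{-1}\leq 1+\ln n$, with the $a=N$ term accounting for the subtracted $\sqrt{|A|}/N$. The one place you genuinely diverge is the treatment of the conditionally convergent series, which the paper does not display at all but outsources to \cite[Lemma 5]{Br:Pic}; that route produces the two-term intermediate bound
\[
|\beta|\leq \frac{\sqrt{|A|}}{\pi}\sum_{\nu=1}^{N-1}\frac{\sqrt{|A[\nu]|}}{\nu}+\frac{\sqrt{|A|}}{2\pi N}\sum_{\nu=1}^{N-1}\sqrt{|A[\nu]|},
\]
whose second term is the source of the extra $\tfrac{1}{2}$ in the constant $\tfrac{3}{2}+\ln N$. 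Your block decomposition $n=jN+r$, combined with the observation that $\sum_{r=1}^{N}\Im(G(r,A))=0$ (so that $\frac{1}{(j+1)N}$ may be subtracted in each block) and the telescoping weights $w_r\leq \frac{N-r}{N}(\frac1r+\frac1N)\leq \frac1r$, eliminates that second term entirely and yields the lemma with $1+\ln N$ in place of $\tfrac{3}{2}+\ln N$. All the supporting steps check out: the interchange of the finite sum over $A$ with the Fourier limit, the reality of $\sum_{r=1}^{N}G(r,A)$, the justification of the regrouping (partial sums at multiples of $N$ converge to the full limit, and the regrouped double series is absolutely convergent), and the identification $A[r]=A[(r,N)]$ because the exponent of $A$ divides $N$. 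So your argument is both self-contained where the paper cites an external lemma and marginally sharper.
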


\begin{proof}
Exactly as in the proof of \cite[Lemma 5]{Br:Pic}, we derive
  \begin{align*}
    | \beta|
    &\leq \frac{\sqrt{|A|}}{\pi }
    \sum_{\nu=1}^{N-1}\frac{1}{\nu}\sqrt{|A[\nu]|}
    +\frac{\sqrt{|A|}}{2\pi N} \sum_{\nu=1}^{N-1}
    \sqrt{|A[\nu]|} .
  \end{align*}
Rewriting the sum
over $\nu$, we obtain
  \begin{align*}
    | \beta| &\leq \frac{\sqrt{|A|}}{\pi } \sum_{\substack{a|N\\
        a\neq N}}\sum_{\substack{\mu=1\\(\mu,N/a)=1}}^{N/a}
    \frac{1}{a\mu} \sqrt{|A[a]|}
    +\frac{\sqrt{|A|}}{2\pi N} \sum_{\substack{a|N\\ a\neq N}} \sum_{\substack{\mu=1\\(\mu,N/a)=1}}^{N/a} \sqrt{|A[a]|} \\
    &\leq   \frac{\sqrt{|A|}}{\pi } \sum_{\substack{a|N\\ a\neq N}}(1+\ln(N/a))\frac{1}{a} \sqrt{|A[a]|} +\frac{\sqrt{|A|}}{2\pi N} \sum_{\substack{a|N\\ a\neq N}} \frac{N}{a}\sqrt{|A[a]|} \\
    &\leq \frac{\sqrt{|A|}}{\pi}\left( \frac{3}{2} +\ln(N)\right)
    \left(\sigma(-1,A)-\frac{\sqrt{|A|}}{N}\right).
  \end{align*}
  Here we have also used the estimate $\sum_{\nu=1}^n \frac{1}{\nu}\leq
  1+\ln(n)$.
\end{proof}

Putting the above lemmas together, we obtain the following
estimate for the dimension of the space $S_{k,A}$.

\begin{theorem}\label{fundest}
If $k\geq \tfrac{3}{2}$ and $2k\equiv -\sig(A)\pmod{4}$, then
  \begin{align*}
    \left|\dim(S_{k,A})-\dim(M_{2-k,A(-1)})-\frac{d(k-1)}{12}\right|\leq R(A),
  \end{align*}
  where
  \begin{align*}
    R(A)&=\frac{\sqrt{|A[2]|}}{4} +\frac{1+\sqrt{|A[3]|}}{3\sqrt{3}} + \frac{3}{8}|A[2]| \\
    &\phantom{=}{}+ \frac{\sqrt{|A|}}{4}\sigma(-1,A) + \frac{\sqrt{|A|}}{2\pi}\left( \frac{3}{2}
      +\ln(N)\right) \left(\sigma(-1,A)-\frac{\sqrt{|A|}}{N}\right)
  \end{align*}
  is independent of $k$.
\end{theorem}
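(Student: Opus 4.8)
The plan is to feed the dimension formula \eqref{eq:dimcusp} into the left-hand side and watch the $k$-dependent and leading $d$-terms cancel against $d(k-1)/12$, leaving only the error quantities already controlled by the preceding lemmas. First I would record that \eqref{eq:dimcusp} may be rewritten, in the notation $\alpha_1,\dots,\alpha_4$ fixed at the start of the section, as
\[
\dim(S_{k,A}) - \dim(M_{2-k,A(-1)}) = d + \frac{dk}{12} - \alpha_1 - \alpha_2 - \alpha_3 - \alpha_4.
\]
Subtracting $d(k-1)/12$ eliminates the coefficient of $k$ and turns the leading term into $d + d/12 = 13d/12$.

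Next I would insert the main parts of the $\alpha_i$. Writing $e_1 = \alpha_1 - d/4$, $e_2 = \alpha_2 - d/3$, and $e_3 = \alpha_3 - d/2 + \alpha_4/2$, the estimates \eqref{a1est}, \eqref{a2est}, and \eqref{a35'} bound $|e_1|$, $|e_2|$, $|e_3|$ respectively. Substituting $\alpha_1 = d/4 + e_1$, $\alpha_2 = d/3 + e_2$, and $\alpha_3 = d/2 - \alpha_4/2 + e_3$ collapses the $d$-terms, since $1 + \tfrac{1}{12} - \tfrac14 - \tfrac13 - \tfrac12 = 0$, and yields the exact identity
\[
\dim(S_{k,A}) - \dim(M_{2-k,A(-1)}) - \frac{d(k-1)}{12} = -\left(\frac{\alpha_4}{2} + e_1 + e_2 + e_3\right).
\]

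Taking absolute values and applying the triangle inequality, I would then bound the right-hand side by $|\alpha_4|/2 + |e_1| + |e_2| + |e_3|$ and substitute the four lemmas: Lemma \ref{a4} gives $|\alpha_4|/2 \leq |A[2]|/4 + (\sqrt{|A|}/4)\,\sigma(-1,A)$, \eqref{a1est} gives $|e_1| \leq \sqrt{|A[2]|}/4$, \eqref{a2est} gives $|e_2| \leq (1+\sqrt{|A[3]|})/(3\sqrt3)$, and \eqref{a35'} together with Lemma \ref{a5'est} gives $|e_3| \leq |A[2]|/8 + |\beta|/2$ with $|\beta|$ bounded as stated there. Finally, merging the two $|A[2]|$-contributions via $\tfrac14 + \tfrac18 = \tfrac38$ reproduces the five summands defining $R(A)$, and independence from $k$ is manifest.

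Since all the genuine analytic content is already encapsulated in Lemmas \ref{a1a2est}, \ref{a4}, and \ref{a5'est}, there is no serious obstacle beyond careful bookkeeping. The one point demanding attention is that $\alpha_4$ appears twice---as the explicit term $-\alpha_4$ in \eqref{eq:dimcusp} and hidden inside the rewriting of $\alpha_3$ used to derive \eqref{a35'}---so that the two contributions must be combined correctly into the single surviving term $-\alpha_4/2$ rather than cancelling. Verifying this, along with the vanishing of the $d$-coefficient, is the crux of the computation.
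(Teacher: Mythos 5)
Your proposal is correct and follows exactly the paper's own argument: rewrite \eqref{eq:dimcusp} as $\frac{d(k-1)}{12}$ minus the deviations $\alpha_1-\frac{d}{4}$, $\alpha_2-\frac{d}{3}$, $\alpha_3-\frac{d}{2}+\frac{\alpha_4}{2}$ and the leftover $\frac{\alpha_4}{2}$, then apply \eqref{a35'}, Lemma~\ref{a1a2est}, Lemma~\ref{a4}, and Lemma~\ref{a5'est}. Your bookkeeping, including the merging of the two $|A[2]|$-contributions into the coefficient $\tfrac{3}{8}$ and the careful handling of the two occurrences of $\alpha_4$, checks out.
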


\begin{proof}
The dimension formula \eqref{eq:dimcusp} states that
  \begin{align*}
    \dim(S_{k,A})-\dim(M_{2-k,A(-1)})&= \frac{d(k+12)}{12}-\alpha_1-\alpha_2-\alpha_3-\alpha_4\\
    \nonumber
    &= \frac{d(k-1)}{12} -(\alpha_1-\frac{d}{4}) -(\alpha_2-\frac{d}{3}) -(\alpha_3-\frac{d}{2}+\frac{\alpha_4}{2})-  \frac{\alpha_4}{2}.
  \end{align*}
  Employing \eqref{a35'}, Lemma \ref{a1a2est}, Lemma \ref{a4}, and Lemma \ref{a5'est}, we obtain the assertion.
%
\end{proof}

\begin{corollary}
\label{cor:asy}
For every $\eps>0$ there exists a constant $C$ (independent of $k$ and $A$)
such that
\[
\left|\dim (S_{k,A}) -\dim(M_{2-k,A(-1)})-\frac{d(k-1)}{12}\right| \leq C d N^{\eps-\frac{1}{2}}.
\]
for every finite quadratic module $A$ and every weight
$k\geq \tfrac{3}{2}$ with $2k\equiv -\sig(A)\pmod{4}$.
\end{corollary}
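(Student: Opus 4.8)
The plan is to start directly from Theorem~\ref{fundest}, which already bounds the left-hand side by the $k$-independent quantity $R(A)$, and to show that each of the five summands of $R(A)$ is $O_\eps(d\,N^{\eps-1/2})$. The argument is thus essentially bookkeeping built on three inputs from Section~\ref{sect:2}: the identity \eqref{eq:da}, which gives $\tfrac12|A|\le d\le |A|$ and in particular $\sqrt{|A|}\le\sqrt{2d}$; the torsion estimate \eqref{lnest}, which gives $|A[n]|\le 2n|A|/N$; and Lemma~\ref{lem:sigest}, which gives $\sigma(-1,A)\le\sqrt{2|A|/N}\,\sigma_{-1/2}(N)$. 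I will also use repeatedly that $N$ divides $2|A|$, so that $N\le 2|A|\le 4d$.

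First I would dispose of the two dominant summands, namely those carrying a factor $\sqrt{|A|}\,\sigma(-1,A)$. Combining Lemma~\ref{lem:sigest} with the trivial bound $\sigma_{-1/2}(N)\le\sigma_0(N)$ yields
\[
  \sqrt{|A|}\,\sigma(-1,A)\;\le\;\sqrt{2}\,|A|\,N^{-1/2}\,\sigma_0(N).
\]
Since $\sigma_0(N)\ll_\eps N^\eps$ by the standard divisor bound and $|A|\le 2d$, the right-hand side is $\ll_\eps d\,N^{\eps-1/2}$, which settles the fourth summand of $R(A)$ outright. The fifth summand carries the extra factor $\tfrac32+\ln(N)$; here I would absorb the logarithm using $\ln(N)\ll_\delta N^\delta$ and then shrink $\delta$ so that the product of $N^\delta$ with $\sigma_0(N)$ stays below $N^\eps$, again landing at $\ll_\eps d\,N^{\eps-1/2}$.

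The remaining three summands are smaller and are handled by the torsion bound alone. By \eqref{lnest} the terms $\tfrac14\sqrt{|A[2]|}$ and $\tfrac{1}{3\sqrt3}\sqrt{|A[3]|}$ are both $\ll\sqrt{|A|/N}=\sqrt{|A|}\,N^{-1/2}\le\sqrt{2d}\,N^{-1/2}$, and since $\sqrt{d}\le d$ and $N^{-1/2}\le N^{\eps-1/2}$ this is $\le\sqrt{2}\,d\,N^{\eps-1/2}$; likewise $\tfrac38|A[2]|\ll|A|/N\le 2d/N\le 2d\,N^{\eps-1/2}$. The only genuinely delicate point is the lone additive constant $\tfrac{1}{3\sqrt3}$ coming from the ``$1$'' inside the second summand: it must be dominated by $d\,N^{\eps-1/2}$ even for the smallest modules. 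This is exactly where $N\le 4d$ is essential, giving $d\,N^{\eps-1/2}\ge\tfrac14 N^{\eps+1/2}\ge\tfrac14$, so that any fixed constant is indeed $O(d\,N^{\eps-1/2})$.

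Summing the five bounds produces a single constant $C=C_\eps$, independent of $k$ and $A$, with $R(A)\le C\,d\,N^{\eps-1/2}$, which is the assertion. I expect the only real obstacle to be this last point, the verification of uniformity for small $d$, since everything else follows immediately from the estimates already in hand; the logarithmic factor in the fifth term is a minor nuisance rather than a true difficulty, dealt with by splitting $\eps$.
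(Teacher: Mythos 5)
Your proof is correct and follows essentially the same route as the paper: both start from Theorem~\ref{fundest} and bound $R(A)$ via the torsion estimate \eqref{lnest}, Lemma~\ref{lem:sigest}, and the standard divisor-function bound, absorbing the logarithm and the additive constant by shrinking $\eps$ and using $N\leq 2|A|\leq 4d$. Your version is merely more explicit term by term than the paper's two-line argument; no substantive difference.
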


\begin{proof}
Using Theorem \ref{fundest}, the bound \eqref{lnest} for $|A[a]|$,
and Lemma \ref{lem:sigest}, we find that there are constants $C_1, C_2>0$ (independent of $k$ and $A$)
such that
\[
R(A)\leq C_1\frac{|A|}{N}+ C_2\frac{|A|}{\sqrt{N}}\sigma_{-1/2}(N)(1+\ln(N)).
\]
By  means of the estimate $\sigma_{-1/2}(N)\ll_\eps N^\eps$ we see that there exists a $C>0$ (depending on $\eps$) such that
\[
R(A)\leq C \cdot d  N^{\eps-\frac{1}{2}}.
\]
This proves the assertion.
\end{proof}

\begin{corollary}\label{cor:fin}
Let  $r_0\in \Z_{\geq 0}$. There exist only finitely many isomorphism classes of finite quadratic modules $A$ with minimal number of generators $\leq r_0$ such that $S_{k,A}=\{0\}$ for some weight $k \geq \tfrac{3}{2}$ with $2k\equiv -\sig(A)\pmod{4}$.
\end{corollary}
\begin{proof}
Since for any $N_0\in \Z_{>0}$ there are only finitely many isomorphism classes of
finite quadratic modules $A$ with bounded minimal number of generators and level $N\leq N_0$, we obtain the assertion from Corollary \ref{cor:asy}.
\end{proof}
\begin{remark}
\label{rem:finex}
i) In Corollary \ref{cor:fin}, the bound $r_0$ on the minimal number of generators is essential.
For instance, if $A=3^{\epsilon n}$ with $n\in \Z_{>0}$ odd and $\epsilon = (-1)^{\frac{n-1}{2}}$, then $\sig(A)\equiv 2\pmod{4}$ and
$S_{3,A} =\{0\}$. This follows for instance from the dimension formula in \cite{Ha}, Chapter~5.2.1, p.~93.

ii) Note that if $k=\tfrac{1}{2}$, it follows from \cite[Theorem 7]{Sk2} that there exist infinitely many isomorphism classes of finite quadratic modules $A$ such that $S_{\frac{1}{2},A}=\{0\}$. It would be interesting to understand what happens in weight $1$.
\end{remark}

Under the assumptions of Corollary \ref{cor:fin} it is possible to make the constants appearing in the proof explicit and to derive an explicit lower bound $N_0$ such that  $S_{k,A}$ is nontrivial for all finite quadratic modules $A$ with level larger than $N_0$. However, it turns out that such a bound is very large, and therefore not useful for a computer computation of the finite list of simple finite quadratic modules $A$.
As an example, for $\varepsilon = 1/5$ we can get $C = 45.38$ and this would give the bound 
$N _0 \geq 1.32 \cdot 10^{9}$ for $k=2$. 
Therefore, a search for finite quadratic modules with order up to $3.04 \cdot 10^{36}$ would be required in the case of signature $(2,2)$, which is not feasible.
Even though it might be possible to find better parameters, we did not try to optimize this.
Instead, we use a more systematic approach to this computational task.
We first compute all {\em anisotropic} simple finite quadratic modules, and then construct all remaining ones by
means of isotropic quotients.


\subsection{Anisotropic finite quadratic modules}
\label{sec:anis-discr-forms}

A finite quadratic module $(A,Q)$ is called {\em isotropic}, if there exists an $x\in A\setminus \{0\}$ such that $Q(x)=0\in \Q/\Z$. Otherwise it is called {\em anisotropic}.
In this subsection
we now consider anisotropic finite quadratic modules. We show that there are only finitely many isomorphism classes of anisotropic finite quadratic modules $A$ for which $S_{k,A}$ is trivial. The following Lemma is a direct consequence of Theorem \ref{thm:jordan} and the theory of quadratic forms over finite fields.

\begin{lemma}
  \label{lem:anisoprim}
  Let  $(A,Q)$ be an anisotropic finite quadratic module of level $N$.
  Then $N=2^{t}N'$, where $N'$ is an odd square-free number and $t \in \{0,1,2,3\}$.
  If $p$ is a prime dividing $N$, then the $p$-component $A_p$ of $A$ belongs to the finite quadratic modules given in Table
  \ref{tab:anis}.
  \begin{table}[h]
    \centering
        \caption{The non-trivial isomorphism classes of anisotropic finite quadratic modules of prime-power order.
      The isomorphism classes of the finite quadratic modules in the last line depend only on the sum $s+t$.
      Here, $d(A)$ is the discriminant of $A$, equal to $|A| \in \Q^{\times}/(\Q^{\times})^2$. \label{tab:anis}}
    \begin{tabular}{@{}llll@{}}
      \toprule
      $p$                                      & genus symbol of $A$          & $\sig(A)$ & $d(A)$ \\
      \midrule
      \multirow{2}{*}{$p \equiv 1 \bmod{4}$}   & $p^{\pm 1}$            &  $4 + 2(1\pm\leg{p}{2})$ & $p$\\
      & $p^{-2}$                    & $4$ & $0$\\
      \midrule
      \multirow{2}{*}{$p \equiv 3 \bmod{4}$}   & $p^{\pm 1}$            &  $\pm 2 \leg{p}{2}$ & $p$ \\
      & $p^{+2}$                    & $4$ & $0$\\
      \midrule
      \multirow{4}{*}{$p=2$}                   & $2^{-2}$             & $4$ & $0$ \\
      & $2_{nt}^{\pm n}, t = 1,7, n = 1,2,3$ & $nt$  & $2^{n}$\\
      & $4_{t}^{\pm 1}, t = 1,3,5,7$ & $t$ & $0$\\
      & $2_{s}^{\pm 1}4_{t}^{\pm 1}, s=1,7, t=1,3,5,7$ & $s+t$ & $2$\\
      \bottomrule\vspace{0.5mm}
    \end{tabular}
  \end{table}
\end{lemma}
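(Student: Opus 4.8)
The plan is to reduce everything to the individual $p$-components and then combine the Jordan decomposition (Theorem \ref{thm:jordan}) with isotropy facts for quadratic forms over the rings $\Z/p^k\Z$. First I would record the key reduction: since $A\cong\bigoplus_p A_p$ orthogonally and the value $Q(x_p)$ of an element supported on the $p$-component lies in the $p$-primary part of $\Q/\Z$, a sum $\sum_p Q(x_p)$ vanishes iff each summand vanishes. Hence $A$ is anisotropic if and only if every $A_p$ is anisotropic, and it suffices to classify anisotropic $p$-components. By Theorem \ref{thm:jordan} each $A_p$ is an orthogonal sum of the elementary modules $\calA_{p^k}^t$, $\calB_{2^k}$, $\calC_{2^k}$.

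The guiding mechanism throughout is that any elementary summand which already contains a nonzero isotropic vector forces the whole module to be isotropic, since that vector survives in the orthogonal sum. I would first exhibit such vectors to eliminate all high pieces: for $p$ odd, $p^{k-1}\in\Z/p^k\Z$ is isotropic in $\calA_{p^k}^t$ once $k\geq 2$; for $p=2$, short vectors (the image of $4$ in $\calA_{2^k}^t$ for $k\geq 3$, the order-$2$ vector $(2^{k-1},0)$ in $\calB_{2^k}$ for $k\geq 2$, and $(1,0)$ in every $\calC_{2^k}$) are isotropic. This already yields the level statement: the odd part of $N$ is square-free and the $2$-part is at most $2^3=8$, so $N=2^tN'$ with $t\in\{0,1,2,3\}$ and $N'$ odd square-free. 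The only admissible summands that remain are $\calA_p^t$ ($p$ odd), $\calA_2^t$, $\calA_4^t$, and $\calB_2$.

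Next I would bound the ranks. For odd $p$ the summand $\bigoplus_i\calA_p^{t_i}$ is, up to the scaling $\tfrac1p$, the diagonal form $\sum t_ix_i^2$ over $\F_p$; by the theory of quadratic forms over finite fields any nondegenerate form in $\geq 3$ variables is isotropic, so the rank is $\leq 2$. The two cases reduce via Lemma \ref{lem:isomodd} to the symbols $p^{\pm1}$ and $p^{\pm2}$, and evaluating $\leg{-t_1t_2}{p}$ singles out the anisotropic binary form according to $p\bmod 4$, matching Table~\ref{tab:anis}. For $p=2$ I would carry out the analogous but finer analysis over $\Z/4\Z$ and $\Z/8\Z$: explicit isotropic vectors show that two copies of $\calA_4$, four copies of $\calA_2$, an $\calA_4$ together with two $\calA_2$'s, a $\calB_2$ together with an $\calA_4$, and $\calB_2\oplus\calB_2\cong\calC_2\oplus\calC_2$ are all isotropic, while Lemma \ref{lem:isomeven}(5) rewrites $\calB_2\oplus\calA_2$ as three copies of $\calA_2$ so the bookkeeping stays consistent. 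This leaves exactly the four families $\calB_2$, $\bigoplus_{i\leq n}\calA_2^{t_i}$ with $n\leq 3$, $\calA_4^t$, and $\calA_2^s\oplus\calA_4^t$, which are the $p=2$ rows of the table.

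Finally, for each surviving module I would compute $\sig(A)$ from Milgram's formula together with the explicit Gauss sums in Propositions \ref{prop:gaussodd}, \ref{prop:gausseven1}, \ref{prop:gausseven2}, and read off $d(A)=|A|\in\Q^\times/(\Q^\times)^2$ directly from the group order. I expect the main obstacle to be the $p=2$ analysis: because the Jordan decomposition and genus symbol are not unique at $2$ (cf.\ the Remark after Proposition \ref{prop:genus-symbol-isom}), one must repeatedly use the normalization isomorphisms of Lemma \ref{lem:isomeven} so as neither to double-count nor to overlook an anisotropic combination, and the clean ``rank $\geq 3\Rightarrow$ isotropic'' input available over $\F_p$ has to be replaced by hand-checked isotropic vectors modulo $4$ and $8$.
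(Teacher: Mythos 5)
Your argument is correct and follows precisely the route the paper takes: the paper dismisses this lemma as ``a direct consequence of Theorem~\ref{thm:jordan} and the theory of quadratic forms over finite fields,'' and your reduction to $p$-components, elimination of the higher Jordan blocks by exhibiting explicit isotropic vectors, and the rank bound via isotropy of ternary forms over $\F_p$ (with the hand-checked $2$-adic cases) are exactly the details being invoked. The only point worth flagging is that the quadratic form on $\calB_{2^k}$ in Definition~\ref{elemdf} should read $(x^2+xy+y^2)/2^k$ (as written it is degenerate), but your isotropic vector $(2^{k-1},0)$ and the rest of your bookkeeping are unaffected by this.
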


In the case of an anisotropic finite quadratic module it is also possible to obtain an explicit formula
for the quantity $\beta$ defined in \eqref{a5'} in terms of class numbers.
Before we can state the precise result we need to introduce some more notation.

Let $A$ be an anisotropic finite quadratic module of level $N$ and write $A = \bigoplus_{p \mid N} A_{p}$ for its
decomposition into $p$-components.
For each prime divisor $p$ of $N$, we denote the minimal number of generators of $A_p$ by $r_p$.
If $A_{p} = q^{\eps \cdot n}$ with $q = p^{r}$ we write $\eps_{A}(p) = \eps$.
We define a divisor $M$ of $N$ by
\[
  M = \prod_{\substack{p \mid N \text{ odd} \\ r_p = 2}} p
  \cdot
  \begin{cases}
    2, &\text{if } A_2 = 2^{-2},\\
    1, &\text{otherwise.}
  \end{cases}
\]

For $d \mid N$ we define the following auxiliary quantities:
\begin{align*}
  S_1(d)      &= \{ p \text{ prime};\, p \mid (d,M) \},\\
  \eps_{1}(d) &= (-1)^{|S_1(d)|},\\
  a_2(d)      &=
  \begin{cases}
    1,       & \text{if } r_2 = 0,\\
    2^{r_2-2}, & \text{if } r_2 > 0 \text{ and } d \text{ is odd},\\
    2^{\left[\frac{r_2}{2}\right]-1}, & \text{if } r_2 > 0 \text{ and } d \text{ is even,}
  \end{cases}
\end{align*}
\begin{align*}
  S_3(d)      &= \{p \text{ prime };\, p \mid \frac{d}{(d,M)}, p \equiv 3 \bmod{4}  \},\\
  \eps_{3}(d) &=
              \begin{cases}
                (-1)^{\frac{|S_3(d)|-1}{2}}, & \text{if } |S_{3}(d)| \text{ is odd,}\\
                (-1)^{\frac{|S_3(d)|}{2}}, & \text{if } |S_{3}(d)| \text{ is even.}
              \end{cases}
\end{align*}
For the $p$-components corresponding to odd primes, we define a sign
\[
  \epsodd(d) = \prod_{p \mid \frac{d}{(d,M)} \text{ odd}} \eps_A(p)\leg{p}{2}\leg{N/d}{p}.
\]
We let $N_2$ be the even part of $N$ and put $N_{2,d} = N_2 / (N_2,N/d)$,
$N_d = N/(d \cdot (N/d,N_2))$ and $d' = d/(d,M \cdot N_2)$.
Note that $d'$ is odd.
If $d$ is odd, we let $\eps_2(d) = 1$.
For even $d$, we define $\eps_2(d)$ in Table \ref{tab:eps2}.
\begin{table}[h]
  \centering
    \caption{$\eps_{2}(d)$ for even $d$.\label{tab:eps2}}
  \begin{tabular}[h]{lll}
    \toprule
    $A_{2}$ & $d' \equiv 1 \pmod{4}$ & $d' \equiv 3 \pmod{4}$\\
    \midrule
    $2^{-2}$ & $0$ & $1$ \\
    $2_{r_2 t}^{+r_2}, 4_{t}^{\pm 1}$ & $\leg{-N_{2,d}}{t N_d}$ & $\delta(r_2) \leg{N_{2,d}}{tN_d}$\\
    $2_1^{+1}4_{1}^{+1}$, $2_1^{+1}4_{3}^{-1}$ & $\leg{-8}{N/d}$ & $0$\\
    $2_1^{+1}4_{5}^{-1}$, $2_1^{+1}4_{7}^{+1}$ & $0$ & $\leg{8}{N/d}$\\
  \bottomrule
  \end{tabular}\vspace{1mm}
\end{table}
For simplicity, we also define $\eps(d) = \epsodd(d)\eps_{1}(d)\eps_{2}(d)\eps_{3}(d)$.

\begin{theorem}
  \label{a5'aniso}
  Let $(A,Q)$ be an anisotropic finite quadratic module of level $N$. We have
  \[
    \beta = -\sum_{\substack{d \mid N \\ d (d,M) \equiv 0,3 \bmod{4}}} \eps(d) a_2(d) \cdot (N/d,M) H(-d(d,M)).
  \]
  Here, $H(-n)$ is equal to the class number of primitive positive definite integral binary quadratic
  forms of discriminant $-n$ for $n > 4$ and $H(-3) = 1/3$ and $H(-4) = 1/2$.
\end{theorem}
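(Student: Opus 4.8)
The plan is to reduce $\beta=\sum_{\gamma\in A}\B(Q(\gamma))$ to a weighted sum of Gauss sums by finite Fourier analysis, and then to recognize the resulting character-twisted sums as class numbers via the Dirichlet class number formula. Since $A$ has level $N$, the values $Q(\gamma)$ lie in $\tfrac1N\Z/\Z$, so writing $r(j)=\#\{\gamma\in A:\ NQ(\gamma)\equiv j\ (N)\}$ we have $\beta=\sum_{j\bmod N}r(j)\,\B(j/N)$, and orthogonality of additive characters gives $r(j)=\tfrac1N\sum_{\nu\bmod N}G(\nu,A)e(-\nu j/N)$ with $G$ as in \eqref{defgauss}. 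A direct evaluation of the transform of the sawtooth, using $\sum_j e(-\nu j/N)=0$ and $\sum_{j}j\,e(-\nu j/N)=N\,(e(-\nu/N)-1)^{-1}$ for $\nu\not\equiv0$, identifies the Fourier coefficient of $\B$ as $\tfrac{i}{2N}\cot(\pi\nu/N)$, whence
\[
  \beta=\frac{i}{2N}\sum_{\nu=1}^{N-1}G(\nu,A)\cot\!\left(\frac{\pi\nu}{N}\right).
\]
Because $G(N-\nu,A)=\overline{G(\nu,A)}$ by \eqref{basic1} while $\cot$ is odd about $\tfrac{N}{2}$, this expression is visibly real, as it must be.

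Next I would insert the explicit Gauss sums. By \eqref{basic3} the sum factors as $G(\nu,A)=\prod_{p\mid N}G(\nu,A_p)$, and Lemma \ref{lem:anisoprim} restricts each $p$-component $A_p$ to those in Table \ref{tab:anis}, for which Propositions \ref{prop:gaussodd}, \ref{prop:gausseven1} and \ref{prop:gausseven2} supply closed formulas. The key point is that, within a stratum determined by a divisor $d\mid N$ recording prime by prime the exact common divisor of $\nu$ with each $A_p$, the product $G(\nu,A)$ collapses to an explicit constant---a root of unity times a power of $\sqrt2$ times the gcd factors---multiplied by a single Jacobi symbol in the remaining unit part of $\nu$. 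I would therefore stratify the $\nu$-sum according to $d$, so that each stratum contributes a sum of the shape $\sum_{\nu}\chi_d(\nu)\cot(\pi\nu/f_d)$ for the relevant quadratic character $\chi_d$ of conductor $f_d$.

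The final step identifies each stratified sum with a class number. Applying finite Fourier duality a second time (equivalently, evaluating the Gauss sum of $\chi_d$), the cotangent sum collapses to the $\B$-twist $\sum_{a\bmod f_d}\chi_d(a)\B(a/f_d)=B_{1,\chi_d}$, which by the Dirichlet class number formula in Hurwitz normalization (with $H(-3)=\tfrac13$ and $H(-4)=\tfrac12$) equals $-H(-d(d,M))$ up to the explicit signs. The congruence condition $d(d,M)\equiv0,3\pmod4$ is precisely the requirement that $-d(d,M)$ be the discriminant of a binary quadratic form; strata failing it produce an even character whose $\B$-twist vanishes, so those $d$ drop out. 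Collecting the root-of-unity and Jacobi-symbol contributions of the various primes yields the signs $\epsodd(d),\eps_1(d),\eps_2(d),\eps_3(d)$, the power of two $a_2(d)$, and the multiplicity $(N/d,M)$, the latter recording the imprimitivity of $\chi_d$ (equivalently the square factors summed over in the Hurwitz class number).

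I expect the main obstacle to be the $2$-adic bookkeeping. The Gauss sum in Proposition \ref{prop:gausseven1} splits into three cases according to the exact power of $2$ dividing $\nu$, the admissible $2$-components in Table \ref{tab:anis} are numerous, and matching them to the correct Hurwitz class number with the precise sign $\eps_2(d)$ from Table \ref{tab:eps2} and the correct power $a_2(d)$ demands a careful analysis of how the even part of $\nu$ interacts with the cotangent weight. Pinning down the class number formula in exactly this normalization, including the exceptional discriminants $-3$ and $-4$, is the other delicate point; the odd-prime strata, by contrast, are routine products of quadratic Gauss sums and Jacobi symbols.
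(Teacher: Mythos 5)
Your strategy is correct and lands on the same skeleton as the paper's proof, but the first step is genuinely different. The paper expands $\B(x)=-\frac{1}{2\pi i}\sum_{n\neq 0}e(nx)/n$ as a (conditionally convergent) infinite Fourier series, obtaining
\[
  \beta=-\frac{1}{\pi}\sum_{n\geq 1}\frac{\Im\,G(n,A)}{n},
\]
then stratifies by $\gcd(n,N)$ so that each stratum becomes a Dirichlet series $L(\chi_{-d(d,M)},1)$, which is converted to a class number via $L(\chi_{D},1)/\pi=H(D)/\sqrt{\abs{D}}$. You instead stay entirely finite: your transform computation is correct (indeed $\sum_{j=0}^{N-1}j\zeta^j=N/(\zeta-1)$ for $\zeta^N=1$, $\zeta\neq1$, gives the coefficient $\tfrac{i}{2}\cot(\pi\nu/N)$, and the resulting expression $\beta=\tfrac{i}{2N}\sum_{\nu=1}^{N-1}G(\nu,A)\cot(\pi\nu/N)$ is real by \eqref{basic1} and the oddness of $\cot$), and the class numbers then enter through $\sum_a\chi(a)\B(a/f)=B_{1,\chi}$ rather than through $L(\chi,1)$. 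The two routes are equivalent (the bridge being the digamma reflection formula, or Gauss's finite evaluation of $L(1,\chi)$ for odd $\chi$); yours buys freedom from conditionally convergent series and keeps every identity finite, while the paper's makes the quadratic $L$-value, and hence the congruence condition $d(d,M)\equiv 0,3\pmod 4$, appear with no extra work. Note one point of care in your step 3: after restricting to a stratum the character in $\nu$ is generally imprimitive modulo $N/d$, and the discriminant $-d(d,M)$ need not be fundamental, so the passage from the imprimitive cotangent sum to $H(-d(d,M))$ (class numbers of non-maximal orders, with the factor $(N/d,M)$) requires the Euler-factor comparison you allude to; this, together with the $2$-adic case analysis against Table \ref{tab:eps2}, is exactly the residual bookkeeping, and the paper itself only writes out one representative $2$-adic component and declares the rest analogous.
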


\begin{proof}[Proof of Theorem \ref{a5'aniso}]
  Using the pointwise convergent
  Fourier expansion
  \begin{equation*}
    \B(x)=-\frac{1}{2\pi i} \sum_{n\in \Z-\{0\}} \frac{e(nx)}{n}
  \end{equation*}
  we find
  \begin{align*}
    \beta
    &= -\frac{1}{\pi} \sum_{n=1}^{\infty} \frac{1}{n} \Im(G(n,A))
    = -\frac{1}{\pi} \sum_{d \mid N} \sum_{\substack{n \geq 1 \\ (n,N/d) = 1}} \frac{1}{dn}\Im(G(dn,A)).
  \end{align*}
  First, we assume that $N$ is odd. For a discriminant $D$, we write $\chi_{D}$
  for the quadratic Dirichlet character modulo $\abs{D}$ given by $n \mapsto \leg{D}{n}$.
  Inserting the formula for $G(n,A)$ from Proposition~\ref{prop:gaussodd} and substituting $N/d$ for $d$, we obtain
  \begin{align*}
    \beta& = -\frac{\sqrt{N}}{\pi} \sum_{d \mid N} \sum_{\substack{n \geq 1 \\ (n,d) = 1}}
    \frac{(M,N/d)\sqrt{(M,d)}}{n\sqrt{N/d}}
    \Im\biggl(\prod_{\substack{p \mid d \\ r_p=1}} \eps_{A}(p) \leg{p}{2} \leg{nN/d}{p} e\left(\frac{1-p}{8}\right)
    \prod_{\substack{p \mid d \\ r_p=2}} (-1) \biggr)\\
             &= -\frac{\sqrt{N}}{\pi} \sum_{\substack{d \mid N \\ d(d,M) \equiv 3 \bmod{4}}}\eps(d)\cdot (M,N/d)\,
             \frac{\sqrt{(M,d)}}{\sqrt{N/d}} \sum_{\substack{n \geq 1 \\ (n,d) = 1}} \frac{\chi_{-d \cdot (M,d)}(n)}{n}\\
             &= - \sum_{\substack{d \mid N \\ d \cdot (d,M) \equiv 3 \bmod{4}}}\eps(d)\cdot (M,N/d)\,
             \frac{\sqrt{d \cdot (M,d)}}{\pi}\, L(\chi_{-d \cdot (M,d)},1).
  \end{align*}
  Here, we used that $e\left(\frac{1 - p}{8}\right) = \leg{p}{2}$ for $p \equiv 1 \pmod{4}$
  and $e\left(\frac{1 - p}{8}\right) = \leg{p}{2}i$ for $p \equiv 3 \pmod{4}$.
  Therefore, only divisors congruent to $3$ modulo $4$ contribute to the sum.
  Using that $L(\chi_{D},1)/\pi = H(D)/\sqrt{\abs{D}}$ (cf. \cite{Za}, Teil II, \S 8, Satz 5) for a negative discriminant $D$,
  we obtain the statement of the theorem in this case.

  If $N$ is even, we have to consider the different $2$-adic components separately.
  The case $A_2 = 2^{-2}$ is easy to obtain.
  We give a proof for $A_2 = 2_{rt}^{+r_2}$. The remaining cases
  are done analogously.  Using the same argument as before together with the results in Proposition \ref{prop:gausseven1},
  we obtain
  \begin{align*}
    \beta &= -\frac{\sqrt{N}}{\pi}
             \sum_{\substack{d \mid N\\ N/d \text{ odd}}}\epsodd(d)\eps_1(d)\cdot (M,N/d) \frac{\sqrt{(M,d)}}{\sqrt{N/d}} \\
             &\quad\quad \times
             \sum_{\substack{n \geq 1 \\ (n,d) = 1}} \frac{1}{n} \sqrt{2}^{r_2-2}
             \Im\biggl( \leg{tnN/d}{2}^{r_2} e\left(\frac{r_2tnN/d}{8}\right)
               \prod_{\substack{p \mid d \text{ odd}\\ r_p=1}}\leg{n}{p} \gamma_p\Biggr)\\
             &\quad\quad
             -\frac{1}{\pi} \sum_{\substack{d \mid N \\ N/d \equiv 0 \bmod{4}}}\eps(d)\cdot (M,N/d)
             \frac{\sqrt{N \cdot (M,d)}}{\sqrt{N/d}} \sum_{\substack{n \geq 1 \\ (n,d) = 1}} \frac{1}{n}\leg{-d \cdot (M,d)}{n}.
  \end{align*}
  Here, $\gamma_p = 1$ for $p \equiv 1 \bmod{4}$ and $\gamma_p = i$ for $p \equiv 3 \bmod{4}$.
  Using that $\sqrt{2}\leg{m}{2} e\left(\frac{m}{8}\right) = 1+\leg{-4}{m}i$, we obtain
  \begin{align*}
    \sqrt{2}^{r_2-2} &\Im\biggl( \leg{tnN/d}{2}^{r_2} e\left(\frac{r_2tnN/d}{8}\right)
               \prod_{\substack{p \mid d \text{ odd}\\ r_p=1}}\leg{n}{p} \gamma_p\biggr)\\
             &= a_2(N/d) \prod_{\substack{p \mid d \text{ odd}\\ r_p=1}}\leg{n}{p}
             \cdot
             \begin{cases}
               \leg{-4}{tnN/d}, & \text{if } d/(4 \cdot (M,d)) \equiv 1 \bmod 4\\
               \delta(r_{2})\leg{4}{tnN/d}, & \text{if } d/(4 \cdot (M,d)) \equiv 3 \bmod 4,
             \end{cases}
  \end{align*}
  which yields the statement of the theorem for $A_2 = 2_{nt}^{+r_{2}}$.
\end{proof}

The following upper bound for the class number is well known.
\begin{lemma} \label{lem:HD}
  Let $-D$ be a negative discriminant. We have
  \[
    H(-D) \leq \frac{\sqrt{D} \ln{D}}{\pi}.
  \]
\end{lemma}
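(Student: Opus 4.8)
The final statement is Lemma \ref{lem:HD}, the classical upper bound
\[
  H(-D) \leq \frac{\sqrt{D}\,\ln D}{\pi}
\]
for the class number (Hurwitz–Kronecker class number) of a negative discriminant $-D$. The plan is to deduce this from the analytic class number formula together with an elementary bound on the relevant $L$-value, exactly the ingredients already available from the proof of Theorem \ref{a5'aniso}. There the authors use the identity
\[
  \frac{L(\chi_D,1)}{\pi} = \frac{H(D)}{\sqrt{\abs{D}}}
\]
for a negative discriminant $D$ (with reference to \cite{Za}), so the problem reduces to proving
\[
  L(\chi_{-D},1) \leq \ln D.
\]

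First I would write $L(\chi_{-D},1) = \sum_{n\geq 1} \chi_{-D}(n)/n$ and control this via partial summation (Abel summation). Let $S(x) = \sum_{n\leq x} \chi_{-D}(n)$. Because $\chi_{-D}$ is a non-principal character modulo $\abs{D}=D$, the character sum is periodic with mean zero over a full period, so the Pólya–Vinogradov philosophy gives $\abs{S(x)} \leq D$ trivially, but the sharper elementary bound one needs is that the incomplete sum over any interval of length at most $D$ is bounded in absolute value by roughly $D/2$ (since $\sum_{0<n<D/2}$-type partial sums of a real primitive character are controlled by the number of residues). The cleanest route is to split the tail $\sum_{n>D}$ using $\abs{S(x)} \leq D$ (or the half-period bound) and estimate the head $\sum_{n\leq D} 1/n \leq 1+\ln D$ directly. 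I expect the sharpest constant to come from grouping terms so that the head contributes at most $\ln D$ and the tail is nonpositive or negligible.

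The key steps, in order, are: (i) invoke the class number formula to reduce to bounding $L(\chi_{-D},1)$; (ii) apply Abel summation to $\sum \chi_{-D}(n)/n$, writing
\[
  L(\chi_{-D},1) = \sum_{n\leq D} \frac{\chi_{-D}(n)}{n} + \int_{D}^{\infty} \frac{S(t)}{t^2}\,dt ;
\]
(iii) bound the finite sum by $\sum_{n\leq D} 1/n \leq 1 + \ln D$ and bound the tail integral using $\abs{S(t)} \leq D$, which gives $\int_D^\infty D/t^2\,dt = 1$; and (iv) combine, obtaining $L(\chi_{-D},1) \leq \ln D$ after absorbing the additive constants, hence $H(-D) = \sqrt{D}\,L(\chi_{-D},1)/\pi \leq \sqrt{D}\ln D/\pi$.

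The main obstacle is getting the constant exactly right: the naive partial-summation bound produces $L(\chi_{-D},1) \leq 1 + \ln D$ plus tail terms, which would only yield $H(-D) \leq \sqrt{D}(1+\ln D)/\pi$, slightly weaker than the stated inequality. To recover the clean bound $\ln D$ one must exploit that $\chi_{-D}$ is real and that its partial sums $S(x)$ are not merely $O(D)$ but satisfy the half-period cancellation $\abs{S(x)} \leq \tfrac12 D$ for $x$ in a half-period, or else use the integral representation $L(\chi_{-D},1) = -\tfrac{1}{D}\sum_{a=1}^{D-1}\chi_{-D}(a)\ln\bigl(2\sin(\pi a/D)\bigr)$ (for even character) respectively the Gauss-sum formula for odd real characters, and bound the resulting logarithmic sum. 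Since this is a standard textbook estimate I would cite the elementary derivation (for instance via the formula $H(-D) \leq \tfrac{1}{\pi}\sqrt{D}\sum_{n\leq D} 1/n$ combined with $\sum_{n\leq D}1/n \leq \ln D$ valid for $D$ large enough, handling the finitely many small $D$ by direct inspection) rather than reprove Pólya–Vinogradov; the point for the application is only an upper bound polynomial in $\sqrt{D}$ with a logarithmic factor, so any clean constant suffices for the subsequent dimension estimates.
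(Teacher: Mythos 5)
Your proposal follows essentially the same route as the paper: reduce via the class number formula $L(\chi_{-D},1)/\pi = H(-D)/\sqrt{D}$ to the elementary estimate $L(\chi_{-D},1)\le \ln D$, and defer that character-sum bound to a standard reference rather than reprove it (the paper cites the argument of Lemma 5.6 in van der Geer's book for $D>4$ and checks $D=3,4$ directly with the normalizations $H(-3)=1/3$, $H(-4)=1/2$). Your honest observation that naive partial summation only yields $2+\ln D$ and that one must either sharpen the character-sum bound or cite the textbook estimate is exactly the point the paper sidesteps by citation, so your fallback matches the paper's actual proof.
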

\begin{proof}
  We use again $L(\chi_{D},1)/\pi = H(D)/\sqrt{\abs{D}}$
  and argue as in the proof of Lemma 5.6 on page 172 in \cite{vdg-hilbert}
  to obtain $L(\chi_{-D},1) \leq \ln{D}$ for $D>4$.
  Note that the bound for $H(-D)$ is also valid for $D=3$ and $D=4$ with our normalization
  that $H(-3) = 1/3$ and $H(-4) = 1/2$.
\end{proof}

\begin{lemma}
  \label{lem:a5'anisoest}
  Let $A$ be an anisotropic finite quadratic module.
  We have
  \[
    |\beta| \leq 1.71 \cdot |A|^{\frac{5}{8}} \ln(2|A|).
  \]
\end{lemma}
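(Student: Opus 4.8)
The plan is to estimate the explicit expression for $\beta$ from Theorem~\ref{a5'aniso} term by term and to reduce the resulting divisor sum to a product over primes that can be bounded explicitly. First I would apply the triangle inequality to the formula of Theorem~\ref{a5'aniso}. Since every $\eps(d)$ lies in $\{0,\pm 1\}$ and, with the normalization used there, $H(-n)=0$ unless $n\equiv 0,3\pmod 4$, summing over all $d\mid N$ gives
\[
  |\beta|\le \sum_{d\mid N} a_2(d)\,(N/d,M)\,H\big({-}d(d,M)\big).
\]
Inserting the class number estimate $H(-D)\le \sqrt{D}\,\ln(D)/\pi$ from Lemma~\ref{lem:HD}, each summand becomes $\tfrac{1}{\pi}\,a_2(d)\,(N/d,M)\sqrt{d(d,M)}\,\ln\!\big(d(d,M)\big)$.

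Next I would use two structural facts coming from Lemma~\ref{lem:anisoprim}. Because $A$ is anisotropic, $M$ is squarefree and every prime dividing $M$ divides $N$ exactly once; hence $(d,M)(N/d,M)=M$ for all $d\mid N$, so that $(N/d,M)\sqrt{d(d,M)}=M\sqrt{d/(d,M)}$. Moreover, writing $N=2^{t}N'$ with $N'$ odd and squarefree and decomposing $A$ into $p$-components, one checks that $|A|=|A_2|\,N'\,M_{\mathrm{odd}}$, where $M_{\mathrm{odd}}$ is the odd part of $M$; together with $M=2^{\nu}M_{\mathrm{odd}}$ (with $\nu\in\{0,1\}$) this yields $NM=2^{t+\nu}|A|/|A_2|$. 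Running through the finitely many possibilities for $A_2$ listed in Lemma~\ref{lem:anisoprim} gives $2^{t+\nu}/|A_2|\le 2$, hence $d(d,M)\le NM\le 2|A|$ and $\ln(d(d,M))\le\ln(2|A|)$. Pulling this logarithmic factor out reduces the assertion to the purely multiplicative estimate
\[
  \frac{M}{\pi}\sum_{d\mid N} a_2(d)\sqrt{d/(d,M)}\ \le\ 1.71\,|A|^{5/8}.
\]

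Finally I would evaluate the divisor sum as an Euler product. The factor $\sqrt{d/(d,M)}$ is multiplicative and $a_2(d)$ depends only on the parity of $d$, so the sum factors into a $2$-adic part, which is a finite sum determined by the finitely many shapes of $A_2$, and an odd part equal to $\prod_{r_p=1}(1+\sqrt p)\,\prod_{r_p=2}2$. Absorbing $M$ and comparing with $|A|^{5/8}=|A_2|^{5/8}\prod_{r_p=1}p^{5/8}\prod_{r_p=2}p^{5/4}$, the inequality becomes a product of local ratios: $(1+\sqrt p)/p^{5/8}$ for the primes with $r_p=1$ and $2p/p^{5/4}$ for the primes with $r_p=2$, times a bounded $2$-adic contribution. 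Each odd prime enters through exactly one of these two ratios, and both exceed $1$ only for a handful of small primes (namely $(1+\sqrt p)/p^{5/8}>1$ precisely for $p\le 7$, and $2p/p^{5/4}>1$ precisely for $p\le 13$), so the product of the per-prime maxima is a finite explicit number. The main obstacle is this last numerical optimization: one must verify that the maxima are attained at these small primes, tabulate the $2$-adic contributions over the finitely many $A_2$, and---in order to reach the sharp constant $1.71$ rather than a slightly larger value---retain the condition $d(d,M)\equiv 0,3\pmod 4$ so as to discard the vanishing terms and tighten the estimate.
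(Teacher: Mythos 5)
Your proposal follows essentially the same route as the paper's proof: triangle inequality plus the class number bound of Lemma~\ref{lem:HD}, the identity $(d,M)(N/d,M)=M$ to rewrite each term as $M\sqrt{d/(d,M)}$, the bound $d(d,M)\le NM\le 2|A|$ to extract the logarithm, and then a multiplicative evaluation of $\sum_{d\mid N}\sqrt{d/(d,M)}$ (the paper writes it as $\sigma_0(M)\sigma_{1/2}(N/M)$ and bounds the odd and $2$-adic parts against $|A|^{5/8}$ separately, which is exactly your per-prime ratio optimization). Your closing remark that the sharp constant may require retaining the congruence condition $d(d,M)\equiv 0,3\pmod 4$ is a fair caution, since the paper states its intermediate numerical bounds without detail; otherwise the two arguments coincide.
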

\begin{proof}
  We have by Theorem \ref{a5'aniso} and Lemma \ref{lem:HD} that
  \begin{align*}
    |\beta|
    &\leq \frac{1}{\pi}\sum_{\substack{d \mid N \\ d(d,M) \equiv 0,3 \pmod{4}}} a_2(d) (N/d,M) \sqrt{d(d,M)}\ln(d(d,M)) \\
    & \leq \frac{1}{\pi} M \ln(NM) c_2(A)
      \sum_{d \mid N} \sqrt{\frac{d}{(d,M)}},
  \end{align*}
  where $c_2(A) = 2$ if $r_{2} = 3$ and $c_2(A) = 1$, otherwise.
  We obtain
  \begin{equation*}
    |\beta| \leq \frac{1}{\pi}M \ln(NM) c_2(A) \sigma_0(M) \sigma_{1/2}(N/M).
  \end{equation*}
  If the order of $A$ is odd, we have $c_2(A) = 1$ and
  \begin{equation*}
    M \sigma_0(M) \sigma_{1/2}(N/M) = 1.76 \cdot |A|^{\frac{5}{8}}.
  \end{equation*}
  Moreover, if the order of $A$ is a power of $2$, then
  \begin{equation*}
    c_2(A) M \sigma_0(M) \sigma_{1/2}(N/M) \leq 3.05 |A|^{\frac{5}{8}}.
  \end{equation*}
  Using the multiplicativity of the divisor sum function, we see that
  \begin{equation*}
    c_2(A) M \sigma_0(M) \sigma_{1/2}(N/M) \leq 5.37 \cdot |A|^{\frac{5}{8}}.
  \end{equation*}
  Finally, using that $NM \leq 2|A|$ implies the statement of the lemma.
\end{proof}

\begin{corollary} \label{dimestaniso}
Let $(A,Q)$ be an anisotropic finite quadratic module.
If $k\geq \frac{3}{2}$, then
\begin{align*}
\dim(S_{k,A})\geq  \frac{(|A|+1)(k-1)}{24} - 3.0 - 0.86 \cdot |A|^{\frac{5}{8}}\ln (2|A|).
\end{align*}
\end{corollary}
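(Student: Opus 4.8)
The plan is to feed the sharpened anisotropic estimates back into the chain of inequalities that already produced Theorem~\ref{fundest}, but to keep everything as a one-sided (lower) bound and to exploit that for anisotropic $A$ the torsion quantities $|A[2]|$ and $|A[3]|$ are uniformly bounded. Concretely, the dimension formula \eqref{eq:dimcusp}, rewritten exactly as in the proof of Theorem~\ref{fundest}, gives
\[
\dim(S_{k,A}) = \dim(M_{2-k,A(-1)}) + \frac{d(k-1)}{12} - \Bigl(\alpha_1-\tfrac{d}{4}\Bigr) - \Bigl(\alpha_2-\tfrac{d}{3}\Bigr) - \Bigl(\alpha_3-\tfrac{d}{2}+\tfrac{\alpha_4}{2}\Bigr) - \frac{\alpha_4}{2}.
\]
Since $2-k\le \tfrac12$ we have $\dim(M_{2-k,A(-1)})\ge 0$, so this term may simply be dropped, leaving a lower bound in which each remaining error term is estimated by its absolute value.

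Next I would invoke anisotropy to pin down the individual contributions. Because $A$ is anisotropic, the only $\gamma$ with $Q(\gamma)\in\Z$ is $\gamma=0$, so $\alpha_4=1$ and $\tfrac{\alpha_4}{2}=\tfrac12$. By the structure result Lemma~\ref{lem:anisoprim}, the $2$- and $3$-components of an anisotropic module are among the short list of Table~\ref{tab:anis}; inspecting that list gives $|A[2]|\le 8$ and $|A[3]|\le 9$. Feeding these into Lemma~\ref{a1a2est} yields $|\alpha_1-\tfrac{d}{4}|\le \tfrac14\sqrt{8}=\tfrac{1}{\sqrt2}$ and $|\alpha_2-\tfrac{d}{3}|\le \tfrac{1}{3\sqrt3}(1+\sqrt{9})=\tfrac{4}{3\sqrt3}$. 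For the $\alpha_3$-term I would use \eqref{a35'}, namely $|\alpha_3-\tfrac{d}{2}+\tfrac{\alpha_4}{2}|\le \tfrac{|A[2]|}{8}+\tfrac{|\beta|}{2}\le 1+\tfrac{|\beta|}{2}$, and then insert the sharp anisotropic bound $|\beta|\le 1.71\,|A|^{5/8}\ln(2|A|)$ from Lemma~\ref{lem:a5'anisoest}.

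Collecting the constants, $\tfrac{1}{\sqrt2}+\tfrac{4}{3\sqrt3}+1+\tfrac12 = 2.977\ldots \le 3.0$ and $\tfrac{1.71}{2}=0.855\le 0.86$, so
\[
\dim(S_{k,A}) \ge \frac{d(k-1)}{12} - 3.0 - 0.86\,|A|^{5/8}\ln(2|A|).
\]
Finally I would replace $d$ by $|A|$ in the main term: from \eqref{eq:da}, $d=\tfrac{|A|}{2}+\tfrac{|A[2]|}{2}\ge \tfrac{|A|+1}{2}$ since $|A[2]|\ge 1$, and because $k\ge\tfrac32$ gives $k-1>0$ this yields $\tfrac{d(k-1)}{12}\ge \tfrac{(|A|+1)(k-1)}{24}$, completing the estimate.

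There is no real obstacle here: the genuinely hard work is already packaged in Lemma~\ref{lem:a5'anisoest} (the class-number evaluation of $\beta$ from Theorem~\ref{a5'aniso}) and in Lemma~\ref{a1a2est}. The only points requiring care are the two observations that anisotropy forces $\alpha_4=1$ and that Table~\ref{tab:anis} caps $|A[2]|$ and $|A[3]|$; everything else is the arithmetic of adding four explicit constants.
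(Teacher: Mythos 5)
Your proposal is correct and follows essentially the same route as the paper: set $\alpha_4=1$ by anisotropy, bound $|A[2]|\le 8$ and $|A[3]|\le 9$ via Lemma~\ref{lem:anisoprim}, drop $\dim(M_{2-k,A(-1)})\ge 0$, insert Lemma~\ref{lem:a5'anisoest} for $\beta$, and use $d\ge(|A|+1)/2$. The constants you collect ($2.977\ldots\le 3.0$ and $1.71/2\le 0.86$) match the paper's computation exactly.
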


\begin{proof}
Since $A$ is anisotropic, we have $\alpha_4=1$.
Hence the estimate of Theorem \ref{fundest} can be refined to give
\begin{align*}
\left|\dim(S_{k,A})-\dim(M_{2-k,A(-1)})-\frac{d(k-1)}{12}\right|\leq R'(A),
\end{align*}
where
\begin{equation*}
R'(A) =\frac{\sqrt{|A[2]|}}{4} +\frac{1+\sqrt{|A[3]|}}{3\sqrt{3}} + \frac{1}{8}|A[2]| +\frac{1}{2} + \frac{\abs{\beta}}{2}.
\end{equation*}
Since $A$ is anisotropic, Lemma \ref{lem:anisoprim} implies that
 \begin{align*}
|A[2]|&\leq 8,\\
|A[3]| &\leq 9.
\end{align*}
Using in addition \eqref{eq:da} and the estimates $N'\leq |A|$ and $N\leq 2|A|$, we obtain
\begin{equation*}
\dim(S_{k,A}) \geq \frac{(|A|+1)(k-1)}{24}-\frac{\sqrt{2}}{2}-\frac{4}{3\sqrt{3}}-\frac{3}{2} - \frac{\abs{\beta}}{2}.
\end{equation*}
Together with Lemma \ref{lem:a5'anisoest} this proves the corollary.
\end{proof}

\begin{corollary}
  \label{cor:anisbound}
  Let  $(A,Q)$ be an anisotropic finite quadratic module such that $\sig(A)\equiv - 2k\pmod{4}$.
  If $|A|\geq 4.79 \cdot 10^7$, then $S_{\frac{3}{2},A}\neq \{0\}$
  and if $k \geq 2$, then $S_{k,A}\neq \{0\}$ for $|A| \geq 5.4 \cdot 10^6$.
\end{corollary}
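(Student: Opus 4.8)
The plan is to use the explicit lower bound for $\dim(S_{k,A})$ established in Corollary~\ref{dimestaniso} and to show that the right-hand side becomes strictly positive once $|A|$ is large enough, thereby forcing $S_{k,A}\neq\{0\}$. Concretely, Corollary~\ref{dimestaniso} gives
\[
  \dim(S_{k,A})\geq \frac{(|A|+1)(k-1)}{24} - 3.0 - 0.86\cdot|A|^{5/8}\ln(2|A|),
\]
so it suffices to find, for each relevant weight, the threshold on $|A|$ beyond which the main term $\frac{(|A|+1)(k-1)}{24}$ dominates the error term $3.0 + 0.86\cdot|A|^{5/8}\ln(2|A|)$.

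First I would treat the two weight regimes separately, since the coefficient of the linear-in-$|A|$ main term depends on $k-1$. For $k=\tfrac{3}{2}$ the main term has coefficient $\frac{k-1}{24}=\frac{1}{48}$, the smallest possible value under the hypothesis $k\geq\tfrac{3}{2}$; for $k\geq 2$ the coefficient is at least $\frac{1}{24}$. In each case I would write the condition $\dim(S_{k,A})>0$ as the inequality
\[
  \frac{(|A|+1)(k-1)}{24} > 3.0 + 0.86\cdot|A|^{5/8}\ln(2|A|),
\]
and solve for $|A|$. Since the left-hand side grows linearly in $|A|$ while the right-hand side grows like $|A|^{5/8}\ln(2|A|)=o(|A|)$, the inequality holds for all sufficiently large $|A|$, and the threshold is monotone in the coefficient $k-1$. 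Plugging in the smallest admissible values of $k$ in each regime therefore yields the stated bounds: one checks that $|A|\geq 4.79\cdot 10^{7}$ suffices when $k=\tfrac{3}{2}$ and that $|A|\geq 5.4\cdot 10^{6}$ suffices when $k\geq 2$.

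The only genuine work is the numerical verification that the chosen thresholds are correct, i.e.\ that the transcendental inequality above holds at and beyond the claimed values of $|A|$. This is a routine but slightly delicate estimate, because both sides are compared near the crossover point where the $|A|^{5/8}\ln(2|A|)$ term is still sizeable relative to the linear term; one must confirm both that the inequality fails for smaller $|A|$ (so the bound is close to sharp) and that it holds for all larger $|A|$. The latter monotonicity follows since $\frac{d}{d|A|}\bigl[\frac{(|A|+1)(k-1)}{24}\bigr]$ is a positive constant while the derivative of $|A|^{5/8}\ln(2|A|)$ tends to $0$, so once the linear term overtakes the error term it stays ahead.

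I expect the main obstacle to be purely bookkeeping: one must carry enough decimal precision in evaluating $0.86\cdot|A|^{5/8}\ln(2|A|)$ at $|A|\approx 4.79\cdot 10^{7}$ and $|A|\approx 5.4\cdot 10^{6}$ to certify the sign of the difference, and confirm that the linear term dominates for all larger orders. No new structural ideas are needed beyond Corollary~\ref{dimestaniso}; the result is simply the point at which its lower bound turns positive, recorded separately for the borderline weight $k=\tfrac{3}{2}$ and for the generic case $k\geq 2$.
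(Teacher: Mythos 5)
Your proposal is correct and matches the paper's (implicit) argument: the corollary is stated immediately after Corollary~\ref{dimestaniso} precisely because it is the numerical threshold at which that lower bound becomes positive, evaluated at $k=\tfrac{3}{2}$ (coefficient $\tfrac{1}{48}$) and at $k=2$ (coefficient $\tfrac{1}{24}$, which only improves for larger $k$). Your numerical check and the monotonicity observation (the error term's derivative is decreasing and already below $\tfrac{k-1}{24}$ at the stated thresholds) are exactly what is needed.
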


We implemented the dimension formula and some of the estimates used here in python using \sage \cite{sage}.
Note that for the low weights $k=2$ and $k=\tfrac{3}{2}$, we need to calculate the dimension of the
invariants of the Weil representation. N. Skoruppa and S. Ehlen wrote a program
that determines the invariants explicitly and we included this implementation in our repository \cite{CodeRepo}.
Our complete software package, together with all required libraries, examples and documentation is available online \cite{CodeRepo}.

We used our program to obtain a list of all anisotropic finite quadratic modules
such that $S_{k,A} = \{0\}$ for $k \geq \tfrac{3}{2}$.

\begin{corollary}
  Let $(A,Q)$ be an anisotropic finite quadratic module such that $\sig(A)\equiv - 2k\pmod{4}$.
  Then $S_{k,A}= \{0\}$ for $k\geq \frac{3}{2}$ exactly if $A$ belongs to the lists given in Tables \ref{simpleanisotable32} and \ref{simpleanisotable}.
\end{corollary}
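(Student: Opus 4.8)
The plan is to reduce the classification to a finite computation and then carry it out with the \emph{exact} dimension formula. The two inputs that cut the problem down to finite size are already available: Corollary~\ref{cor:anisbound} shows that an anisotropic $A$ with $S_{k,A}=\{0\}$ for an admissible weight must satisfy $|A|<4.79\cdot 10^{7}$ when $k=\tfrac32$ and $|A|<5.4\cdot 10^{6}$ when $k\geq 2$, while Corollary~\ref{cor:kbound} forces $k\leq 14$. Together these confine the verification to finitely many pairs $(A,k)$: for each admissible $A$ the weight ranges over the finite set $\{k\geq\tfrac32:\,2k\equiv-\sig(A)\pmod 4,\ k\leq 14\}$, which contains at most about a dozen values.

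Next I would use Lemma~\ref{lem:anisoprim} to make the list of candidate modules explicit. Since an anisotropic module has level $N=2^{t}N'$ with $N'$ odd square-free and $t\in\{0,1,2,3\}$, each odd prime $p$ contributes a $p$-component that is trivial, $p^{\pm1}$, or $p^{\pm2}$ (Table~\ref{tab:anis}), while the $2$-component is drawn from the short list in that table, so $|A_2|\in\{1,2,4,8\}$. Thus every candidate is an orthogonal sum of pieces from a fixed finite menu, and the order bound converts the search into a finite enumeration: one loops over the choices for $A_2$ and then over square-free odd parts assembled from primes $p$ with $p$ (respectively $p^{2}$) below the threshold, pruning a branch as soon as the running product of orders exceeds the relevant bound.

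For each surviving candidate $(A,k)$ the quantity $\dim S_{k,A}$ is then computed exactly, not estimated, via \eqref{eq:dimcusp}. Because $A$ is anisotropic we have $\alpha_4=1$; the quantities $\alpha_1,\alpha_2$ are read off from the Gauss sums $G(n,A)$ using Propositions~\ref{prop:gaussodd}--\ref{prop:gausseven2}; and $\beta$, hence $\alpha_3$, is given in closed form by the class-number formula of Theorem~\ref{a5'aniso}. The obstruction terms $\dim M_{2-k,A(-1)}$ are handled directly: they vanish for $k>2$, equal the dimension of the space of $\Mp_2(\Z)$-invariants for $k=2$, and are given by the Serre--Stark description \eqref{eq:dimwgt1/2} for $k=\tfrac32$. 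Recording those $A$ for which $\dim S_{k,A}=0$ occurs for some admissible $k$, and removing isomorphic duplicates by comparing the Gauss sums $G(n,\cdot)$ over divisors of the level (Theorem~\ref{thm:disc-iso}), produces the two tables.

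The main obstacle is the tension between completeness and exactness. The estimates underlying $R(A)$ and $R'(A)$ give only a \emph{lower} bound for $\dim S_{k,A}$; they suffice to force non-vanishing once $|A|$ is large, but they cannot certify that $\dim S_{k,A}=0$ for the small-order modules that actually survive. This is precisely why the exact formula of Theorem~\ref{a5'aniso} is indispensable, and why the boundary weights $k=\tfrac32,2$ require computing $\dim M_{2-k,A(-1)}$ exactly rather than discarding it. The secondary difficulty is scale: a priori $|A|$ may be as large as $\approx 4.8\cdot 10^{7}$, so the enumeration, although finite by the rigidity of Lemma~\ref{lem:anisoprim}, is large and must be organized with the pruning above. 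Both difficulties are resolved by the implementation described after Corollary~\ref{cor:anisbound}, so the final step is a complete machine verification that the list of exactly simple anisotropic modules coincides, up to isomorphism, with the entries of Tables~\ref{simpleanisotable32} and~\ref{simpleanisotable}.
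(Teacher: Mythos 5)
Your proposal matches the paper's own argument: the authors reduce to a finite computation via the order bound of Corollary~\ref{cor:anisbound} and the weight bound of Corollary~\ref{cor:kbound}, enumerate the candidates using the structure result of Lemma~\ref{lem:anisoprim}, and then evaluate $\dim S_{k,A}$ exactly with the dimension formula, the class-number expression for $\beta$ from Theorem~\ref{a5'aniso}, and explicit computation of the invariant/theta spaces $M_{2-k,A(-1)}$ at the boundary weights $k=\tfrac32,2$, all carried out by their software package. Your plan is correct and is essentially the same proof.
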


\begin{remark}
  The bound in Corollary \ref{cor:anisbound} can be improved substantially for higher weights.
  However, all of the bounds obtained this way are far away from the
  correct bounds (the maximal order is $238$ for $k = \tfrac{3}{2}$ and $60$ for $k=2$) found in Tables \ref{simpleanisotable32} and \ref{simpleanisotable}.
\end{remark}

\begin{table}[h]
  \caption{The 75 anisotropic finite quadratic modules $A$ with $S_{\frac{3}{2},A} = \{0\}$. Out of these, 59 have signature 1. \label{simpleanisotable32}}
  \begin{tabularx}{\linewidth}{l X} \toprule $\sig(A)$ & $\tfrac{3}{2}$-simple finite quadratic modules\\
    \midrule
    $1$ & $\left(\Z/2N\Z, \frac{x^{2}}{4N} \right)$ for $1 \leq N < 37$ square-free and \\
      & $N \in \{ 38, 39, 41, 42, 46, 47, 51, 55, 59, 62, 66, 69, 70, 71, 78, 87, 94, 95, 105, 110, 119 \},$ \\
      & $2_3^{+3}3^{+1},$ $2_5^{+3}3^{+2}$, $2_5^{+3}5^{+1}$, $4_7^{+1}3^{-1}$, $4_1^{+1}5^{-1}$, $4_5^{-1}5^{-2},$
      $4_3^{-1}7^{-1}$, $2_7^{+1}3^{+1}5^{-2}$, $2_7^{+1}3^{-1}5^{-1}$, $2_1^{+1}3^{+1}7^{+1}, 2_7^{+1}3^{+2}7^{-1}$,
      $4_1^{+1}13^{-1}$, $4_7^{+1}3^{+1}5^{+1},$ $2_7^{+1}5^{-1}7^{+1}$, $2_7^{+1}3^{+1}13^{+1}$\\
      \midrule
   $5$ & $4_5^{-1}$, $2_7^{+1}3^{+1}$, $2_5^{+3}$, $2_1^{+1}5^{+1}$, $4_7^{+1}3^{+1}$, $4_3^{-1}3^{-1}$, $2_7^{+1}7^{-1}$,
   $4_5^{-1}5^{-1}$, $4_1^{+1}5^{+1}$, $2_3^{+3}3^{-1}$, $2_1^{+1}13^{+1}$, $4_1^{+1}3^{+2}$, $4_7^{+1}11^{+1}$, $2_1^{+1}5^{-2}$,
   $2_3^{+3}7^{+1}$, $2_3^{+3}3^{-1}5^{-1}$\\\bottomrule
  \end{tabularx}\vspace{2mm}
\end{table}

\begin{table}[h]
  \caption{\label{simpleanisotable} Anisotropic finite quadratic modules $A$ with $S_{k,A} = \{0\}$ for $k \geq 2$.}
  \begin{tabularx}{\linewidth}{l l X} \toprule $k$ & $\sig(A)$ & genus symbols\\\midrule
    $2$ & 0 & $1^{+1}$, $5^{-1}$, $2_1^{+1}4_7^{+1}$, $3^{+1}11^{-1}$, $2^{-2}5^{+1}$, $2_2^{+2}3^{+1}$,
              $2_6^{+2}3^{-1}$, $13^{-1}$, $2_6^{+2}7^{+1}$, $17^{+1}$, $3^{-1}7^{-1}$, $2_1^{+1}4_1^{+1}3^{+1}$, $2_6^{+2}3^{+1}5^{+1}$\\
    $2$ & 4 & $2^{-2}$, $3^{+2}$, $5^{+1}$, $5^{-2}$, $2_1^{+1}4_3^{-1}$, $3^{-1}11^{-1}$, $2^{-2}5^{-1}$,
           $2_2^{+2}3^{-1}$, $2_6^{+2}3^{+1}$,
           $13^{+1}$, $2_2^{+2}7^{+1}$, $17^{-1}$, $3^{+1}7^{-1}$, $2_1^{+1}4_1^{+1}3^{-1}$, $2_2^{+2}3^{-1}5^{-1}$\\\midrule
    $\frac{5}{2}$ & 3 & $2_3^{+3}$, $4_3^{-1}$, $4_3^{-1}5^{-1}$, $2_1^{+1}3^{-1}$, $2_7^{+1}3^{+2}$, $2_1^{+1}7^{+1}$,
                  $2_1^{+1}11^{-1}$, $4_1^{+1}7^{+1}$, $2_7^{+1}5^{+1}$, $4_1^{+1}3^{-1}$, $4_5^{-1}3^{+1}$, $2_1^{+1}3^{-1}5^{-1}$\\
    $\frac{5}{2}$ & 7 & $2_7^{+1}$, $4_7^{+1}$, $2_1^{+1}3^{+1}$\\\bottomrule
  \end{tabularx}
 \begin{tabularx}{\linewidth}{l l X  l p{3cm}} \toprule $k$ & $\sig(A)$ & genus symbols & $\sig(A)$ & genus symbols\\\midrule
    $3$ & $2$ & $3^{-1}$, $2_2^{+2}$, $2^{-2}3^{+1}$, $7^{+1}$, $2_1^{+1}4_1^{+1}$, $11^{-1}$, $3^{+1}5^{+1}$, $3^{-1}5^{-1}$, $2_2^{+2}5^{-1}$, $23^{+1}$
        & $6$ & $3^{+1}$\\\midrule
    $\frac{7}{2}$ & $1$ & $2_1^{+1}$, $4_1^{+1}$, $2_7^{+1}3^{-1}$, $2_1^{+1}5^{-1}$, $4_3^{-1}3^{+1}$ & $5$ & $4_5^{-1}$\\\midrule
    $4$ & $0$ & $1^{+1}$, $5^{-1}$
        & $4$ & $5^{+1}$, $2^{-2}$\\\midrule
    $\frac{9}{2}$ & 3 & $4_3^{-1}$, $2_1^{+1}3^{-1}$
                  & 7 & $2_7^{+1}$\\\midrule
    $5$ & $2$ & $3^{-1}$, $2_2^{+2}$, $7^{+1}$
        & $6$ & $3^{+1}$\\\midrule
    $\frac{11}{2}$ & $1$ & $2_1^{+1}$, $4_1^{+1}$ & &\\\midrule
    $6$ & $0$ & $1^{+1}$ & &\\\midrule
    $7$ & $2$ & $3^{-1}$ & &\\\midrule
    $\frac{15}{2}$ & 1 & $2_1^{+1}$ & &\\\midrule
    $8$, $10$, $14$ & 0 & $1^{+1}$& &\\\bottomrule
  \end{tabularx}
\end{table}

\subsection{Differences of dimensions}
Here we give lower bounds for the difference of the dimensions of
$S_{k,A\oplus B}$ and $S_{k,A}$, where $A$ is an arbitrary
finite quadratic module and $B$ is an isotropic finite quadratic module of order
$p^2$ for large primes $p$.
We have to estimate the differences of the quantities occurring in the
dimension formula \eqref{dim1}. To indicate the dependency of the finite quadratic module,
we write $\alpha_i(A)$ for the quantities $\alpha_i$ associated to $A$ defined at the beginning of Section \ref{sect:de}.
We will make use of the following principle.
\begin{definition}
  If $A$ is a finite quadratic module and $U \subset A$ is a subgroup,
  we let
  \[
     U^{\perp} = \{ a \in A\ \mid\ (a,u) = 0 \text{ for all } u \in U \}
  \]
  be the \emph{orthogonal complement} of $U$.

  If $U$ is a \emph{totally isotropic subgroup}, that is, we have $Q(u) = 0$ for all
  $u \in U$, then the pair $(U^{\perp}/U,Q)$ also defines a finite quadratic module.
  We have $\abs{A} = \abs{U^{\perp}/U} \abs{U}^{2}$ and $\sig(A) = \sig(U^{\perp}/U)$.
\end{definition}

\begin{proposition}
  \label{up}
  Let $A$ be a finite quadratic module and let $B = U^{\perp}/U$ for
  some totally isotropic subgroup $U \subset A$.
  We have an injection $S_{k,B} \hookrightarrow S_{k,A}$
  given by $f \mapsto F$ with $F_{\alpha} = 0$  for $\alpha \not\in U^{\perp}$
  and $F_{\alpha} = f_{\alpha + U}$ for $\alpha \in U^{\perp}$.
\end{proposition}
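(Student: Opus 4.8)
The plan is to verify directly that the map $f \mapsto F$ is a well-defined $\C[A]$-valued modular form of weight $k$ for the representation $\rho_A$, that it sends cusp forms to cusp forms, and that it is injective. The injectivity is immediate: if $F = 0$, then in particular $F_\alpha = f_{\alpha+U} = 0$ for all $\alpha \in U^\perp$, and since every coset of $U$ in $U^\perp$ has a representative in $U^\perp$, this forces $f \equiv 0$ on $B = U^\perp/U$. The cusp form property and the holomorphicity at $\infty$ will also transfer automatically, because each component $F_\alpha$ is either zero or a copy of some component $f_{\alpha+U}$ of $f$, so the Fourier expansions of $F$ are built from those of $f$; thus the main content is to check equivariance under the generators $S$ and $T$ of $\Mp_2(\Z)$.

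First I would record what must be checked: writing $F = \sum_{\alpha \in A} F_\alpha \frake_\alpha$ with $F_\alpha = f_{\alpha+U}$ for $\alpha \in U^\perp$ and $F_\alpha = 0$ otherwise, I need $\rho_A(T) F = F|_T$ and $\rho_A(S) F = F|_S$ to follow from the corresponding identities for $f$ under $\rho_B$. For $T$, the action \eqref{actionT} is diagonal, $\rho_A(T)\frake_\alpha = e(-Q(\alpha))\frake_\alpha$, and the key point is that for $\alpha \in U^\perp$ the value $Q(\alpha) \in \Q/\Z$ depends only on the coset $\alpha + U$: indeed, for $u \in U$ we have $Q(\alpha+u) = Q(\alpha) + (\alpha,u) + Q(u) = Q(\alpha)$ since $(\alpha,u)=0$ (as $\alpha \in U^\perp$) and $Q(u)=0$ (as $U$ is totally isotropic). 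Hence the phase $e(-Q(\alpha))$ matches the phase appearing in $\rho_B(T)f_{\alpha+U}$, and the $T$-equivariance of $F$ reduces to that of $f$ componentwise; the components outside $U^\perp$ are zero on both sides.

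The main obstacle will be the $S$-equivariance, since \eqref{actionS} mixes all components via the character sum $\sum_{\delta} e((\alpha,\delta))$. Here I would compute $(\rho_A(S)F)_\delta = \frac{e(\sig(A)/8)}{\sqrt{|A|}} \sum_{\alpha \in U^\perp} e((\delta,\alpha)) f_{\alpha+U}$ and analyze the inner sum by splitting $A$ according to cosets of $U^\perp$. The crucial orthogonality fact is that $\sum_{\alpha \in U^\perp} e((\delta,\alpha))$ vanishes unless $\delta \in (U^\perp)^\perp = U$, so the resulting form is supported on $U^\perp$, and for $\delta \in U^\perp$ the sum over $\alpha \in U^\perp$ collapses, modulo $U$, to a sum over $B = U^\perp/U$. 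Using $|A| = |U^\perp/U|\,|U|^2 = |B|\,|U|^2$ and $\sig(A) = \sig(B)$ from the definition preceding the proposition, together with the nondegeneracy of the induced pairing on $B$, the normalizing constants will reorganize into exactly the factor $\frac{e(\sig(B)/8)}{\sqrt{|B|}}$ governing $\rho_B(S)$. This bookkeeping with the Gauss-type character sums over $U$ and $U^\perp$, and confirming that the surviving terms reproduce $\rho_B(S)f$ on $B$, is the only genuinely delicate step; once it is done, the relations $S^2 = (ST)^3 = Z$ hold automatically since they are generated by $S$ and $T$, and the proof is complete.
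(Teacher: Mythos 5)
Your overall strategy is sound and the argument goes through; note, however, that the paper does not prove this proposition at all but simply cites Theorem~4.1 of Scheithauer's preprint \cite{Sch}, so your direct verification is a genuinely self-contained alternative (and is essentially the standard proof of the lifting, here specialized from Scheithauer's more general setting of isotropic subgroups). The $T$-step, the injectivity, the transfer of the cusp condition, and the reduction to the generators $S,T$ (the relations $S^2=(ST)^3=Z$ being automatic since $\rho_A$ is already a representation) are all correct as you describe them. One point needs repair in the $S$-step: you assert that $\sum_{\alpha\in U^\perp}e((\delta,\alpha))$ vanishes unless $\delta\in(U^\perp)^\perp=U$ and then conclude that $\rho_A(S)F$ is supported on $U^\perp$ --- as written this is a non sequitur (and the unweighted sum is not the relevant one, since the summands carry the weights $f_{\alpha+U}$). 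The correct bookkeeping is: split the sum over $\alpha\in U^\perp$ into cosets of $U$ \emph{inside} $U^\perp$ (not cosets of $U^\perp$ in $A$), use that $f_{\alpha+u+U}=f_{\alpha+U}$ and $(\alpha+u,\delta)=(\alpha,\delta)+(u,\delta)$, and factor out $\sum_{u\in U}e((u,\delta))$, which equals $\lvert U\rvert$ for $\delta\in U^\perp$ and $0$ otherwise by character orthogonality on $U$. This gives the support on $U^\perp$, and then $\lvert U\rvert/\sqrt{\lvert A\rvert}=1/\sqrt{\lvert B\rvert}$ together with $\sig(A)=\sig(B)$ yields $(\rho_A(S)F)_\delta=(\rho_B(S)f)_{\delta+U}$ exactly as you intend. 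With that correction your proof is complete and arguably more useful to a reader than the bare citation in the paper.
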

\begin{proof}
  See Theorem 4.1 in \cite{Sch}.
\end{proof}

\begin{proposition}
  If $U \subset A$ is a maximal totally isotropic subgroup, then $A_0 = U^{\perp}/U$ is anisotropic.
  The isomorphism class of $A_0$ is independent of the choice of $U$
  and we call $A_{0}$ the \emph{anisotropic reduction} of $A$.
\end{proposition}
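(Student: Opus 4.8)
The plan is to treat the two assertions separately, first the anisotropy of $A_0$ and then the independence of its isomorphism class.

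\emph{Anisotropy.} Here I would argue by contradiction. Suppose $U$ is maximal totally isotropic but $A_0 = U^{\perp}/U$ is isotropic, so there is $x \in U^{\perp}$ with $x \notin U$ and $Q(x) = 0 \in \Q/\Z$. Set $U' = U + \Z x$. Since $x \in U^{\perp}$ we have $(u,x) = 0$ for all $u \in U$, and because $U$ is totally isotropic, $Q(u + mx) = Q(u) + m(u,x) + m^{2} Q(x) = 0$ for all $u \in U$ and $m \in \Z$. Hence $U'$ is totally isotropic and strictly contains $U$, contradicting maximality. This shows $A_0$ is anisotropic, and moreover that a totally isotropic subgroup is maximal exactly when its reduction is anisotropic.

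\emph{Independence: strategy.} For the second assertion I would induct on $|A|$, proving simultaneously that the reduction is well defined and ``associative''. The associativity I need is the elementary compatibility lemma: if $U \subseteq W \subseteq U^{\perp}$ with $U$ and $W$ totally isotropic in $A$, then $W/U$ is totally isotropic in $U^{\perp}/U$ and there is a canonical isomorphism of finite quadratic modules $W^{\perp}/W \cong (W/U)^{\perp}/(W/U)$, the perpendicular on the right being taken in $U^{\perp}/U$. Granting this, for a maximal totally isotropic $U$ and a subgroup $\ell \subseteq U$ of prime order, a pullback argument shows $U/\ell$ is maximal totally isotropic in $\ell^{\perp}/\ell$ and $U^{\perp}/U \cong (U/\ell)^{\perp}/(U/\ell)$; so by the inductive hypothesis applied to the smaller module $\ell^{\perp}/\ell$ its reduction $A_0(\ell^{\perp}/\ell)$ is well defined and isomorphic to $U^{\perp}/U$. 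Comparing the reductions coming from two maximal subgroups containing isotropic lines $\ell$ and $\ell'$ respectively thus reduces to showing that $A_0(\ell^{\perp}/\ell) \cong A_0((\ell')^{\perp}/\ell')$ for any two isotropic lines $\ell, \ell'$.

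\emph{Comparing two lines.} Here I would split into two cases. If $\ell' \subseteq \ell^{\perp}$ (in particular whenever $\ell$ and $\ell'$ lie in different primary components), then $U_0 = \ell + \ell'$ is totally isotropic; applying the associativity lemma to $\ell \subseteq U_0$ and to $\ell' \subseteq U_0$, and invoking the inductively valid independence for $\ell^{\perp}/\ell$ and $(\ell')^{\perp}/\ell'$, both reductions equal $A_0(U_0^{\perp}/U_0)$. If $\ell' \not\subseteq \ell^{\perp}$, then $\ell$ and $\ell'$ lie in the same $p$-component, are distinct, and are paired nondegenerately, so $P = \ell \oplus \ell'$ is a nondegenerate (hyperbolic) plane of order $p^{2}$. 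A nondegenerate subgroup splits off as an orthogonal direct summand, $A = P \perp P^{\perp}$, and a direct computation in $P$ gives $\ell^{\perp} = \ell \oplus P^{\perp}$, whence $\ell^{\perp}/\ell \cong P^{\perp} \cong (\ell')^{\perp}/\ell'$ already as quadratic modules. In all cases the two reductions agree, closing the induction.

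\emph{Main obstacle.} The delicate point is the non-orthogonal case, which is exactly the content of Witt's theorem: it rests on splitting off the nondegenerate plane and on the fact that a hyperbolic plane is reduced to $0$ by either of its isotropic lines. I would establish $A = P \perp P^{\perp}$ via the surjection $A \to \widehat{P}$, $a \mapsto (a,\cdot)|_{P}$, whose kernel is $P^{\perp}$ and whose restriction to $P$ is an isomorphism by nondegeneracy, so that an order count forces $P \cap P^{\perp} = 0$ and $P + P^{\perp} = A$. One should also confirm that $P$ is genuinely nondegenerate in the $2$-adic case, where $Q$ is $\tfrac{1}{2^{k}}\Z$-valued; this is immediate since the off-diagonal pairing $(\ell,\ell')$ is a unit. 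Alternatively the independence statement can be cited from Nikulin \cite{Ni}, but the inductive argument above is self-contained and uses only the orthogonal splitting of nondegenerate summands.
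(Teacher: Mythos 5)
Your proof of anisotropy is exactly the paper's: lift a nonzero isotropic class of $U^{\perp}/U$ to an element $a\in U^{\perp}\setminus U$ and enlarge $U$. For the independence of the isomorphism class, however, you take a genuinely different route. The paper settles uniqueness in one line by combining the classification of anisotropic finite quadratic modules (Table \ref{tab:anis}) with the invariance of the signature and of the discriminant class under passage to $U^{\perp}/U$; in other words, it separates the possible anisotropic quotients by invariants that do not depend on $U$. You instead give a self-contained Witt-cancellation argument: induct on $|A|$, use the compatibility isomorphism $W^{\perp}/W\cong (W/U)^{\perp}/(W/U)$ to reduce to comparing the reductions of $\ell^{\perp}/\ell$ for two isotropic lines $\ell,\ell'$, and then treat the orthogonal case via the common totally isotropic subgroup $\ell+\ell'$ and the non-orthogonal case by splitting off the nondegenerate plane $P=\ell\oplus\ell'$, for which $\ell^{\perp}/\ell\cong P^{\perp}\cong(\ell')^{\perp}/\ell'$. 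All of these steps are correct (the orthogonal splitting $A=P\oplus P^{\perp}$ follows from the surjection onto $\Hom(P,\Q/\Z)$ exactly as you say, and the $2$-adic case causes no trouble since $(\ell,\ell')=\tfrac12$ generates the pairing). What your approach buys is independence from the classification: it also shows along the way that all maximal totally isotropic subgroups have the same order, which is a point the paper's invariant count quietly relies on, since $|U^{\perp}/U|=|A|/|U|^{2}$ and signature together with the discriminant class alone do not always distinguish anisotropic modules of different orders (e.g.\ $1^{+1}$ and $5^{-2}13^{-2}$ share both invariants). What the paper's approach buys is brevity, since the classification of anisotropic modules is already established there and is needed elsewhere in any case.
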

\begin{proof}
  It is easy to see that $U^{\perp}/U$ is anisotropic for a maximal totally isotropic subgroup:
  Suppose that $x \in U^{\perp}/U$ is isotropic, $x \neq 0$.
  Then $x = a + U$ for some $a \in U^{\perp}$ with $Q(a) = 0$.
  However, since $a \in U^{\perp}$ and $a \not\in U$, this implies that the subgroup $U'$ of $A$
  generated by $U$ and $a$ is isotropic and strictly larger than $U$.

  The uniqueness follows from the classification of the anisotropic finite quadratic modules
  (see Table \ref{tab:anis}) and the fact that $d(U^{\perp}/U) = d(A)$ and $\sig(U^{\perp}/U)=\sig(A)$.
\end{proof}

\begin{lemma}
  \label{lem:qdiff}
  Let $A$ be an arbitrary finite quadratic module,
  and let $B$ be an isotropic finite quadratic module of order $p^2$, where $p$ is a prime not dividing $6|A|$. Then
  \begin{align}
    \label{eq:qdiff}
    d_{A\oplus B}-d_A&=\frac{|A|}{2}(p^2-1),
  \end{align}
  and
  \begin{align*}
    |\alpha_1(A\oplus B)-\alpha_1(A)-\frac{|A|}{8}(p^2-1)|&\leq \frac{1}{2}\sqrt{|A[2]|},\\
    |\alpha_2(A\oplus B)-\alpha_2(A)-\frac{|A|}{6}(p^2-1)|&\leq \frac{2}{3\sqrt{3}}\left(1+\sqrt{|A[3]|}\right),\\
    |\alpha_3(A\oplus B)-\alpha_3(A)-\frac{|A|}{4}(p^2-1)|&\leq \frac{p-1}{2}|A|,\\
    \alpha_4(A\oplus B)-\alpha_4(A)&\leq (p-1)|A|.
  \end{align*}
\end{lemma}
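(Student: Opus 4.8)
The plan is to treat the five assertions in increasing order of difficulty, reducing everything back to the finite quadratic module $A$ by exploiting that $p$ is odd and prime to $|A|$. Since $|B|=p^2$ is odd and prime to $3$, we have $B[2]=B[3]=\{0\}$, hence $(A\oplus B)[2]=A[2]$ and $(A\oplus B)[3]=A[3]$. In particular \eqref{eq:da} gives $d_{A\oplus B}=\tfrac{|A|p^2}{2}+\tfrac{|A[2]|}{2}$, and subtracting $d_A=\tfrac{|A|}{2}+\tfrac{|A[2]|}{2}$ yields \eqref{eq:qdiff}. For the $\alpha_1$- and $\alpha_2$-estimates I would simply apply Lemma~\ref{a1a2est} to both $A$ and $A\oplus B$; since $(A\oplus B)[2]=A[2]$ and $(A\oplus B)[3]=A[3]$, the triangle inequality together with the identities $(d_{A\oplus B}-d_A)/4=\tfrac{|A|}{8}(p^2-1)$ and $(d_{A\oplus B}-d_A)/3=\tfrac{|A|}{6}(p^2-1)$ gives the two middle estimates, with the bounds doubling to $\tfrac12\sqrt{|A[2]|}$ and $\tfrac{2}{3\sqrt3}(1+\sqrt{|A[3]|})$.

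The key elementary observation for the remaining two parts is a $p$-adic separation of values: every value $Q(\gamma)$ with $\gamma\in A$ has denominator dividing the level $N$ of $A$, which is prime to $p$, whereas every value $Q(b)$ with $b\in B$ has $p$-power denominator. Hence $Q(\gamma)+Q(b)=0$ in $\Q/\Z$ forces $Q(\gamma)\in\Z$ and $Q(b)=0$. Writing $I(A)=\{\gamma\in A:\ Q(\gamma)\in\Z\}$ and $z=|\{b\in B:\ Q(b)=0\}|$, this shows $I(A\oplus B)=I(A)\times\{b:\ Q(b)=0\}$, so $|I(A\oplus B)|=z\,|I(A)|$. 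Combining this with the decomposition $\alpha_4=\tfrac12|A[2]\cap I(A)|+\tfrac12|I(A)|$ from the proof of Lemma~\ref{a4} and the fact that $(A\oplus B)[2]\cap I(A\oplus B)=A[2]\cap I(A)$, I obtain $\alpha_4(A\oplus B)-\alpha_4(A)=\tfrac12(z-1)\,|I(A)|$. The isotropic finite quadratic modules of order $p^2$ are exactly the hyperbolic plane $p^{\pm2}$ (with $z=2p-1$) and the cyclic module $\calA_{p^2}^{t}$ (with $z=p$), so $z\le 2p-1$ in all cases, and $|I(A)|\le|A|$ yields the $\alpha_4$-bound $(p-1)|A|$.

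For $\alpha_3$ I would start from the identity $\alpha_3=\tfrac d2-\tfrac{\alpha_4}2-\tfrac12\sum_{\gamma\in A[2]}\B(Q(\gamma))-\tfrac{\beta}{2}$ derived just before \eqref{a5'}. Since the $A[2]$-sum is unchanged and the $d/2$-term contributes the main term $\tfrac{|A|}{4}(p^2-1)$, the error equals $-\tfrac12\big(\alpha_4(A\oplus B)-\alpha_4(A)\big)-\tfrac12\big(\beta(A\oplus B)-\beta(A)\big)$. To evaluate $\Delta_\beta:=\beta(A\oplus B)-\beta(A)$ I would use the Fourier expansion of $\B$ from the proof of Theorem~\ref{a5'aniso} and $G(n,A\oplus B)=G(n,A)G(n,B)$ from \eqref{basic3} to write $\Delta_\beta=-\tfrac1\pi\sum_{n\ge1}\tfrac1n\,\Im\big(G(n,A)(G(n,B)-1)\big)$. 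For the hyperbolic plane, Proposition~\ref{prop:gaussodd} and \eqref{basic3} give the clean evaluation $G(n,B)=p^2$ if $p\mid n$ and $G(n,B)=p$ otherwise, so $G(n,B)-1=(p-1)+(p^2-p)[p\mid n]$; here $G(n,B)$ is real, which is what makes the imaginary part factor out. Splitting the sum accordingly and using $G(pm,A)=G(m,(A,pQ))$, where $(A,pQ)$ denotes $A$ with quadratic form scaled by the unit $p$, I arrive at $\Delta_\beta=(p-1)\big(\beta(A)+\beta(A,pQ)\big)$, where $\beta(A,pQ)$ is the quantity \eqref{a5'} for the module $(A,pQ)$.

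Finally I would assemble the bound. Writing $\beta(A)=P(A)-\tfrac{|A|}2+\tfrac{|I(A)|}2$ with $P(A)=\sum_{\gamma\in A}\{Q(\gamma)\}$ (the fractional part), and noting $I(A,pQ)=I(A)$ so that the same expression holds for $(A,pQ)$, the error becomes $-\tfrac{p-1}{2}\big(|I(A)|+\beta(A)+\beta(A,pQ)\big)=-\tfrac{p-1}{2}\big(2|I(A)|+P(A)+P(A,pQ)-|A|\big)$. Because $0\le P(\,\cdot\,)\le|A|-|I(A)|$, the bracket lies in $[-|A|,\,|A|]$, which gives precisely the desired bound $\tfrac{p-1}{2}|A|$. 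The main obstacle is exactly this $\alpha_3$-step: a termwise estimate only yields a useless $O(p^2|A|)$ bound, so one must extract genuine cancellation, and this is supplied by the two-valued Gauss sum $G(n,B)\in\{p,p^2\}$ together with the scaling identity $G(pm,A)=G(m,(A,pQ))$. The cyclic module $\calA_{p^2}^t$ is handled by the same method, the only difference being the evaluation of the quadratic Gauss sum $G(n,\calA_{p^2}^t)$, which now carries an additional contribution at $p\|n$.
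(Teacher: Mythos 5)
Your treatment of \eqref{eq:qdiff}, of $\alpha_1$ and $\alpha_2$ (triangle inequality with Lemma~\ref{a1a2est} and $(A\oplus B)[2]=A[2]$, $(A\oplus B)[3]=A[3]$), and of $\alpha_4$ (multiplicativity of the count of isotropic vectors via the $p$-adic separation of values, plus $\kappa_0(B)\leq 2p-1$) coincides with the paper's proof. For $\alpha_3$ you take a genuinely different route: the paper simply computes $S(x,B)=\sum_{b\in B}\{x-Q(b)\}$ in closed form by splitting the sum over $B$ along the subgroup $p B$, obtaining $p\{x\}+(p-1)\{px\}+\tfrac{(p-1)^2}{2}$ for the hyperbolic plane and an analogous expression with $\sum_{a\in(\Z/p\Z)^\times}\{px-\eps a^2/p\}$ for the cyclic module; in both cases the deviation from $\tfrac{p^2-1}{2}$ is visibly a sum of $2(p-1)$ fractional parts minus $(p-1)$, which gives the bound at once. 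Your Fourier/Gauss-sum computation of $\Delta_\beta$ is correct for the hyperbolic plane and in fact reproduces exactly the paper's expression $-\tfrac{p-1}{2}\bigl(2|I(A)|+P(A)+P(A,pQ)-|A|\bigr)$, so that case is fine.

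The gap is in the last sentence, where you dismiss the cyclic modules $(p^2)^{\pm1}$ as ``the same method, only a different Gauss sum evaluation.'' That is not so. For $B=\calA_{p^2}^t$ and $p\,\|\,n$ one has $G(n,B)=p\,g_p(tn/p)=\epsilon_p\,p^{3/2}\leg{tn/p}{p}$, which is character-twisted, of modulus $p^{3/2}$, and purely imaginary when $p\equiv 3\pmod 4$; the reality of $G(n,B)-1$, which is precisely what let you pull the factor out of $\Im(\cdot)$, fails. Carrying your method through, $\Delta_\beta$ acquires, besides $(p-1)\beta(A)$ and terms $\beta(A,p^2Q)-\beta(A,pQ)$, a twisted contribution $T$ that unwinds (via $\leg{m}{p}=g_p^{-1}\sum_a\leg{a}{p}e(am/p)$) into $\sum_{\gamma}\sum_{a}\leg{a}{p}\B(pQ(\gamma)+a/p)$, bounded only by $\tfrac{(p-1)|A|}{2}$. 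Assembling the pieces by the triangle inequality then yields an error bound of roughly $\tfrac{(p-1)|A|}{2}+\tfrac{|A|}{2}$, which overshoots the asserted $\tfrac{p-1}{2}|A|$; the required cancellation between $T$, $P(A,pQ)$, $P(A,p^2Q)$ and the main term is exactly what gets destroyed by separating the sum into Gauss-sum pieces. So the cyclic case is not covered by your argument as written. The fix is either to redo that bookkeeping carefully (recombining the pieces into a manifest sum of $p-1$ fractional parts per $\gamma$), or, more simply, to follow the paper and evaluate $S(x,B)$ directly for $B=(p^2)^{\pm1}$, which avoids the issue entirely.
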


\begin{proof}
  To prove \eqref{eq:qdiff}, we use \eqref{eq:da}.
  Since $p\neq 2$, we have $B[2]=\{0\}$, and therefore
  \[
  d_{A\oplus B} = \frac{|A\oplus B|}{2}+\frac{|A[2]\oplus B[2]|}{2}= \frac{p^2|A|}{2}+\frac{|A[2]|}{2}.
  \]
  This implies the stated formula.

  The bounds for the differences of $\alpha_1$ and $\alpha_2$ directly follow from \eqref{eq:qdiff} and
  Lemma~\ref{a1a2est} combined with the fact that $B[3]=\{0\}$.

  Let $\kappa_0(A)$ denote the number of isotropic vectors in $A$.
  For $\alpha_4$ we use that
  \[
  \alpha_4(A)=\frac{1}{2}\left(\kappa_0(A) + \kappa_0(A[2])\right).
  \]
  Since the $2$-torsion of $B$ is trivial and since $p$ does not divide $|A|$, we find
  \begin{align*}
    \alpha_4(A\oplus B)-\alpha_4(A)&= \frac{1}{2}\left(\kappa_0(A\oplus B)-\kappa_0(A)\right)\\
    &= \frac{1}{2} \kappa_0(A)\left(\kappa_0(B) -1\right).
  \end{align*}
  The quantity $\kappa_0(B)$ is bounded by $2p-1$ by the Jordan decomposition (Theorem \ref{thm:jordan}),
  and $\kappa_0(A)$ is trivially bounded by $|A|$.
  This gives the claimed bound.

  We now turn to the estimate for $\alpha_3$. For $x\in \R$ we write $\{x\}=x-[x]\in [0,1)$ for the fractional part of of $x$.
  By definition we have
  \begin{align*}
    \alpha_3(A)&= \sum_{\gamma\in A/\{\pm 1\}} \{-Q(\gamma)\}\\
    &=\frac{1}{2}\sum_{\gamma\in A} \{-Q(\gamma)\}
    +\frac{1}{2}\sum_{\gamma\in A[2]} \{-Q(\gamma)\}.
  \end{align*}
  Consequently,
  \begin{align}
    \label{eq:a3diff}
    \alpha_3(A\oplus B)-\alpha_3(A)
    &=\frac{1}{2}\sum_{\gamma\in A\oplus B} \{-Q(\gamma)\}
    -\frac{1}{2}\sum_{\gamma\in A} \{-Q(\gamma)\}.
  \end{align}
  For an arbitrary $x\in \R$, we now estimate the sum
  \begin{align}
    \label{eq:bsum}
    S(x,B)= \sum_{\gamma\in B} \{x-Q(\gamma)\}.
  \end{align}

  If $B$ is the level $p$ isotropic finite quadratic module (which has genus symbol $p^{\eps \cdot 2}$ with $\eps = (-1)^\frac{p-1}{2}$),
  we have
  \begin{align*}
    S(x,B)&= \sum_{a,b\in \Z/p\Z} \{x-\frac{ab}{p}\}\\
    &= p\{x\} +(p-1) \sum_{b\in \Z/p\Z} \{x+\frac{b}{p}\}\\
    &= p\{x\} +(p-1) \sum_{b=0}^{p-1} \left(\frac{1}{p}\{p x\}+\frac{b}{p}\right)\\
    &= p\{x\} +(p-1)\{p x\}+\frac{(p-1)^2}{2}
  \end{align*}
  Inserting this into \eqref{eq:a3diff}, we get
  \begin{align*}
    \alpha_3(A\oplus B)-\alpha_3(A)
    &=\frac{1}{2}(p-1)\sum_{\gamma\in A} \left(\{-Q(\gamma)\}+\{-p Q(\gamma)\}+\frac{(p-1)}{2}\right).
  \end{align*}
  and therefore
  \begin{align}
    \label{eq:bsum2}
    |\alpha_3(A\oplus B)-\alpha_3(A)-\frac{|A|}{4}(p^2-1)|
    &\leq \frac{p-1}{2}|A|.
  \end{align}
  If $B$ is a finite quadratic module of level $q=p^2$,
  we slightly modify the above argument as follows.
  Let $\eps \in \Z$ with $\leg{2\eps}{p} = \pm 1$, such that
  $B$ has the genus symbol $q^{\pm 1}$.
  In this case we have
  \begin{align*}
    S(x,B)&= \sum_{a\in \Z/p^2\Z} \{x-\eps\frac{ a^2}{p^2}\}\\
    &= \sum_{a\in \Z/p\Z} \sum_{b\in \Z/p\Z} \{x-\eps\frac{ (a+pb)^2}{p^2}\}\\
    &= p\{x\} +\frac{(p-1)^2}{2}+\sum_{a\in (\Z/p\Z)^\times}\{p( x-\eps\frac{ a^2}{p^2})\}.
  \end{align*}
  By means of this identity, we obtain the same bound \eqref{eq:bsum2} as in the earlier case.
\end{proof}

\begin{theorem}
  \label{thm:qdiff}
  Let $A$ be an arbitrary finite quadratic module, and let $B$ be an isotropic finite quadratic module of order $p^2$, where $p$ is a prime not dividing $6|A|$.
  Then for $k \geq \tfrac{3}{2}$, we have
  \begin{align*}
    \dim(S_{k,A\oplus B})-\dim(S_{k,A}) &\geq \frac{|A|(p^2-1)}{24}\left( k-1 -\frac{36p}{p^2-1}\right). 
  \end{align*}
\end{theorem}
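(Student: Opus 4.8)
The plan is to combine the dimension formula \eqref{eq:dimcusp} with the difference estimates for the individual quantities $\alpha_i$ established in Lemma~\ref{lem:qdiff}. Recall that \eqref{eq:dimcusp} expresses $\dim(S_{k,A})-\dim(M_{2-k,A(-1)})$ as $d_A(k+12)/12-\alpha_1(A)-\alpha_2(A)-\alpha_3(A)-\alpha_4(A)$. First I would write down the analogous expression for $A\oplus B$ and subtract, so that the difference $\dim(S_{k,A\oplus B})-\dim(S_{k,A})$ equals $\frac{k+12}{12}(d_{A\oplus B}-d_A)-\sum_{i=1}^4\big(\alpha_i(A\oplus B)-\alpha_i(A)\big)$, up to the term $\dim(M_{2-k,A(-1)\oplus B(-1)})-\dim(M_{2-k,A(-1)})$. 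Since $k\geq\tfrac32$ and these are dimensions of spaces of holomorphic modular forms, the obstruction space difference is $\geq -\dim(M_{2-k,A(-1)})$; but rather than track it, I would use Proposition~\ref{up}: the totally isotropic subgroup of $B$ gives $M_{2-k,A(-1)}\hookrightarrow M_{2-k,A(-1)\oplus B(-1)}$, so that difference is nonnegative and can simply be dropped when proving a lower bound.

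Next I would insert the four estimates from Lemma~\ref{lem:qdiff}. Using \eqref{eq:qdiff}, the main term $\frac{k+12}{12}\cdot\frac{|A|}{2}(p^2-1)$ appears. From the $\alpha_i$ estimates, each $\alpha_i(A\oplus B)-\alpha_i(A)$ has a prescribed main term whose sum is $\big(\tfrac18+\tfrac16+\tfrac14\big)|A|(p^2-1)=\tfrac{13}{24}|A|(p^2-1)$ (with $\alpha_4$ contributing no main term), leaving error terms. Subtracting these main terms from $\frac{k+12}{24}|A|(p^2-1)$ yields the leading coefficient $\frac{k-1}{24}|A|(p^2-1)$, matching the claimed bound. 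I would collect all error terms: $\tfrac12\sqrt{|A[2]|}$, $\tfrac{2}{3\sqrt3}(1+\sqrt{|A[3]|})$, the $\alpha_3$ error $\tfrac{p-1}{2}|A|$, and the $\alpha_4$ bound $(p-1)|A|$.

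The main obstacle is bookkeeping the error terms so that they are dominated by a clean multiple of $p$. The dominant errors are the $\alpha_3$ and $\alpha_4$ contributions, each of size $O(p|A|)$; specifically they combine to at most $\tfrac{p-1}{2}|A|+(p-1)|A|=\tfrac32(p-1)|A|$, while the contributions from $\alpha_1,\alpha_2$ are $O(\sqrt{|A|})$ and hence lower order in $p$. Here I would use that $p$ does not divide $6|A|$, so $p$ is comparatively large, and that $\sqrt{|A[2]|}$ and $\sqrt{|A[3]|}$ are harmless; the cleanest route is to bound all the lower-order pieces crudely by a small multiple of $p|A|$ as well. The target error $\frac{|A|(p^2-1)}{24}\cdot\frac{36p}{p^2-1}=\frac{36p|A|}{24}=\frac{3p|A|}{2}$ is exactly designed to absorb $\tfrac32(p-1)|A|$ together with the remaining smaller terms, so the final step is to verify the arithmetic inequality that the total error is at most $\tfrac{3}{2}p|A|$, from which the stated lower bound follows directly.
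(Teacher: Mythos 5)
Your proposal is correct and follows essentially the same route as the paper: subtract the two instances of the dimension formula, drop the $M_{2-k}$ term via the embedding from Proposition~\ref{up}, insert the main terms and error bounds from Lemma~\ref{lem:qdiff} so that the leading coefficient becomes $\tfrac{k-1}{24}|A|(p^2-1)$, and absorb the total error $\tfrac{3}{2}(p-1)|A|$ plus the $O(\sqrt{|A|})$ pieces into $\tfrac{3}{2}p|A|$ using $|A[2]|\le|A|$ and the analogous trivial bound for the $|A[3]|$ term. This matches the paper's proof in both structure and arithmetic.
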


\begin{proof}
Let $U$ be any totally isotropic subgroup of $B \subset A \oplus B$. Then we have $\abs{B} = p$, $U^\perp = A \oplus U$ and $A \cong U^\perp/U$.
Therefore, Proposition~\ref{up} implies that the left hand side is non-negative.
We use the dimension formula
  \begin{align*}
    \dim(S_{k,A})&= \frac{d_A(k+12)}{12}-\alpha_1(A)-\alpha_2(A)-\alpha_3(A)-\alpha_4(A) +\dim(M_{2-k,A(-1)}).
  \end{align*}
Because of Proposition~\ref{up}, we have $\dim(M_{2-k,(A\oplus B)(-1)})\geq \dim(M_{2-k,A(-1)})$.
Employing Lemma \ref{lem:qdiff}, we obtain
  \begin{align*}
    \dim(S_{k,A\oplus B})-\dim(S_{k,A})&\geq \frac{|A|(p^2-1)}{24}(k-1)-\frac{\sqrt{|A[2]|}}{2}-\frac{2+2\sqrt{|A[3]|}}{3\sqrt{3}}-\frac{3}{2}
    (p-1)|A|.
  \end{align*}
The claim now follows by the trivial estimates $|A[2]|\leq |A|$ and $\frac{2+2\sqrt{|A[3]|}}{3\sqrt{3}}\leq |A|$.
\end{proof}

\begin{corollary}
  \label{cor:pbounds}
  With the same assumptions as in Theorem \ref{thm:qdiff}, we have $S_{k,A \oplus B} \neq \{ 0 \}$ for $p \geq p_k$
  given in Table \ref{tab:pbounds}.
\end{corollary}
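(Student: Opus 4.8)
The plan is to read the corollary off directly from Theorem~\ref{thm:qdiff}; the entire content is to locate, for each weight, the smallest prime for which the lower bound there becomes strictly positive.

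First I would observe that, by Proposition~\ref{up} applied to a totally isotropic subgroup $U\subset B\subset A\oplus B$ with $A\cong U^{\perp}/U$, the quantity $\dim(S_{k,A\oplus B})-\dim(S_{k,A})$ is a non-negative integer. Consequently $\dim(S_{k,A\oplus B})\geq \dim(S_{k,A})\geq 0$, and to conclude $S_{k,A\oplus B}\neq\{0\}$ it suffices to show that the right-hand side of the estimate in Theorem~\ref{thm:qdiff} is strictly positive: a strictly positive lower bound for a non-negative integer forces that integer to be at least $1$.

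Since $p\nmid 6|A|$ forces $p\geq 5$, the prefactor $\frac{|A|(p^2-1)}{24}$ is positive, so positivity of the bound is equivalent to
\[
  k-1-\frac{36p}{p^2-1}>0,
  \qquad\text{i.e.}\qquad
  (k-1)p^2-36p-(k-1)>0.
\]
Next I would note that the map $p\mapsto \frac{36p}{p^2-1}$ is strictly decreasing for $p>1$, since the derivative of $\frac{p}{p^2-1}$ has numerator $-(p^2+1)<0$. Hence for fixed $k$ the inequality holds for every $p$ exceeding the positive root
\[
  p_k^{\ast}=\frac{18+\sqrt{324+(k-1)^2}}{k-1}
\]
of the quadratic $(k-1)X^2-36X-(k-1)$. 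Taking $p_k$ to be the least prime with $p_k>p_k^{\ast}$ then yields $S_{k,A\oplus B}\neq\{0\}$ for all admissible $p\geq p_k$, and the entries of Table~\ref{tab:pbounds} are produced by evaluating $p_k^{\ast}$ (equivalently, by testing $\frac{36p}{p^2-1}<k-1$ prime by prime) for each listed weight $k$.

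I expect no genuine obstacle here, as the argument is a routine consequence of Theorem~\ref{thm:qdiff}. The only points requiring a little care are the passage from a positive \emph{real} lower bound to the strict inequality $\dim(S_{k,A\oplus B})\geq 1$, which rests on the integrality and non-negativity furnished by Proposition~\ref{up}, and the monotonicity of $\frac{36p}{p^2-1}$, which guarantees that a single threshold prime $p_k$ suffices for all larger $p$ rather than having to be verified separately for each prime.
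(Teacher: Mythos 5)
Your proposal is correct and is exactly the intended derivation: the paper states Corollary~\ref{cor:pbounds} without proof, and the content is precisely to note that the lower bound in Theorem~\ref{thm:qdiff} is positive once $k-1>\frac{36p}{p^2-1}$, to use monotonicity of $p\mapsto\frac{36p}{p^2-1}$ so that a single threshold prime suffices, and to read off the least such prime for each weight (your computed thresholds match Table~\ref{tab:pbounds}, with the grouped ranges governed by the smallest $k$ in each range).
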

\begin{table}[h]
  \centering
    \caption{\label{tab:pbounds} Bounds on $p$ in Corollary \ref{cor:pbounds}.}
  \begin{tabular}{llllllllll}
    \toprule
    $k$   & $\frac{3}{2}$ & $2$ & $\frac{5}{2}$ & $3$ & $\frac{7}{2}$& $4$ & $\frac{9}{2} \leq k \leq \frac{13}{2}$& $7 \leq k \leq 9$ & $k \geq \frac{19}{2}$ \\\midrule
    $p_k$ &  73 & 37 & 29 & 19 & 17 & 13 & 11 & 7 & 5 \\\bottomrule
  \end{tabular}\vspace{3mm}
\end{table}

\section{Simple finite quadratic modules}
\label{sect:5}

If $A$ is a finite quadratic module and $k$ is an integer, we say that $A$ is $k$-simple
if $S_{k,A} = \{0\}$.
We will now develop an algorithm that allows us to easily iterate over all
finite quadratic modules starting from anisotropic ones.

For a finite quadratic module $A$ and an integer $n$, consider the finite set
of finite quadratic modules
\[
  B(A,n) = \{A' \ \mid\ A=U^\perp/U \text{ for a totally isotropic subgroup }
             U \subset A' \text{ with } \abs{U} = n \}.
\]
For simplicity, we define a subset $C(A,n) \subset B(A,n)$ using the following formal rules.
Let $p$ be an odd prime and $r \geq 0$ be an integer.
\begin{enumerate}
\item[(To)]  $1^{+1} \mapsto p^{\epsilon_p \cdot 2}$, where $\epsilon_p = 1$ if $p \equiv 1 \pmod{4}$ and $\epsilon_{p} = -1$, otherwise,
\item[(O)]   $(p^{r})^{\pm 1} \mapsto (p^{r+2})^{\pm 1}$.
\end{enumerate}
A rule can be applied to $A$ if the module on the left hand side of the rule
is a direct summand of $A$. It is important to note that we can always apply the rule
starting with the trivial module $1^{+1}$. We should also remark that if $r=0$ above,
the left hand side of the rule $(O)$ is the trivial module. Therefore,
for $r=0$, we have the rules $1^{+1} \mapsto (p^{2})^{+1}$ and $1^{+1} \mapsto (p^{2})^{-1}$.
The application of any of these rules to the genus symbol of $A$ yields the genus symbol of
a finite quadratic module in $B(A,p)$.

\begin{example}
  Consider the finite quadratic module $A$ given by the genus symbol $3^{+1}9^{-1}7^{-3}$.
  Applying rule (To) to $A$ for $p=3$, we obtain $3^{-3}9^{-1}7^{-3}$.
  Note that we can also apply rule (O) for both signs for $p=7$ by writing
  $3^{+1}9^{-1}7^{-3} = 3^{+1}9^{-1}7^{+2}7^{-1} \mapsto 3^{+1}9^{-1}7^{+2}343^{-1}$
  and similarly $3^{+1}9^{-1}7^{-3} = 3^{+1}9^{-1}7^{-2}7^{+1} \mapsto 3^{+1}9^{-1}7^{-2}343^{+1}$.
  By applying both rules once in all possible cases, we obtain
  \begin{align*}
    C(A,3) &= \{ 3^{-3}9^{-1}7^{-3}, 3^{+1}9^{-2}7^{-3}, 3^{+1}9^{+2}7^{-3}, 9^{-1}27^{+1}7^{-3},
                 3^{+1}81^{-1}7^{-3}
              \}\\
    C(A,7) &= \{ 3^{+1}9^{-1}7^{+5}, 3^{+1}9^{-1}7^{-3}49^{+1}, 3^{+1}9^{-1}7^{-3}49^{-1},
                 3^{+1}9^{-1}7^{-2}343^{+1}, 3^{+1}9^{-1}7^{+2}343^{-1} \}, \\
    C(A,p) &= \{ 3^{+1}9^{-1}7^{-3}p^{\epsilon_p \cdot 2}, 3^{+1}9^{-1}7^{-3}(p^{2})^{+1}, 3^{+1}9^{-1}7^{-3}(p^{2})^{-1}  \}
              \text{ for } p \not\in \{3, 7 \}.
  \end{align*}
\end{example}

For $p = 2$, the rules we require are more complicated.
Let $q = 2^{r}$ with $r \geq 1$.
\begin{enumerate}
\item[(Te1)]  $1^{+1} \mapsto 2^{+2}$,
\item[(Te2)]  $1^{+1} \mapsto 2_{0}^{+2}$,
\item[(E1)]   $q^{+2} \mapsto (2q)^{+2}$,
\item[(E2)]   $q_{4}^{-2} \mapsto (2q)^{-2}$,
\item[(E3)]   $q_{t}^{\pm 1} \mapsto (4q)_{t}^{\pm 1}$,
\item[(E4)]   $2^{+2} \mapsto 4_{0}^{+2}$,
\item[(E5)]   $2^{-2} \mapsto 4_{4}^{-2}$,
\item[(E6)]   $2_{2t}^{+2} \mapsto 4_{2t}^{+2}$ for $t \in \{1,7\}$,
\item[(E7)]   $2_{2t}^{+2} \mapsto 4_{2t}^{-2}$ for $t \in \{1,7\}$.
\end{enumerate}

\begin{remark}
  Note that $2_{0}^{+2} \cong 2_{4}^{-2}$ and therefore rule (E2) applies to $2_0^{+2}$, as well.
\end{remark}

\begin{definition}
  We define $C(A,p)$ to be the set of finite quadratic modules obtained from $A$
  after application of a single rule as listed above, only involving operations for $p$.
  For a prime power $n = p^{r}$, we define $C(A,p^{r})$ to be the set that is obtained
  from $r$ consecutive applications of rules only involving $p$.
  Finally, we define $C(A,n)$ for any positive integer $n$ by induction
  on the number of different primes dividing $n$ by putting
  \[
    C(A,p^{r}m) = \bigcup_{B \in C(A,m)} C(B,p^{r})
  \]
  for $(m,p)=1$.
\end{definition}

We use these formal rules because it is very easy to implement them on a computer.

\begin{theorem}\label{thm:rulesall}
  Let $A$ be a finite quadratic module and let $A_0$ be its anisotropic reduction.
  Then $A$ can be obtained from $A_0$ in finitely many steps
  using the rules given above. More precisely, we have $A \in C(A_0,n)$ for
  $n^2 = \abs{A}/\abs{A_0}$.
\end{theorem}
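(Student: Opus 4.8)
The plan is to reduce to a single prime and then argue by downward induction on the order. First I would exploit that both the construction $C(A,n)$ and the anisotropic reduction are multiplicative over the primes: the rules attached to a prime $p$ only alter the $p$-component, the inductive definition $C(A,p^r m)=\bigcup_{B\in C(A,m)}C(B,p^r)$ splits $n$ into prime powers, and $A_0$ is the orthogonal sum of the anisotropic reductions $(A_0)_p$ of the $p$-components $A_p$. Hence it suffices to prove, for each prime $p$, that $A_p\in C((A_0)_p,p^{a_p})$ where $p^{a_p}=\sqrt{|A_p|/|(A_0)_p|}$. Reassembling over all $p$ then gives $A\in C(A_0,n)$ with $n=\prod_p p^{a_p}$, and the exponent identity $n^2=|A|/|A_0|$ is automatic, because every one of the rules multiplies the order of the affected $p$-component by exactly $p^2$.

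Fixing $p$, I would prove the $p$-local statement by induction on $|A_p|$. The base case is $A_p$ anisotropic, where $A_p=(A_0)_p$ and there is nothing to do. For the inductive step the key claim is: \emph{every non-anisotropic $p$-component $A_p$ is isomorphic to the image, under one of the listed rules, of some module $A'$ of order $|A_p|/p^2$.} Granting this, $A_p\in C(A',p)\subseteq B(A',p)$, so by definition $A'=U^{\perp}/U$ for a totally isotropic $U\cong\Z/p\Z$ inside $A_p$; by the uniqueness of the anisotropic reduction, $A'$ then has the same anisotropic reduction $(A_0)_p$ as $A_p$. The induction hypothesis gives $A'\in C((A_0)_p,p^{a_p-1})$, and applying the relevant rule yields $A_p\in C(A',p)\subseteq C((A_0)_p,p^{a_p})$.

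For odd $p$ the key claim is quick, using the uniqueness of the Jordan decomposition (Proposition~\ref{prop:genus-symbol-isom}) together with Lemma~\ref{lem:isomodd}. Writing $A_p=\bigoplus_{r\geq 1}(p^r)^{\eta_r n_r}$, I would distinguish cases by the highest level at which $A_p$ fails to be anisotropic. If some component sits at a level $p^r$ with $r\geq 2$, then peeling off a single rank-one summand $(p^r)^{\pm 1}$ exhibits $A_p$ as the output of rule (O) applied to the module obtained by replacing that summand with $(p^{r-2})^{\pm 1}$ (the $r=2$ case coming from $1^{+1}$). If instead all components lie at level $p$, so $A_p=(p)^{\eta m}$, then $A_p$ is isotropic exactly when $m\geq 3$, or $m=2$ with the hyperbolic sign $\eta=\epsilon_p$; in either case Lemma~\ref{lem:isomodd} splits off one copy of the hyperbolic plane $p^{\epsilon_p\cdot 2}$ and exhibits $A_p$ as the output of rule (To). The remaining forms at level $p$ with $m\leq 2$ and the anisotropic sign are precisely the base cases of Table~\ref{tab:anis}.

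The main obstacle is the prime $p=2$, where the Jordan decomposition and the genus symbol are no longer unique (cf.\ the Remark following Proposition~\ref{prop:genus-symbol-isom}) and where the anisotropic forms of Table~\ref{tab:anis} and the rules (Te1)--(E7) form a much larger zoo. Here I would carry out the same strip-off argument, organized by the $2$-adic level and by the oddity data, and I would repeatedly invoke the isomorphism relations of Lemma~\ref{lem:isomeven} --- in particular $\calB_q\oplus\calB_q\cong\calC_q\oplus\calC_q$ and the identities trading $\calB_q,\calC_q$ against triples of $\calA_q$'s --- first to move $A_2$ to a representative symbol on which the reverse of one of (E1)--(E7) or (Te1)/(Te2) is visible, and then, if needed, to verify via Gauss sums (Theorem~\ref{thm:disc-iso}) that the stripped module really is the claimed $A'$. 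Checking that (E3) for raising an odd-type block, (E1)/(E2) for raising an even-type block, and (E4)--(E7)/(Te1)/(Te2) for producing the level-$2$ and level-$4$ pieces together cover every possible top piece, with all signs and oddities matching, is the genuinely laborious part; the bound $t\leq 3$ on the $2$-adic level of an anisotropic form (Lemma~\ref{lem:anisoprim}) is what guarantees that the induction actually terminates at an allowed base case.
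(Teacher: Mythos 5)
Your proposal is correct and takes essentially the same route as the paper: reduce to $p$-components, then strip off or build up Jordan constituents one rule at a time, using Lemma~\ref{lem:isomodd} for odd $p$ and the relations of Lemma~\ref{lem:isomeven} for $p=2$; your downward-induction packaging (peel one rule, invoke uniqueness of the anisotropic reduction to see the quotient has the same $A_0$) is just a cleaner organization of the paper's explicit bottom-up chains, and your exponent bookkeeping is right since every rule multiplies the order by $p^2$. The only place you stop short is the $2$-adic case analysis, which you leave as a plan, but the tools you name are exactly those the paper uses and its own treatment of that case is comparably terse.
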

\begin{proof}
  It is enough to prove the claim for a $p$-module, that is a finite quadratic module
  of prime-power order $p^{n}$. Let us first assume that $p$ is odd and that $A$ has a genus symbol of the form
  $q^{\pm n}$ with $q = p^{r}$. Then it is easy to see that if $r$ is even, we can obtain the
  symbol $q^{\pm 1}$ starting from the trivial finite quadratic module
  \[
    1^{+1} \mapsto (p^{2})^{\pm 1} \mapsto \ldots \mapsto (p^{r})^{\pm 1}.
  \]
  Applying the same rule $n$ times, we obtain the symbol $q^{\pm n}$.
  If $r$ is odd instead, we start with the anisotropic symbol $p^{\pm 1}$.
  We obtain
  \[
    p^{\pm 1} \mapsto (p^{3})^{\pm 1} \mapsto \ldots \mapsto (p^{r})^{\pm 1}.
  \]
  We have now seen that we can obtain any finite quadratic $p$-module from a symbol
  of the form $p^{\pm n}$. Applying rule (To) several times
  reduces this symbol either to the trivial module or to the anisotropic
  finite quadratic module $p^{-\eps_{p}\cdot 2}$.

  For $p = 2$ we have to distinguish a few more cases.
  Suppose we are given a symbol of the form $q_{t}^{\pm r}$.
  We can obtain $q_{t}^{\pm r}$ from a symbol that is a direct
  sum of symbols of the form $2_{s}^{\pm r'}$ or $4_{s}^{\pm r'}$ by applying rule (E3).
  Using rules (E4-E7), any even number of odd summands of level $8$
  can be reduced to level $4$, leaving possibly a rank one odd component of level $8$.

  Now let $2_{t_{1}}^{+1} \ldots 2_{t_{r}}^{+1}$ be any odd discriminant form
  of level $4$ with $t_{1},\ldots,t_{r} \in \{1,7\}$.
  If $\{1,7\} \subset \{t_1,\ldots,t_{r}\}$, then $2_{0}^{+2} = 2_{1}^{+1}2_{7}^{+1}$
  is a summand.
  Now suppose that the rank is at least equal to four and the symbol does not
  contain both, $2_{1}^{+1}$ \emph{and} $2_{3}^{-1}$. Then $2_{4}^{+4}$ is a direct summand of $A$.
  However, $2_{4}^{+4} \cong 2^{-2}2_{4}^{-2} \cong 2^{-2}2_{0}^{+2}$.
  Then, we can apply (Te2) to reduce the rank of the level $4$ part to at most $3$.

  Finally, if we are given an even $2$-adic symbol $(2^{n})^{\pm r}$
  which is not anisotropic (i.e. is not $2^{-2}$), it always contains $(2^{n})^{+2}$
  or $(2^{n})^{-2}$ as a direct summand.
  Therefore, using rules (E1), (E2) and (E5), we can reduce to the case of level $2$ or $1$.
  Combining this with the strategy for the odd symbols gives the result.
\end{proof}

We now describe the algorithm used to compute all simple lattices.
\begin{algorithm}\label{algo}
  Given integers $r, s$ and a half-integer $k$, the following algorithm determines the
  isomorphism classes of all $k$-simple finite quadratic modules of signature $s$
  with a minimal number of generators less than or equal to $r$.
  \begin{enumerate}
  \item[(A)] Compute all anisotropic $k$-simple finite quadratic modules satisfying the conditions
             (see Table \ref{simpleanisotable}).
  \item[(B)] For each previously computed $k$-simple finite quadratic module $A$ compute the set $C(A,p)$
             for all primes $p \leq p_k$ with $p_k$ given in Table \ref{tab:pbounds}.
  \item[(C)] Repeat step (B) until no further $k$-simple finite quadratic modules have been found.
  \end{enumerate}
\end{algorithm}

The correctness of the algorithm follows from Theorem \ref{thm:rulesall} and Proposition \ref{up}
together with the results from the last sections. Moreover, that the algorithm terminates follows from Corollary \ref{cor:fin}.

\begin{remark}
  In each iteration, the bound on the primes can be reduced to
  the maximal prime such that there is a newly discovered finite quadratic module
  $A'$ in $C(A,p)$ for some $k$-simple finite quadratic module obtained one iteration earlier.
\end{remark}

The algorithm can be nicely illustrated in a graph. 
Figure \ref{fig:n=4} shows the output of the algorithm with the parameters corresponding to signature $(2,4)$.
In the graph, the $3$-simple finite quadratic modules have a red frame, the remaining ones a dotted green frame.
An edge from $A$ to some $B$ above $A$ indicates that $B$ is contained in $C(A,p)$ for a prime $p$. 
The color of the edge is the same as that of $A$. If an edge is green (and dotted), then the two modules connected by the edge
are both non-simple (and therefore colored green in the graph). Note that the graph does not contain all green edges for simplicity.
It contains at most one green ``incoming'' edge per vertex.

\begin{figure}
    \includegraphics[height=22cm]{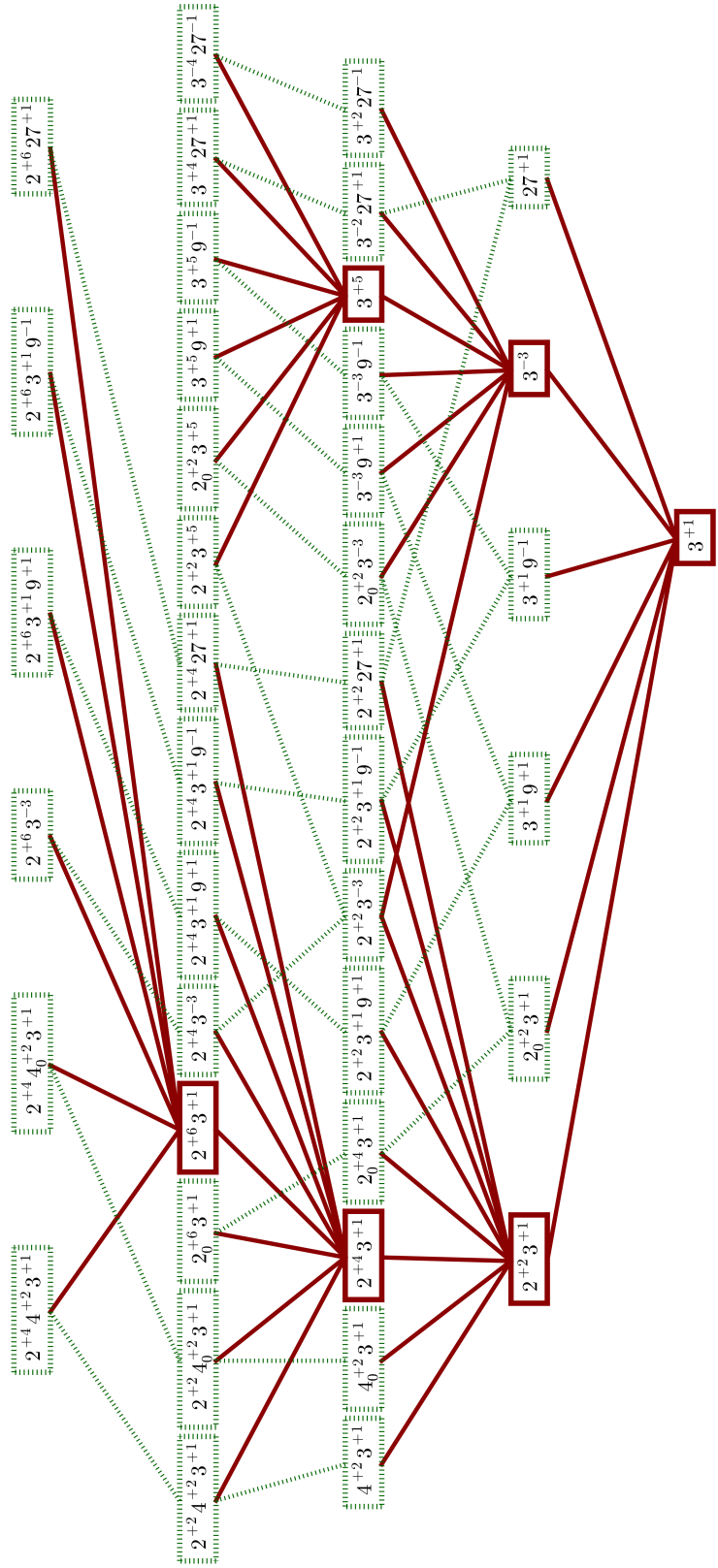}
  \caption{\label{fig:n=4}The graph illustrates Algorithm \ref{algo} for $k=3, r=6$ and $s=6$.}
  \end{figure}

In Tables \ref{tab:simple1}-\ref{tab:simple2}, we list all $\tfrac{2-n}{2}$-simple finite quadratic modules
of signature $2-n$ with minimal number of generators $r \leq 2+n$ for $n \geq 2$.

For $n = 1$, a large family of $\tfrac{3}{2}$-simple finite quadratic modules is given by the cyclic
finite quadratic modules $A_0(N) = (\Z/2N\Z,x^2/4N)$ for $N \in \Z_{>0}$. The corresponding orthogonal modular
varieties are the modular curves $\Gamma_0(N) \bs \h^*$,
where $\h^*=\h\cup \P^1(\Q)$. Moreover, each of these finite quadratic modules
has a global realization given by $L_{N} = \Z^3$ with quadratic form $Nx_1^2+x_2x_3$.
We have that $L_N$ is simple if and only if $1 \leq N \leq 36$ or if $N$ is in the following list:
\begin{gather*}
  38, 39, 40, 41, 42, 44, 45, 46, 47, 48, 49, 50, 51, 52, 54, 55, 56, 59, 60, 62, 63,\\
  64, 66, 68, 69, 70, 71, 72, 75, 76, 78, 80, 81, 84, 87, 90, 94, 95, 96, 98,\\
  100, 104, 105, 108, 110, 119, 120, 126, 132, 140, 144, 150, 168, 180.
\end{gather*} \label{Nlist}
The remaining finite quadratic modules for $n=1$ are included in \cite{CodeRepo}.

It is interesting to observe that for $N$ squarefree,
$L_N$ is simple if and only if $X_{0}^{\ast}(N) = \Gamma_0^{\ast}(N)\bs\h^*$ has genus zero.
Here, $\Gamma_{0}^{\ast}(N)$ is the extension of $\Gamma_{0}(N)$ by all Atkin-Lehner involutions.
The situation for non-squarefree $N$ is more complicated.
It would be interesting to find a similar geometric interpretation in the general case.

\begin{table}
\centering
\caption{The table shows the $70$ finite quadratic modules $A$ of signature $0$
  with minimal number of generators $r \leq 4$ and $S_{2,A} = \{0\}$. \label{tab:simple1}}
\begin{tabularx}{\linewidth}{lp{0.55\linewidth}|lp{0.45\linewidth}}
  \toprule level  & genus symbols & level & genus symbols\\\midrule
$1$ & $1^{+1}$ & $18$ & $2^{+2}9^{+1}$, $2^{+2}9^{-1}$, $2^{+4}9^{+1}$ \\
$2$ & $2^{+2}$, $2^{+4}$ & $20$ & $2_0^{+2}5^{-1}$ \\
$3$ & $3^{-2}$, $3^{+4}$ & $21$ & $3^{-1}7^{-1}$ \\
$4$ & $2_0^{+2}$, $4^{-2}$, $4^{+2}$, $2_0^{+4}$, $2^{+2}4^{+2}$, $[2^{+2}4^{-2}]$, $2_0^{+2}4^{+2}$, $[4^{-4}]$, $4^{+4}$ & $24$ & $2_1^{+1}4_1^{+1}3^{+1}$, $4_2^{-2}3^{+1}$\\
$5$ & $5^{-1}$, $5^{+2}$, $5^{-3}$, $5^{+4}$ & $25$ & $25^{+1}$, $25^{-1}$\\
$6$ & $2^{+2}3^{-2}$, $2^{+4}3^{-2}$, $2^{+2}3^{+4}$ & $27$ & $3^{+1}27^{-1}$ \\
$7$ & $7^{-2}$ & $28$ & $2_6^{+2}7^{+1}$\\
$8$ & $2_1^{+1}4_7^{+1}$, $4_0^{+2}$, $2_1^{+3}4_7^{+1}$, $2^{+2}4_0^{+2}$, $8^{+2}$, $2^{+2}8^{+2}$ & $32$ & $4_7^{+1}16_1^{+1}$\\
$9$ & $9^{+1}$, $9^{-1}$, $3^{-2}9^{-1}$, $9^{-2}$ & $33$ & $3^{+1}11^{-1}$\\
$10$ & $2^{-2}5^{+1}$, $2^{+2}5^{-1}$, $[2^{+4}5^{-1}]$, $2^{-4}5^{+1}$, $2^{+2}5^{+2}$ & $36$ & $2_0^{+2}9^{-1}$\\
$12$ & $2_2^{+2}3^{+1}$, $2_6^{+2}3^{-1}$, $2_0^{+2}3^{-2}$, $2_6^{+4}3^{-1}$, $2_6^{+2}3^{+3}$, $4^{+2}3^{-2}$ & $45$ & $9^{+1}5^{-1}$ \\
$13$ & $13^{-1}$ & $48$ & $2_3^{-1}8_3^{-1}3^{-1}$\\
$16$ & $2_7^{+1}8_1^{+1}$, $2_5^{-1}8_3^{-1}$, $4_7^{+1}8_1^{+1}$, $2_7^{+3}8_1^{+1}$, $8_0^{+2}$, $2_7^{+1}4^{+2}8_1^{+1}$ & $49$ & $49^{+1}$ \\
$17$ & $17^{+1}$ & $60$ & $2_6^{+2}3^{+1}5^{+1}$\\
     &          & $64$ & $2_7^{+1}32_1^{+1}$\\
\bottomrule
\end{tabularx}
\end{table}
\begin{table}[h!]
  \centering
  \caption{The table shows all finite quadratic modules of signature $2-n$ modulo $8$ with minimal number of generators
    $r \leq 2+n$ such that $S_{\frac{2+n}{2}} = \{0\}$\label{tab:simple2}}
  \begin{tabularx}{\linewidth}{lX}
    \toprule $n$ & \textbf{level}: genus symbols\\\midrule
    $3$  & $\mathbf{4}$ : $2_7^{+1}$, $2_7^{+3}$, $2_7^{+1}4^{+2}$, $2_7^{+5}$, $2_7^{+3}4^{+2}$, $2_7^{+1}4^{+4}$\\
         & $\mathbf{8}$  : $4_7^{+1}$, $2^{+2}4_7^{+1}$, $2^{+4}4_7^{+1}$, $\quad$
           $\mathbf{12}$ : $2_1^{+1}3^{+1}$, $2_7^{+1}3^{-2}$, $2_7^{+1}3^{+4}$,\\
         & $\mathbf{16}$ : $8_3^{-1}$, $8_7^{+1}$, $2^{+2}8_3^{-1}$, $[2^{+4}8_3^{-1}]$\\
    $4$  & $\mathbf{3}$ : $3^{+1}$, $3^{-3}$, $3^{+5}$, $\quad$
           $\mathbf{6}$ : $2^{+4}3^{+1}$, $[2^{+6}3^{+1}]$, $2^{+2}3^{+1}$\\
    $5$  & $\mathbf{8}$ : $4_5^{-1}$, $2^{+2}4_5^{-1}$, $2^{+4}4_5^{-1}$, $[2^{+6}4_5^{-1}]$\\
    $6$  & $\mathbf{2}$ : $2^{-2}, 2^{-4}, 2^{-6}$, $[2^{-8}]$, $\quad$
           $\mathbf{5}$ : $5^{+1}$\\
    $7$  & $\mathbf{8}$ : $4_3^{-1}$, $\quad$
         $\mathbf{12}$ : $2_1^{+1}3^{-1}$\\
    $8$  & $\mathbf{3}$ : $3^{-1}$, $\quad$
         $\mathbf{4}$ : $2_2^{+2}$, $\quad$
         $\mathbf{7}$ : $7^{+1}$\\
    $9$  & $\mathbf{4}$ : $2_1^{+1}$, $\quad$
         $\mathbf{8}$ : $4_1^{+1}$, $\quad$
         $\mathbf{16}$ : $8_1^{+1}$\\
    $10$ & $\mathbf{1}$ : $1^{+1}$, $\quad$
           $\mathbf{2}$ : $2^{+2}$\\
    $18,26$ & $\mathbf{1}$ : $1^{+1}$\\\bottomrule
  \end{tabularx}
\end{table}

\section{Simple lattices}
\label{sec:simple-lattices}

Let $L$ be an even lattice of signature $(2,n)$. We will say that $L$
is \emph{simple} if $S_{k,L'/L} = \{ 0 \}$ for $k = \frac{2+n}{2}$.
In this section we determine all isomorphism classes of simple lattices of signature $(2,n)$.
If $L$ is simple, then the finite quadratic module $L'/L$ is $k$-simple for $k = \frac{2+n}{2}$.
Thus, we are interested in all $k$-simple finite quadratic modules
with minimal number of generators $r \leq 2+n$ that actually correspond to a lattice of signature $(2,n)$.

\begin{proposition}
  \label{prop:glob}
  Let $A$ be a finite quadratic module and
  write $\eps_q$ for the sign of the Jordan component of $A$ of order $q$.
  Let $r_p$ be the minimal number of generators of $A_{p}$.
  There is an even lattice $L$ of signature $(r,s)$ with $L'/L = A$ if and only if all of the following conditions
  hold.
  \begin{enumerate}
  \item We have $\sig(A) \equiv r-s \pmod{8}$.
  \item For all primes $p$, we have $r+s \geq r_{p}$.
  \item For all odd primes $p$ with $r+s = r_p$, write $(-1)^s\abs{A} = p^{\alpha}a_p$ with $(a_p,p) = 1$.
     Then we have
    \begin{equation}
      \label{eq:signs}
      \prod_{q} \eps_q = \leg{a}{p},
    \end{equation}
  where the product runs over all powers $q$ of $p$.
  \item If $r+s = r_2$ and $A_2$ does not contain an odd direct summand of the form $2_{t}^{\pm m}$ with $m \geq 1$,
    then \eqref{eq:signs} holds for $p=2$ and $(-1)^s\abs{A} = 2^{\alpha}a$ with $(a,2) = 1$, as well.
  \end{enumerate}
\end{proposition}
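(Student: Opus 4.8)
The plan is to recognize this as a local--global realizability criterion for an even $\Z$-lattice with prescribed signature and discriminant form, and to reduce it to the theory of genera of lattices as developed in \cite{Ni} and \cite{CS}. The starting point is that an even lattice $L$ of signature $(r,s)$ with $L'/L\cong A$ exists if and only if, for every prime $p$, there is a nondegenerate even $\Z_p$-lattice $L_p$ of rank $r+s$ with discriminant form $A_p$, together with a real quadratic space of signature $(r,s)$, and these local data are compatible in the sense of the Hasse--Minkowski principle. I would first dispose of necessity of the two simplest conditions: condition (1) is forced by Milgram's formula, since $\sig(A)\equiv b^+-b^-\pmod 8$ for any even lattice whose discriminant module is $A$; condition (2) holds because $L_p'/L_p$ is a quotient of the free $\Z_p$-module $L_p'$ of rank $r+s$, so its minimal number of generators $r_p$ cannot exceed $r+s$.

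For sufficiency, and for the necessity of conditions (3) and (4), the key is to analyse the local determinant. Writing $\det L=(-1)^s\abs{A}\in\Q^\times/(\Q^\times)^2$, the localization $L_p$ must have determinant equal to the image of $(-1)^s\abs{A}$ in $\Q_p^\times/(\Q_p^\times)^2$. Here I would split into two regimes. When $r+s>r_p$, a $\Z_p$-lattice with discriminant form $A_p$ can be built by adjoining a unimodular $\Z_p$-lattice of positive rank to a fixed realization of the non-unimodular Jordan blocks; for odd $p$ such a unimodular padding realizes either unit square class as its determinant, so the local determinant can always be matched and no further constraint arises, while for $p=2$ the same freedom is available precisely when $A_2$ carries an odd Jordan summand $2_t^{\pm m}$, whose free oddity absorbs any sign discrepancy. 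When instead $r+s=r_p$ there is no room for padding: the determinant of $L_p$ is rigidly determined by the signs $\eps_q$ of the Jordan components of order a power of $p$, and matching it to the $p$-adic square class of $(-1)^s\abs{A}$ is exactly the reciprocity condition \eqref{eq:signs}. This simultaneously explains why (3) and (4) are both necessary and sufficient in the tight case, the caveat in (4) reflecting that a purely even $2$-adic form is rigid whereas an odd one is flexible.

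The main obstacle will be the gluing step. Once local lattices with matching determinants exist at every prime and the signature $(r,s)$ is fixed, one still needs the product formula for Hilbert symbols---equivalently, the vanishing of the total Hasse--Witt obstruction---to produce a global quadratic space over $\Q$, and then to descend from $\Q$ to an integral even lattice lying in the prescribed genus. I would handle this by invoking the oddity formula of \cite{CS}, which packages Milgram's signature congruence together with Hilbert reciprocity into a single relation across all places including the archimedean one; condition (1) guarantees that this relation is consistent at $\infty$, so that the product of the local Hasse invariants is automatically $+1$ and a rational quadratic space with the required invariants exists. The final passage to an even integral lattice realizing each prescribed $A_p$ then follows from the existence theorem for lattices with given invariants (\cite[Ch.~15]{CS}, \cite[Thm.~1.10.1]{Ni}), which yields both directions at once. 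The remaining work is bookkeeping: translating the square-class and determinant-sign data into the genus-symbol quantities $\eps_q$ and $\leg{\cdot}{p}$, which is routine once the structural dichotomy $r+s>r_p$ versus $r+s=r_p$ is in place.
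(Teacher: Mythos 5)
Your proposal is correct and in substance coincides with the paper's own proof, which consists of a single citation of Nikulin's Theorem 1.10.1: your analysis (Milgram for condition (1), the generator count for (2), the rigid-versus-padded local determinant dichotomy for (3) and (4), and the gluing via the oddity formula) is exactly the standard proof of that theorem, and you close by invoking the same existence result. Nothing further is needed.
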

\begin{proof}
  See \cite{Ni}, Theorem 1.10.1.
\end{proof}

Using Proposition \ref{prop:glob}, we determined all genus symbols
that do not correspond to lattices of signature $(2,n)$.
We enclosed them in parentheses $[ \cdot ]$ in Tables \ref{tab:simple1}--\ref{tab:simple2}.
For instance, the module $M=2^{+2}4^{-2}$ in Table \ref{tab:simple1}
does not correspond to any lattice of signature $(2,2)$ because the product of the signs
is equal to $-1$ but $a=1$ and $M$ does not contain an odd component of level $4$.
It does, however, correspond to a (non-simple) lattice of signature $(2,2+8m)$ for all $m\geq 1$.

Recall that the genus symbol does only determine a lattice up to rational equivalence.
A genus consists of finitely many integral isometry classes of lattices.
However, the following proposition gives a full classification of all isomorphism classes of
lattices of signature $(2,n)$ for $n \geq 1$.

\begin{proposition}
  If $L$ is a simple lattice of signature $(2,n)$ and $L$ is not contained in the genus
  $2_1^{+1}5^{-1}25^{-1}$, then its genus contains a unique isomorphism class.
  The genus $2_1^{+1}5^{-1}25^{-1}$ contains two isomorphism classes.
\end{proposition}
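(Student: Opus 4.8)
The plan is to reduce the enumeration of isometry classes inside each of the finitely many genera to a count of spinor genera, and then to settle the latter by a purely local computation. Since $L$ has signature $(2,n)$ with $n\ge 1$, the rational quadratic space $V=L\otimes_\Z\Q$ is indefinite of dimension $n+2\ge 3$, so $\Spin(V)$ has strong approximation and Eichler's theorem applies (see \cite{CS} and the references given there): every proper spinor genus in the genus of $L$ consists of a single proper isometry class. Hence the number of proper isometry classes in the genus equals its number of proper spinor genera. For the genera that turn out to carry a single class this already gives the assertion; for the one exceptional genus the rank $n+2$ is odd, so $-\id$ is an improper automorphism and proper and improper isometry classes coincide there as well. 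It therefore suffices to count proper spinor genera.

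Next I would use the adelic description of this count. Writing $\theta$ for the spinor norm on the rotation group $\Orth^{+}$, the number of proper spinor genera equals
\[
  [\,\mathbb{A}^\times : \Q^\times\, J_L\,],\qquad J_L=\theta\bigl(\Orth^{+}(V_\infty)\bigr)\prod_p \theta\bigl(\Orth^{+}(L_p)\bigr),
\]
which is a power of $2$. Two reductions make this finite and local. First, since $V$ is indefinite the archimedean factor is all of $\R^\times$. Second, at every prime $p\nmid 2\det L$ the completion $L_p$ is unimodular of rank $\ge 3$, so $\theta(\Orth^{+}(L_p))=\Z_p^\times$. Thus the index is governed entirely by the finitely many bad primes, and for each genus it can be read off from the Jordan decompositions of the $L_p$ via the standard rules for computing local spinor-norm groups (as tabulated in \cite{CS}). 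I would automate this computation alongside the genus-symbol computations of the previous sections.

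Carrying this out, for every genus in Tables \ref{tab:simple1}--\ref{tab:simple2} except $2_1^{+1}5^{-1}25^{-1}$ one finds $\Q^\times J_L=\mathbb{A}^\times$, so the index is $1$ and the genus is a single isometry class. The genus $2_1^{+1}5^{-1}25^{-1}$ has signature $1$, hence $n=1$ and rank $3$, and the decisive contributions come from $p=5$ and $p=2$. The lattice $L_5$ is an orthogonal sum of three rank-one Jordan constituents of scales $1,5,25$; combining the signs recorded by $5^{-1}$ and $25^{-1}$ with the value of $\det L$ at $5$ shows that the three unit discriminants multiply to a square, so the only rotations are even sign changes and $\theta(\Orth^{+}(L_5))=\{1,5\}\pmod{(\Q_5^\times)^2}$ omits the nontrivial unit class. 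At $p=2$ the unimodular part of $L_2$ is the anisotropic even unimodular plane, so every dyadic symmetry has unit spinor norm and $\theta(\Orth^{+}(L_2))$ contains no uniformizer. Consequently the quadratic character attached to $\Q(\sqrt{5})$ -- trivial at $\infty$ (as $5>0$), unramified at $2$ and away from $5$, and with $5$ a local norm at $5$ -- is trivial on every factor of $J_L$ but nontrivial on $\mathbb{A}^\times/\Q^\times$. A short check (the competing characters for $\Q(\sqrt{2})$ and $\Q(\sqrt{10})$ are nontrivial on $\theta(\Orth^{+}(L_2))$, since that group contains a class $\equiv 3\pmod 8$) shows it is the only such character, so the index is exactly $2$ and the genus splits into two spinor genera, hence two isometry classes.

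The main obstacle is the local spinor-norm bookkeeping, above all the dyadic part, where the generation of $\theta(\Orth^{+}(L_2))$ is delicate and must be performed for the actual even $2$-adic lattice rather than merely its discriminant form; the decisive point is to verify, uniformly across the whole finite list, that $\Q^\times$ together with the local groups fills $\mathbb{A}^\times$ in every case but the single exception, and to isolate the surviving $\Q(\sqrt{5})$-character there. I would cross-check the outcome against the mass formula and against a direct machine enumeration of the reduced forms in the suspect genera (cf.\ \cite{CodeRepo}).
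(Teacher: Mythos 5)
Your proposal is correct in substance but executes the argument along a genuinely different route from the paper. Both proofs ultimately rest on Eichler's theorem that for an indefinite lattice of rank $\geq 3$ the (proper) classes in a genus are in bijection with its (proper) spinor genera; the difference is in how that count is obtained. The paper never computes the idele index $[\mathbb{A}^\times:\Q^\times J_L]$ directly: for $n\geq 2$ it invokes the determinant-divisibility criterion of \cite{CS} (Chapter~15, Corollary~22), which forces $\abs{\det L}\geq 5^6$ whenever the class number exceeds one and so eliminates every genus in the tables at a glance; for $n=1$ it uses Theorems~19 and~21 of the same chapter to whittle the candidates down to four ternary genera, settles three of them by the local spinor-norm results of Earnest--Hsia, and obtains the count of two for $2_1^{+1}5^{-1}25^{-1}$ by citing Watson's explicit classification (which also yields the two ternary forms $x_1^2+x_1x_2-x_2^2+25x_3^2$ and $5(x_1^2+x_1x_2-x_2^2)+x_3^2$ as representatives). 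Your plan instead computes $\theta(\Orth^{+}(L_p))$ at the bad primes for every genus and identifies the surviving quadratic character for the exceptional one. This buys uniformity and self-containedness, and it explains \emph{why} the exception occurs (the $\Q(\sqrt{5})$-character survives), at the cost of a much larger computation and of the delicate dyadic bookkeeping you rightly flag: the two thinnest points in your sketch are the claim that the unimodular Jordan constituent of $L_2$ is the anisotropic even plane (this follows from $\det L=-250$ and the oddity $1$ of $2_1^{+1}$, but needs to be said) and the assertion that the three unit discriminants at $p=5$ combine so that $\theta(\Orth^{+}(L_5))$ is exactly the norm group of $\Q_5(\sqrt{5})$ rather than $\{1,5u\}$ for a non-square unit $u$ --- the conclusion depends on the actual signs in $5^{-1}25^{-1}$ and the residue of $\det L/5^{3}$, not merely on rank-one-ness of the constituents. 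Both points check out, and your proposed cross-checks (mass formula, direct enumeration) would catch any slip, so the approach is sound; the paper's citation-based route is simply far shorter.
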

\begin{proof}
  For $n \geq 2$,
  Corollary 22 in Chapter 15 of \cite{CS} states that
  if there is more than one class in the genus of an indefinite lattice $L$,
  then $\abs{\det(L)} \geq 5^{6}$. There is no finite quadratic module of this size in our list
  in Tables \ref{tab:simple1}-\ref{tab:simple2}.

  The lattices in signature $(2,1)$ are slightly more complicated to treat.
  Using Theorem 21 in Chapter 15 of \cite{CS}, we find that only the lattices of discriminant $d$
  with $4d$ divisible by $5^{3}$ or $8^{3}$ might contain more than one class
  in their genus. Theorem 19 ibid. finally leaves us with the following list
  of genera that might contain more than one class:
  \begin{equation*}
    2_7^{+1} 4_1^{+1} 32_1^{+1}, \quad 2_1^{+1} 4_7^{+1} 16_1^{+1}, \quad
    2_7^{+1} 8_1^{+1} 16_1^{+1}, \quad 2_1^{+1}5^{-1}25^{-1}.
  \end{equation*}
  For the first $3$ genera, we can use \cite{EarnestHsia}, Theorem 2.2, to see
  that these also only contain one class.
  The last one is treated in \cite{Watson} in Chapter 7, Section 5.
  This genus contains two isomorphism classes represented by the integral ternary quadratic forms
  \begin{align*}
    \phi_1 (x_1, x_2, x_3) &= x_1^2+x_1x_2-x_2^2+25x_3^2,\\
    \phi_2 (x_1, x_2, x_3) &= 5(x_1^2+x_1x_2-x_2^2) + x_3^2.
  \end{align*}
\end{proof}

\subsection{Applications to Borcherds products}
\label{sect:bps}

Let $(L,Q)$ be an even lattice of signature $(2,n)$ and let $A=L'/L$ be its the discriminant module.
We write $\Orth(L)$ for the orthogonal group of $L$ and  $\Orth(L)^+$ for the subgroup of index $2$
consisting of those elements whose determinant has the same sign as the spinor norm.
We consider the kernel $\Gamma_L$ of the natural homomorphism $ \Orth(L)^+\to \Aut(A)$, sometimes
referred to as the stable orthogonal group of $L$.

Let $D$ be the hermitian symmetric space associated to the group $\Orth(L\otimes_\Z \R)$. It can be realized as a tube domain in $\C^n$. The group $\Gamma_L$ acts on $D$ and the quotient
\[
X_L = \Gamma_L\bs D
\]
has the structure of a quasi-projective algebraic variety.
For suitable choices of $L$, important families of classical modular varieties can be obtained in this way, including Shimura curves, Hilbert modular surfaces and Siegel modular threefolds.

For every $\mu\in A$ and every negative $m\in \frac{1}{N}\Z$, there is a Heegner divisor $Z(m,\mu)$ on $X_L$ (sometimes also referred to as special divisor or rational quadratic divisor), see
e.g.~\cite{Bo2}, \cite{Br1}. We denote by $\Pic_{\mathrm{Heeg}}(X_L)$ the subgroup of the Picard group $\Pic(X_L)$ of $X_L$ generated by all such Heegner divisors.

If $L$ is simple, then for {\em every} pair $(m,\mu)$ as above there is a weakly holomorphic modular form $f_{m,\mu}\in M^!_{1-n/2,A(-1)}$ of weight $1-n/2$ whose Borcherds lift $\Psi(f_{m,\mu})$ (in the sense of Theorem 13.3 in \cite{Bo1}) is a meromorphic modular form for $\Gamma_L$
whose divisor is $Z(m,\mu)$.
In particular, the vector space $\Pic_{\mathrm{Heeg}}(X_L)\otimes_\Z\Q$ is one-dimensional and generated by the Hodge bundle, hence it is as small as it can be.

The weight of the Borcherds product
$\Psi(f_{m,\mu})$  is given by half of the constant term of the component of $f_{m,\mu}$
corresponding to the characteristic function $\phi_0$ of the zero element of $A$. Equivalently, it can be expressed in terms of the coefficient of index $(-m,\mu)$ of the unique normalized Eisenstein series $E_{1+n/2,A}\in M_{1+n/2,A}$ whose constant term is $\phi_0$, see e.g. Theorem~12 of \cite{BK}. The coefficients of such Eisenstein series can be explicitly computed, see Theorem~7 of \cite{BK} or \cite{KY}.
It would be interesting to use the list of simple lattices to search systematically for holomorphic Borcherds products of singular weight $n/2-1$ for $\Gamma_L$. Such Borcherds products are often denominator identities of generalized Kac-Moody algebras, see \cite{Sch-Inv}.

\end{document}